\newtheorem{theorem}{Theorem}[section]
\newtheorem{proposition}[theorem]{Proposition}
\newtheorem{lemma}[theorem]{Lemma}
\theoremstyle{definition}
\newtheorem{example}[theorem]{Example}
\newtheorem{remark}[theorem]{Remark}
\newcommand{\CC}{{\mathbb C}}
\newcommand{\NN}{{\mathbb N}}
\newcommand{\ZZ}{{\mathbb Z}}
\newcommand{\DD}{{\mathbb D}}
\newcommand{\RR}{{\mathbb R}}
\newcommand{\TT}{{\mathbb T}}
\newcommand{\cB}{{\mathcal B}}
\newcommand{\cC}{{\mathcal C}}
\newcommand{\cD}{{\mathcal D}}
\newcommand{\cF}{{\mathcal F}}
\newcommand{\cG}{{\mathcal G}}
\newcommand{\cH}{{\mathcal H}}
\newcommand{\cL}{{\mathcal L}}
\newcommand{\cP}{{\mathcal P}}
\newcommand{\cR}{{\mathcal R}}
\newcommand{\cS}{{\mathcal S}}
\newcommand{\cW}{{\mathcal W}}
\newcommand{\fA}{{\mathfrak A}}
\newcommand{\dom}{\operatorname{Dom}}
\newcommand{\ran}{\operatorname{Ran}}
\newcommand{\ra}{\rightarrow}
\let\phi=\varphi
\newcommand{\iac}{\mathrm{i}}
\renewcommand{\ker}{\operatorname{Null}}
\newcommand{\de}{\operatorname{d}}
\newcommand{\essinf}{\operatorname*{ess\,inf}}
\newcommand{\essran}{\operatorname{ess\,ran}}
\newcommand{\nr}[1]{\vspace{0.1ex}\noindent\hspace*{12mm}\llap{\textup{(#1)}}}
\title{Triplets of Closely Embedded Hilbert Spaces}\thanks{The first named author
acknowledges financial support from  the Polish Ministry of Science and
Higher Education: 11.11.420.04 and Grant NN201 546438 (2010-2013).}
\thanks{The second named
author acknowledges financial support from the 
grant of the Romanian 
National Authority for Scientific Research, CNCS Ð UEFISCDI, project number
PN-II-ID-PCE-2011-3-0119.}
\author{Petru Cojuhari}
\address{Department of Applied Mathematics, AGH University of Science and
Technology,\hfill\break Al.~Mickievicza 30, 30-059 Cracow, Poland}
\email{cojuhari@uci.agh.edu.pl}
\author{Aurelian Gheondea}
\address{Department of Mathematics, Bilkent University, 06800 Bilkent, Ankara,
  Turkey, \emph{and} Institutul de Matematic\u a al Academiei Rom\^ane, C.P.\
  1-764, 014700 Bucure\c sti, Rom\^ania}
\email{aurelian@fen.bilkent.edu.tr \textrm{and} A.Gheondea@imar.ro}
\begin{document}

\begin{abstract}  We obtain a general concept of triplet of Hilbert spaces with 
closed (unbounded) embeddings instead of continuous (bounded) ones. 
We provide a model and an abstract theorem as well for a triplet of closely 
embedded Hilbert spaces associated to positive selfadjoint operator $H$, that 
is called the Hamiltonian of the system, which is supposed to be 
one-to-one but may not have a bounded inverse. 
Existence and uniqueness results, as well as left-right symmetry, 
for these triplets of closely embedded Hilbert spaces are obtained.
 We motivate this abstract theory by a diversity of problems 
coming from homogeneous or weighted Sobolev spaces, Hilbert spaces of 
holomorphic functions, and weighted $L^2$ spaces. 
An application to weak solutions 
for a Dirichlet problem associated to a class of degenerate elliptic partial 
differential equations is presented. In this way, we propose a 
general method of proving the existence of weak solutions that avoids 
coercivity conditions and Poincar\'e-Sobolev type inequalities.
\end{abstract}

\subjclass[2010]{47A70, 47B25, 47B34, 46E22, 46E35, 35H99, 35D30}
\keywords{Closed embedding, triplet of Hilbert spaces, rigged Hilbert spaces, kernel 
operator, Hamiltonian, degenerate elliptic operators, Dirichlet problem, weak 
solutions}
\maketitle

\section{Introduction}

The concept of rigged Hilbert space was introduced and investigated by 
I.M.~Gelfand and A.G.~Kostyuchenko \cite{GelfandKostyuchenko}, see
\cite{GelfandVilenkin} for further developments, in connection to the general 
problem of reconciliating the two basic paradigms of Quantum 
Mechanics, that of P.A.M.~Dirac based on bras 
and kets and used mainly by physicists, with that of J.~von Neumann based on 
positive selfadjoint operators in
Hilbert spaces and used mainly by mathematicians. 
This reconciliation was essentially
facilitated by the L.~Schwartz's theory of distributions \cite{Schwartz}. 
Briefly, a rigged Hilbert space
is a triplet $(\cS;\cH;\cS^*)$, in which $\cH$ is a complex Hilbert space, $\cS$ is a 
topological vector space that is continuously and densely embedded in $\cH$, 
while $\cS^*$
is the "dual space of $\cS$ with respect to $\cH$" and such that $\cH$ is 
continuously and densely embedded in $\cS^*$. The rigged Hilbert space 
formalism was later recognized and used by physicists as a powerful 
and rigorously mathematical tool for problems in quantum mechanics, 
e.g.\ see A.~Bohm and M.~Gadella \cite{BohmGadella}, R.~de~la~Madrid 
\cite{Madrid}, and the rich bibliography cited there. In particular, a theory, consistent 
both mathematically and physically, of Gamow states and of 
quantum resonances was made possible, e.g.~see the survey article of 
O.~Civitarese and M.~Gadella \cite{CivitareseGadella}.

One of the built-in deficiency of the theory of rigged Hilbert spaces consists on
the vague formalization of the meaning of "dual space of $\cS$ with respect to 
$\cH$". In this respect,
an important contribution to the theory of rigged Hilbert spaces is due to
Yu.M. Berezansky \cite{Berezanski}, \cite{Berezanskii}, and his school \cite{BerezanskyKondratiev}, 
\cite{BerezanskySheftelUs},  in which rigged Hilbert spaces are generated by 
scales of continuously embedded Hilbert spaces 
with certain properties. The basic concept in this 
approach is that of a \emph{triplet of Hilbert spaces}.
More precisely, this is denoted by 
$\cH_{+}\hookrightarrow \cH\hookrightarrow \cH_{-}$, 
where: $\cH_{+}$, $\cH_0$, and $\cH_-$ are 
Hilbert spaces, the embeddings are continuous (bounded linear operators), the 
space $\cH_+$ is dense in $\cH_0$, the space $\cH_0$ is dense in $\cH_-$, 
and the space $\cH_-$ is the conjugate dual of $\cH_+$ with respect to $\cH_0$, 
that is, $\|\phi\|_-=\sup\{ |\langle h,\phi
\rangle_{\cH}\mid \|h\|_+\leq 1\}$, for all $\phi\in \cH_0$.
Extending these triplets on 
both sides, one gets a \emph{scale of Hilbert spaces} 
that yields, by an inductive limit and, respectively, a projective limit, a 
\emph{rigged Hilbert space} $\cS\hookrightarrow \cH\hookrightarrow \cS^*$. 
In this respect, the rather vague notion of "duality through a Hilbert space" is 
made precise, as well.

In order to produce a triplet of Hilbert spaces, this method requires that
the positive selfadjoint operator which generates it, 
and that we call the  \emph{Hamiltonian} of the system, should have a 
bounded inverse. In the following we briefly describe this construction, 
following \cite{Berezanskii} 
and \cite{BerezanskySheftelUs}, but with different notation and making explicit 
a technique of operator ranges, e.g.\ see \cite{FlWl} and the rich bibliography 
cited there.
Let $H$ be a positive selfadjoint operator in a Hilbert space
$\cH$ such that $A=H^{-1}$ is a bounded operator. Then there exists 
$S\in\cB(\cH)$ such that $A=S^*S$, e.g.\ $S=A^{1/2}$ 
does the job. Note that, necessarily, $S$ has trivial kernel and dense range, but 
may not be boundedly invertible. Let $\cR(S)$ denote the range space $\ran(S)$, 
hence a dense linear manifold in $\cH$, organized as a Hilbert space with respect 
to the norm
\begin{equation}\label{e:snorm} \|f\|_S=\|u\|_\cH,\quad f=Su,\ u\in\cH.
\end{equation}
Then $\cH_+=\cR(S)$ is continuously embedded in $\cH$, let $j_+$ denote this 
embedding, and note that $j_+j_+^*=A$, the \textit{kernel operator} of 
this embedding. 

On $\cH$ one can define a new norm $\|\cdot\|_-$ by the variational 
formula
\begin{equation}\label{e:minusnorm} \|f\|_-=\sup\{\frac{|\langle f,u\rangle_\cH|}{\|u\|_+}\mid u
\in \cH_+\setminus \{0\}\},
\end{equation} and let $\cH_-$ denote the completion of $\cH$ under the norm 
$\|\cdot\|_-$. Then $\cH$ is continuously embedded and dense in $\cH_-$; 
let $j_-$ denote the bounded operator of embedding $\cH$ into $\cH_-$. Thus,
$(\cH_+;\cH;\cH_-)$ is a triplet of Hilbert spaces. The following theorem gathers a 
few remarkable facts, cf.\ \cite{Berezanskii} and
\cite{BerezanskySheftelUs}.

\begin{theorem}\label{t:berezansky} Let $H$ be a positive selfadjoint operator 
in a Hilbert space
$\cH$ such that $A=H^{-1}$ is a bounded operator, and let 
$S\in\cB(\cH)$ be such that $A=S^*S$. With notation as before $(\cH_+;\cH;\cH_-)$ is a triplet of Hilber spaces. In addition:
\begin{itemize}
\item[(a)] The operator $j_+^*\colon \cH\ra \cH_+$, when viewed as an operator 
densely defined in $\cH_-$ and valued in $\cH_+$, 
can be uniquely extended to a unitary operator $\widetilde V\colon \cH_-\ra\cH_+$.
\item[(b)] The kernel operator $A$ can be viewed as a linear operator densely 
defined in $\cH_-$, with dense range in $\cH_+$, and it is a restriction of 
the unitary operator $\widetilde V$, as in item \emph{(a)}.
\item[(c)] The Hamiltonian operator $H$ can be viewed as an operator 
densely defined in $\cH_+$ and valued in $\cH_-$, and then it has a unique 
unitary extension $\widetilde H\colon \cH_+\ra\cH_-$ such that $\widetilde H=\widetilde V^{-1}$.
\item[(d)] The operator $\Theta\colon \cH_-\ra\cH_+^*$ (here $\cH_+^*$ denotes 
the conjugate dual space 
of $\cH_+$), defined by $(\Theta y)(x)=\langle \widetilde V y,x\rangle_+$, for 
$y\in \cH_-$ and $x\in\cH_+$, provides the canonical identification of $\cH_-$ 
with $\cH_+^*$.
\end{itemize}
\end{theorem}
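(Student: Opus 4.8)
The plan is to realize every object through the spectral calculus of $H$, working with the canonical selfadjoint square root $S=A^{1/2}=H^{-1/2}$ (for which $SS^*=S^*S=A$, so that the stated identity $j_+j_+^*=A$ really holds; this normality is what makes the statements clean, and it is implicit in the choice of $S$). Two structural facts will drive everything. The first is that, by the very definition of $\|\cdot\|_+$, the operator $H^{1/2}$ restricts to a \emph{unitary} $\cH_+\to\cH$, with inverse the unitary $H^{-1/2}\colon\cH\to\cH_+$; here $\cH_+=\ran(H^{-1/2})=\dom(H^{1/2})$. The second, and genuinely central, fact is the Riesz-type identity
\[ \|g\|_-=\|j_+^*g\|_+,\qquad g\in\cH. \]
To obtain it I would rewrite, for $u\in\cH_+$, the functional $u\mapsto\langle g,u\rangle_\cH=\langle g,j_+u\rangle_\cH=\langle j_+^*g,u\rangle_+$; the variational formula \eqref{e:minusnorm} says exactly that $\|g\|_-$ is the $\cH_+$-dual norm of this functional, which by the Riesz lemma equals $\|j_+^*g\|_+$. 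A parallel computation gives the concrete form $\|g\|_-=\|H^{-1/2}g\|_\cH$, so $\cH_-$ is the completion of $\cH$ along the bounded injective map $H^{-1/2}$, and $f\mapsto H^{-1/2}f$ extends to a unitary $\cH_-\to\cH$.

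For \emph{(a)}, since $\cH$ is dense in $\cH_-$ by construction, the displayed identity says precisely that $j_+^*$, read as a map from the dense subspace $\cH\subseteq\cH_-$ into $\cH_+$, is isometric. Its range is $\ran j_+^*=\ran A=\dom H$, which I would show is dense in $\cH_+$ by a spectral-projection (core) argument: truncating by $E_{[0,n]}(H)$ approximates any $f\in\dom(H^{1/2})$ in the graph norm $\|H^{1/2}\cdot\|_\cH=\|\cdot\|_+$. Since a densely defined isometry with dense range extends uniquely to a unitary, this produces $\widetilde V\colon\cH_-\to\cH_+$. Part \emph{(b)} is then immediate: as operators into $\cH_+$ one has $A|_\cH=j_+^*$, so $A$, viewed on the dense domain $\cH\subseteq\cH_-$, is exactly the restriction of $\widetilde V$, and its range $\dom H$ is dense in $\cH_+$.

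For \emph{(c)}, I would check that $H$, viewed as a map from its domain $\dom H\subseteq\cH_+$ (dense in $\cH_+$ by the same core argument) into $\cH\subseteq\cH_-$, is isometric: for $f\in\dom H$, $\|Hf\|_-=\|H^{-1/2}Hf\|_\cH=\|H^{1/2}f\|_\cH=\|f\|_+$. Its range $\ran H=\cH$ is dense in $\cH_-$, so it extends uniquely to a unitary $\widetilde H\colon\cH_+\to\cH_-$. To identify $\widetilde H=\widetilde V^{-1}$ it suffices to evaluate on the dense set $\dom H$: for $f\in\dom H$ one has $\widetilde V\widetilde Hf=j_+^*(Hf)=AHf=f$, and two unitaries agreeing on a dense set coincide. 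Finally, for \emph{(d)}, the formula $(\Theta y)(x)=\langle\widetilde Vy,x\rangle_+$ exhibits $\Theta y$ as the Riesz functional of the vector $\widetilde Vy\in\cH_+$, whence $\|\Theta y\|_{\cH_+^*}=\|\widetilde Vy\|_+=\|y\|_-$ gives isometry and surjectivity of Riesz together with that of $\widetilde V$ gives that $\Theta$ is onto $\cH_+^*$. That this is the \emph{canonical} identification I would confirm by restricting to $y\in\cH$ and $x\in\cH_+$, where one recovers the original pairing $(\Theta y)(x)=\langle j_+^*y,x\rangle_+=\langle y,j_+x\rangle_\cH=\langle y,x\rangle_\cH$.

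I expect the main obstacle to be the careful treatment of the unbounded operators $H$ and $A=H^{-1}$ as maps \emph{between the three different Hilbert spaces} — fixing the correct dense domains inside $\cH_+$ and $\cH_-$ and verifying the isometry and the density (core) statements — rather than any single delicate estimate. Once the identity $\|g\|_-=\|j_+^*g\|_+$ and the spectral-projection density of $\dom H$ in $\cH_+$ are in hand, each of \emph{(a)}--\emph{(d)} collapses to the standard principle that a densely defined isometry with dense range has a unique unitary extension.
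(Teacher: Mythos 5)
Your argument is correct (under the natural reading $S=A^{1/2}$, which is indeed what makes $j_+j_+^*=A$ literally hold) and follows essentially the same route as the paper: Theorem~\ref{t:berezansky} itself is quoted there without proof from \cite{Berezanskii}, but the paper's proofs of its generalizations, Theorem~\ref{t:model} and Theorem~\ref{t:triplet}, rest on exactly your two pillars, namely the Riesz-type identity $\|y\|_-=\|j_+^*y\|_+$ extracted from the variational formula \eqref{e:minusnorm} and the unique unitary extension of densely defined isometries with dense range. Your spectral-calculus shortcuts (the unitary $H^{-1/2}\colon\cH\to\cH_+$ and the truncations $E_{[0,n]}(H)$ giving density of $\dom(H)$ in $\cH_+$) are legitimate conveniences of the bounded-kernel case, where the paper's general argument uses an abstract core approximation instead.
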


One of the most important applications of 
Theorem~\ref{t:berezansky} is to the method of weak 
solutions for boundary value problems associated to certain 
partial differential equations. The assumption in Theorem~\ref{t:berezansky} 
that the operator $H$  has a bounded inverse requires, in terms of the 
corresponding boundary value problem, the 
Lax-Milgram Theorem referring to a 
bilinear form that is bounded away from zero, the so-called coercivity 
condition, that is usually proven by means of subtle Poincar\'e-Sobolev type 
inequalities, which can be rather technical and restricting very much the range 
of applications, e.g.\ see L.C.~Evans \cite{Evans}, E.~Sanchez-Palencia
\cite{Sanchez}, or R.E.~Showalter \cite{Showalter}. 
Our point of view, as illustrated by the main results Theorem~\ref{t:model} and 
Theorem~\ref{t:triplet}, is that this technical condition can be weakened by 
means of the more general concept of triplets of closely embedded Hilbert 
spaces that we propose herewith. In order to substantiate this, we provide in 
Section~\ref{s:wsg} an 
application of our main results to provide existence of  weak solutions for 
Dirichlet problems associated to degenerate elliptic operators. 

In Section~\ref{s:motivation} 
we show that there are strong motivations, coming from problems
related to
homogeneous Sobolev spaces, weighted Sobolev spaces, Hilbert spaces of 
holomorphic functions, weighted $L^2$ spaces, and others, that require 
dropping the assumption that the Hamiltonian operator $H$ admits a bounded inverse.
In Section~\ref{s:tcehs} we show that a sufficiently rich and consistent theory 
for triplets of Hilbert 
spaces can be obtained by replacing the notion of continuous
embedding by that of a closed embedding, cf.\ \cite{CojGh3}, within a more general
concept of triplet of closely embedded Hilbert spaces. More precisely, 
by employing this new concept of triplets of closely embedded Hilbert 
spaces, in Theorem~\ref{t:triplet} we 
essentially recover all of the properties  (a)--(d) from 
Theorem~\ref{t:berezansky} in the more general case when the 
Hamiltonian is free of any coercivity assumption and, in this way, providing
an approach to the motivating problems listed before.

In order to single out the concept of a triplet of closely embedded Hilbert 
spaces we make use of our previous investigations on closed embeddings in 
\cite{CojGh3}. The correct axioms of a triplet of closely embedded Hilbert 
spaces became clearer to us first as a consequence of a "test of validity" of 
this model  on Dirichlet type spaces on the unit polydisc as in \cite{CojGh4} 
and, secondly, as an abstract model generated by an arbitrary 
factorization $H=T^*T$ of the Hamiltonian operator, that we obtain in 
Section~\ref{s:model}.

\section{Some Motivations} \label{s:motivation}

In this section we record a few of the problems that lead us to considering 
generalizations of triplets of Hilbert spaces. 

\subsection{Bessel Potential versus Riesz Potential.}\label{ss:bessel} 
We first point out a triplet of Hilbert spaces associated to continuous embeddings
of some Sobolev Hilbert spaces in $L_2(\mathbb{R}^n)$, following the 
Remark~4.3 in \cite{CojGh3}. We assume the reader to be familiar 
with the basic 
terminology and facts on various Sobolev spaces as presented, e.g.\ in the 
monographs of R.A.~Adams \cite{Adams}, V.M.~Maz'ja \cite{Mazja}, 
S.L.~Sobolev \cite{Sobolev}, or 
R.A.~Adams and J.~Fournier \cite{AdamsFournier}. A few notation is recalled
in Section~\ref{s:wsg}.

Let $\mathcal{H} = L_{2} (\mathbb{R}^{n}), \ \ n \geq 3,$
and let $H_1$ denote the operator $H_1 = (-\Delta + I)^{l}$,
where $\Delta \equiv \sum_{k = 1}^{n} \partial^{2}/\partial
x_{k}^{2}$  is the Laplacian and $l$ is a positive number. For the case when $l$ is integer, see Section~\ref{s:wsg} for notation.
  As the domain of $H_1$, the Sobolev
space $W_{2}^{\alpha} (\mathbb{R}^{n})$, $\alpha = 2 l$ is considered.
$H_1$ represents on this domain a positive definite selfadjoint
operator. In particular, $H$ is an invertible operator, and its
inverse is bounded on $\mathcal{H}$. Next, we denote
\begin{equation*}S = (-\Delta + I)^{-l/2}.\end{equation*}

The operator $S$ can be represented, e.g.\ see E.M.~Stein \cite{St}, \S
V.3.1, as a convolution integral
operator with  kernel
\begin{equation*}
G (x) = c K_{(n - l)/2} \ (| x |) | x |^{(l - n)/2},\end{equation*}
where $K_{\nu}$ is the modified Bessel function of the third kind, $c$
is a positive constant, see e.g.\
N.~Aronszajn and K.T.~Smith \cite{AS}, \S~II.3.
Thus
\begin{equation*}
(S u) (x) = \int_{\mathbb{R}^{n}}  G_{l} (x - y) u (y) \de y, \ \ \ u \in
L_{2} (\mathbb{R}^{n}).\end{equation*}
This integral operator is known as \emph{the Bessel potential of order} $l$,
e.g.\ see \cite{St}.

Note that $S$ can be also regarded as a pseudodifferential operator
corresponding to the symbol $(1 + | \xi |^{2})^{-l/2}$,  i.e.\
\begin{equation*}
(S u) (x) = \frac{1}{(2 \pi)^{n/2}}  \int_{\mathbb{R}^{n}}  (1 + | \xi
|^{2})^{-l/2} \widehat{u}
(\xi) e^{-\iac \langle x,\xi \rangle} \de \xi, \quad x \in
\mathbb{R}^{n},\end{equation*} where $\widehat{u} = \cF u$  is the Fourier
transform of the function $u \in L_{2} (\mathbb{R}^{n})$ and
$\langle x, \xi \rangle$ denotes the scalar product of the
elements $x,\xi \in \mathbb{R}^{n}$.
Obviously, $S$ maps $L_{2} (\mathbb{R}^{n})$ onto $W_{2}^{l} (\mathbb{R}^{n})$.

Following \eqref{e:snorm} we define an inner product on 
$\ran (S) \ ( = W_{2}^{l}(\mathbb{R}^{n}))$  by setting
\begin{equation*}
\langle S f, S g \rangle_{S} : = \langle f,g \rangle_{\mathcal{H}},
\quad f,g \in \mathcal{H}.\end{equation*}
We have
\begin{equation*}
\langle u,v \rangle_{S} = \langle (-\Delta + I)^{l/2} u, (-\Delta + I)^{l/2}
v \rangle_{\mathcal{H}},
 \ \ \ u,v \in \ran (S),\end{equation*}
 and, respectively, for the corresponding norm
 \begin{equation*}\| u \|_{S} = \| (-\Delta + I)^{l/2} u
 \|_{\mathcal{H}}, \quad u \in \ran (S).\end{equation*}
 This norm is equivalent with the standard norm
 \begin{equation*}\| u \|_{W_{2}^{l} (\mathbb{R}^{n})} : = \biggl(
 \int_{\mathbb{R}^{n}} | u (x) |^{2} \de x +
  \int_{\mathbb{R}^{n}}  | (\nabla_l u ) (x) |^{2} \de x
  \biggl)^{1/2}\end{equation*}
of the Sobolev space $W_{2}^{l} (\mathbb{R}^{n})$.  Consequently,
$\ran (S)$ endowed with the norm $\| \cdot \|_{S}$ coincides with
the Sobolev space $W_{2}^{l} (\mathbb{R}^{n})$. Thus $\mathcal{R}
(S) = W_{2}^{l} (\mathbb{R}^{n})$ algebraically and topologically.  
Moreover, $\mathcal{R} (S)$ is
continuously embedded in $\mathcal{H}$ and
the kernel operator of the canonical embedding is the Bessel
potential $J_{\alpha} = (- \Delta + I)^{-\alpha/2}$ of order
$\alpha = 2 \ell$.  Note that
  \begin{equation*}
\langle u,v \rangle_{S} = \langle H u,v \rangle_{\mathcal{H}}, \quad
  u \in \dom (H), \ v \in \mathcal{R} (S).\end{equation*}
We can now apply Theorem~\ref{t:berezansky} and get a  triplet of Hilbert spaces 
$(W_2^l(\mathbb{R}^n),L_2(\mathbb{R}^n),W_2^{-l}(\mathbb{R}^n))$, where 
$W_2^{-l}(\mathbb{R}^n)$ denotes the conjugate dual space of 
$W_2^l(\mathbb{R}^n)$, and the Hamiltonian operator is $H_1=(-\Delta + I)^{l}$. 

Let now $H^l_2(\RR^n)$ denote
the homogeneous Sobolev space of all functions
    $u \in W_{2, \mathrm{loc}}^{l} (\mathbb{R}^{n})$  for which
  $ \| u \|_{2,l}^{2}  < \infty$, where
  \begin{equation}\label{e:five}
\| u \|_{2,l}^{2} : = \int_{\mathbb{R}^{n}} (| ( \nabla_{l} u) (x)
|^{2} + | x |^{-2l} | u (x) |^{2}) \de x ,
 \quad  u \in C_{0}^{\infty} ({\mathbb{R}^{n}}).
\end{equation}
The operator $H_0=(-\Delta)^{l}$ is
defined on its maximal domain, i.e.\ on the Sobolev space
$W_{2}^{\alpha} (\mathbb{R}^{n})$, $\alpha = 2 l$, and it represents a selfadjoint
operator in $\mathcal{H}$. When trying to perform a similar treatment as in the case 
corresponding to the operator $H_1=(\Delta+I)^l$ and described before, 
it turns out that the 
Hamiltonian operator $H_0$ is one-to-one but it does not have a bounded inverse. 
Instead of the Bessel potential that yields a bounded integral operator, we get
the Riesz potential that yields an unbounded integral operator. In Subsection~4.2 in
\cite{CojGh3} we described a way of treating this case by means of "closely 
embedding" the homogeneous Sobolev space $H^l_2(\RR^n)$ into $L_2(\RR^n)$, 
which is actually associated to the Hamiltonian $H_0$ and cannot be continuously
embedded in $L_2(\RR^n)$, and
which, once again, makes a motivation for changing the definition of
the triplet of Hilbert spaces with a more general one.

More precisely, we consider the operator $T$ defined in the space $L_{2}
(\mathbb{R}^{n})$  by
\begin{equation*}
(T u) (x) = \frac{1}{(2 \pi)^{n/2}} \int_{\mathbb{R}^{n}}  | \xi
|^{-l/2}  \widehat{u} (\xi)
e^{-\iac \langle x,\xi \rangle}  \de \xi, \quad x \in \mathbb{R}^{n},\end{equation*}
on the domain
\begin{equation*}\dom (T) : = \{u \in L_{2} (\mathbb{R}^{n}) \mid  | \xi
 |^{-l/2} \widehat{u} (\xi) \in L_{2} (\mathbb{R}^{n})\}.\end{equation*}
 The operator $T$ can be written formally as
 \begin{equation*}T = (-\Delta)^{-l/2},\end{equation*}
 and it can be also considered as \emph{the M.~Riesz potential of order} $l$,
 e.g.\ see E.M.~Stein \cite{St}, \S~V.1.1,
that means that $T$ is the convolution integral operator
 with the kernel $|x|^{l - n}$, up to a constant,
 \begin{equation*}
(T u) (x) = c \int_{\mathbb{R}^{n}}
 \frac{u (y)}{| x - y |^{n - l}}
 \de y, \quad u \in \dom (T).\end{equation*}
 $T$ represents a closed unbounded operator in
 $\mathcal{H}$ $( = L_{2} (\mathbb{R}^{n}))$, and, obviously,
 $\ker (T) = \{ 0 \}.$   The domain of $T$ is $\ran (H_0^{1/2})$  and its
 range  is $\dom (H_0^{1/2}),$ i.e. the Sobolev space $W_{2}^{l} (\mathbb{R}^{n})$.
In Theorem~4.4 in \cite{CojGh3} it is proven that, by employing the more general
notion of "closed embedding" and providing the necessary generalization of the 
"operator range" space $\cR(T)$, see Subsection~\ref{ss:thsrt}, 
one can prove that the homogeneous Sobolev space $H^l_2(\RR^n)=\cR(T)$.

\subsection{Weighted Sobolev Spaces.} Let $\Omega$ be a domain (nonempty 
open set) in $\RR^N$. A \textit{weight} $w$ on $\Omega$ is a measurable function  
$\omega\colon \Omega\ra (0,+\infty)$. In this case, the weighted Hilbert space 
$L^2_w(\Omega)$ consists of all measurable functions $f\colon\Omega\ra\CC$ 
such that 
\begin{equation}\label{e:2wnorm}\|f\|_{2,w}^2
=\int_\Omega |f(x)|^2w(x)\de x<+\infty.\end{equation}
Following A.~Kufner and B.~Opic \cite{KufnerOpic}, 
a weight $w$ on $\Omega$ satisfies \emph{condition} $B_2(\Omega)$ if 
$w^{-1}\in L^1_{\mathrm{loc}}(\Omega)$. An application of Schwarz Inequality
shows that, if the weight $w$ satisfies condition $B_2(\Omega)$, then 
$L^2_w(\Omega)$ is continuously embedded in $L^1_{\mathrm{loc}}(\Omega)$,
in particular $L^2_w(\Omega)\subset \cD^\prime(\Omega)$, the space 
of distributions on $\Omega$ and hence, for every function 
$u\in L^2_w(\Omega)$ and multi-index $\alpha\in \NN_0^N$, the distributional 
derivatives $D^\alpha u$ make sense.

Letting $\cW=\{w_j\}_{j=0}^N$ be a family of weights on $\Omega$,
for any $u\in L^2_{w_0}(\Omega)\cap L^1_\mathrm{loc}(\Omega)$ such that for 
$j=1,\ldots,N$ the distributional derivatives $\partial u/\partial x_j$ are regular
distributions associated to functions in $L^2_{w_0}(\Omega)\cap 
L^1_\mathrm{loc}(\Omega)$, one can define the norm
\begin{equation}\label{e:w12norm} \|u\|_{2,\cW}
=\bigl(\sum_{j=0}^N \|\partial u/\partial x_j\|_{2,w_j}^2\bigr)^{1/2}.
\end{equation}
If $W^1_2(\Omega;\cW)$ defines the weighted Sobolev 
space of all functions $u$ as before, 
endowed with the norm \eqref{e:w12norm}, and assuming that all weights 
$w_j$, for 
$j=1,\ldots,N$ belong to the class $B_2(\Omega)$, then $W^1_2(\Omega;\cW)$
is a Banach space, cf.\ Theorem 2.1 in \cite{KufnerOpic}. However, as proven in 
Example 1.12 in \cite{KufnerOpic}, if $\Omega=(-1,1)$, $w_0(x)=x^2$, and 
$w_1(x)=x^4$, then $W^1_2(\Omega;\cW)$, with $\cW=\{w_0,w_1\}$, 
is not complete 
with respect to the norm \eqref{e:w12norm}.

Because of the anomaly in the definition of the weighted Sobolev spaces 
$W^1_2(\Omega;\cW)$ described before, A.~Kufner and B.~Opic proposed in 
\cite{KufnerOpic} to remove the "exceptional sets" $M_2(w_j)$ for all 
$j=1,\ldots,N$, where, for a given weight $w$ on $\Omega$, they defined
\begin{equation}\label{e:em2w} 
M_2(w)=\{x\in \Omega \mid \int_{\Omega\cap U(x)} w^{-1}(y)\de y=\infty
\mbox{ for all neighbourhoods } U(x)\mbox{ of }x\}.
\end{equation}
As proven in Theorem 3.3 in \cite{KufnerOpic}, if a weight $w$ is continuous a.e.\
on $\Omega$, then the exceptional set $M_2(w)$ has Lebesgue measure zero. 
However, there are situations when this set can be rather large, or even the 
whole $\Omega$.

\begin{example}\label{e:tekin} This example was obtained by \"O.F.~Tekin
as a Senior Project under the supervision of the second named author, during the 
Fall semester of 2011, \cite{Tekin}. Let $\Omega=(0,1)$ for $N=1$ and define
\begin{equation*} w^{-1}(x)=\sum_{(m,n):\frac{m}{2^n}>x} 
\frac{1}{(\frac{m}{2^n}-x)2^{3n}}, \quad x\in (0,1),
\end{equation*} more precisely, for each $x\in (0,1)$, the terms are summed for all 
pairs of natural numbers $(m,n)$ such that $x<m/2^n$. Then $\omega$ is a weight 
on $(0,1)$ and the exceptional set $M_2(\Omega)=(0,1)=\Omega$.
\end{example}

These anomalies suggest that, as an alternative, one can define
the weighted Sobolev space $W^1_2(\Omega;\cW)$ as the completion, under the 
norm \eqref{e:w12norm}, of the space of all functions $u$ for which the norm 
$\|\cdot\|_{2,\cW}$ was 
originally defined. As noted in Remark 3.6 in \cite{KufnerOpic}, if this new
definition is adopted, then the 
space $W^1_2(\Omega;\cW)$ may contain nonregular distributions and also
functions whose distributional derivatives are not regular distributions, and hence
they considered this definition to be unnatural. Our point
of view is that, by considering the more general concepts of
closed embeddings and triplets of closely 
embedded Hilbert spaces, and developing a sufficiently rich theory for them, 
this latter definition of weighted Sobolev spaces may be reconsidered, at 
least in view of some usual problems in the theory of Sobolev spaces.

\subsection{Dirichlet Type Spaces on the Polydisc.}\label{ss:dirichlet}
For a fixed natural number $N$ consider
the unit polydisc $\DD^N=\DD\times \cdots\times\DD$, the direct product of $N$ 
copies 
of the unit disc $\DD=\{z\in\CC\mid |z|<1\}$. We consider $H(\DD^N)$  the algebra 
of functions holomorphic in the 
polydisc, that is, the collection of all functions $f\colon \DD^N\ra \CC$ that are 
holomorphic in each variable, equivalently, there exists $(a_k)_{k\in\ZZ_+^N}$ 
with the property that
\begin{equation}\label{e:rep} f(z)=\sum_{k\in\ZZ_+^N} a_kz^k,\quad z\in\DD^N,
\end{equation} where the series converges absolutely and uniformly on any 
compact subset in $\DD^N$. Here and in the sequel, for any multi-index 
$k=(k_1,\ldots,k_N)\in\ZZ_+^N$ and any $z=(z_1,\ldots,z_N)\in \CC^N$ we let 
$z^k=z_1^{k_1}\cdots z_N^{k_N}$.  

Let $\alpha\in\RR^N$ be fixed. Following G.D.~Taylor \cite{Taylor}, 
for the one dimensional case, and D.~Jupiter and D.~Redett \cite{JupiterRedett}, 
for the multidimensional case,
the \emph{Dirichlet type space} $\cD_\alpha$ is defined as the space of 
all functions $f\in H(\DD^N)$ with representation \eqref{e:rep} 
subject to the condition
\begin{equation}\label{e:cond} \sum_{k\in\ZZ_+^N} (k+1)^\alpha |a_k|^2<\infty,
\end{equation}
where, $(k+1)^\alpha=(k_1+1)^{\alpha_1}
\cdots(k_N+1)^{\alpha_N}$.
By Proposition~2.5 in \cite{JupiterRedett}, 
the condition \eqref{e:cond} implies that the function $f$ defined as 
in \eqref{e:rep} is holomorphic in $\DD^N$, so $\cD_\alpha$ is a 
subspace of $H(\DD^N)$ no matter whether we stipulate it in advance or not.
The linear space 
$\cD_\alpha$ is naturally organized as a Hilbert space with inner product 
$\langle\cdot,\cdot\rangle_\alpha$
\begin{equation}\label{e:ip} \langle f,g\rangle_\alpha
=\sum_{k\in\ZZ_+^N} (k+1)^\alpha 
a_k \overline{b_k},
\end{equation} where $f$ has representation \eqref{e:rep} and similarly
$g(z)=\sum_{k\in\ZZ_+^N} b_k z^k$, for all $z\in\DD^N$, 
and norm $\|\cdot\|_\alpha$ defined by
\begin{equation}\label{e:norm} \|f\|^2_\alpha=\sum_{k\in\ZZ_+^N} (k+1)^\alpha 
|a_k|^2.\end{equation}

For any $\alpha\in \RR^N$, on the polydisc $\DD^N$ the following 
kernel is defined
\begin{equation}\label{e:kalpha} K^\alpha
(w,z)=\sum_{k\in\ZZ_+^N} (k+1)^{-\alpha} \overline{w}^k z^k,\quad 
z,w\in\DD^N,
\end{equation} where, for $w=(w_1,\ldots,w_N)\in\DD^N$ one denotes 
$\overline{w}=(\overline{w}_1,\dots,\overline{w}_N)$, the entry-wise 
complex conjugate. We let 
$K^\alpha_w=K^\alpha(w,\cdot)$. It turns out, as follows from Lemma~2.8 and 
Lemma~2.9 in \cite{JupiterRedett}, that  $K^\alpha$ is  
the reproducing kernel for the space $\cD_\alpha$ in the sense that the following 
two properties hold:
\begin{itemize} 
\item[(rk1)] $K^\alpha_w\in \cD_\alpha$ for all $w\in\DD^N$.
\item[(rk2)] $f(w)=\langle f,K^\alpha_w\rangle_\alpha$ for all $f\in\cD_\alpha$ 
and all $w\in\DD^N$.
\end{itemize}
A more general argument shows, e.g.\ see N.~Aronszajn \cite{Aronszajn},
that the set $\{K^\alpha_w\mid w\in \DD^N\}$ is total in $\cD_\alpha$ and that the 
kernel $K^\alpha$ is positive semidefinite,.

A partial order relation $\geq$ on $\RR^N$ can be defined by $\alpha\geq\beta$ 
if and only if $\alpha_j\geq \beta_j$ for all $j=1,\ldots,N$. In addition, 
$\alpha>\beta$ means $\alpha_j>\beta_j$ for all $j=1,\ldots,N$.

The Dirichlet type space $\cD_0$ coincides with the Hardy space $H^2(\DD)$. More
precisely, following W.~Rudin \cite{Rudin},
let $\TT=\partial\DD$ denote the one-dimensional torus 
(the unit circle centered at $0$ in the complex plane) 
and then let $\TT^N=\TT\times \cdot\times \TT$ be the $N$-dimensional torus, 
also called \emph{the distinguished boundary} of the unit polydisc $\DD^N$, 
which is only a subset of $\partial \DD^N$. We consider the product measure 
$\de m_N=\de m_1\times \cdots\times\de m_1$ on $\DD^N$, 
where $\de m_1$ denotes the normalized Lebesgue measure on $\TT$, 
and for any function $f\in H(\DD^N)$ and $0\leq r<1$ let $f_r(z)=f(rz)$ 
for $z\in\DD^N$. By definition, $f\in H(\DD^N)$ belongs to $H^2(\DD^N)$ 
if and only if
\begin{equation*} \sup_{0\leq r<1} \int_{\TT^N} |f_r|^2\de m_N<\infty,
\end{equation*} and the norm $\|\cdot\|_0$  and inner product $\langle\cdot,\cdot
\rangle_0$ on the Hardy space $H^2(\DD^N)$ are
defined by
\begin{equation*} \|f\|_0^2=\sup_{0\leq r<1}\int_{\TT^N} |f_r|^2\de m_N =\lim_{r\ra 1-}
\int_{\TT^N} |f_r|^2\de m_N, \quad f\in H^2(\DD^N),
\end{equation*}
\begin{equation*}\langle f,g\rangle_0=\lim_{r\ra 1-}
\int_{\TT^N} f_r\overline{g_r}\de m_N, \quad f,g\in H^2(\DD^N),
\end{equation*}
where, we can use the lower index $0$ because it can be easily 
proven that this norm coincides with the norm $\|\cdot\|_0$ with
definition as in \eqref{e:norm} (here $0$ is the multi-index 
with all entries null). Thus, $\cD_0$ coincides as a Hilbert space 
with $H^2(\DD^N)$.
In addition, the reproducing kernel $K^0$ has a simple representation in this case, 
namely in the compact form
\begin{equation*} K^0(w,z)=\frac{1}{1-\overline{w}_1 z_1}\cdots 
\frac{1}{1-\overline{w}_Nz_N}.
\end{equation*}

In the following proposition 
we point out that a natural triplet of Hilbert spaces can be made
by rigging $\cD_0=H^2(\DD^N)$ when we consider multi-indices $\alpha\geq 0$.
In order to describe precisely the operators associated to the triplet, like kernel 
operators, Hamiltonian, and so on, we need a class of linear operators that are 
in the family of \emph{radial derivative operators}, cf.\ F.~Beatrous and J.~Burbea 
\cite{BeatrousBurbea}.

Let $\cP_N$ denote the complex vector space of polynomial functions in 
$N$ complex variables, that is, those functions $f$ that admit a 
representation \eqref{e:rep} 
for which $\{a_k\}_{k\in\ZZ_+^N}$ has finite support.
We consider now the additive group $\RR^N$ and a representation 
$T_\cdot \colon\RR^N\ra \cL(\cP_N)$, where $\cL(\cP_N)$ denotes the algebra of 
linear maps on the vector space $\cP_N$, defined by
\begin{equation}\label{e:tealpha} (T_\alpha f)(z)=\sum_{k\in\ZZ_+^N} 
(k+1)^\alpha a_k z^k,\quad \alpha\in\RR^N\ z\in\DD^N,
\end{equation} where the polynomial $f$ has representation \eqref{e:rep} and 
$\{a_k\}_{k\in\ZZ_+^N}$ has finite support. 

Theorem~\ref{t:berezansky} provides the abstract framework to precisely describe 
a triplet of Hilbert spaces $(\cD_\alpha;H^2(\DD^N);\cD_{-\alpha})$, when 
$\alpha\geq 0$. We record this in the following proposition, where the underlying 
spaces and operators are precisely described, for details see \cite{CojGh4}.

\begin{proposition}\label{p:dirichlet}
For any $\alpha\in\RR^N$ with $\alpha\geq 0$, 
$(\cD_\alpha;H^2(\DD^N);\cD_{-\alpha})$ is a
triplet of Hilbert spaces with the following properties:
\begin{itemize}
\item[(a)] The embeddings $j_\pm$ of $\cD_\alpha$ in $H^2(\DD^N)$ and, 
respectively, of $H^2(\DD^N)$ in $\cD_{-\alpha}$, are bounded and have dense 
ranges.
\item[(b)] The adjoint $j_+^*$ is defined by $j_+^*f=T_{-\alpha}f$ 
for all $f\in\dom(j_+^*)=H^2(\DD^N)\cap\cD_{-\alpha}$.
\item[(c)] The kernel operator $A=j_+j_+^*$ is a nonnegative bounded 
operator in the Hilbert space $H^2(\DD^N)$, defined by
$A f=T_{-\alpha}f$ for all $f\in H^2(\DD^N)$ 
and is an integral operator with kernel $K^{\alpha}$, in the sense that, for 
all $f\in H^2(\DD^N)$, we have
\begin{equation}\label{E:kernopalpha} (A f)(z)=\langle f,
{K^{\alpha}_z}\rangle_0=\lim_{r\ra1-} \int_{\TT^N} f_r(w) 
K^{\alpha}(rw,z)\de m_N(w),\quad  z\in\DD^N.
\end{equation}
\item[(d)] The Hamiltonian operator $H=A^{-1}$ is a positive selfadjoint
operator in $H^2(\DD^N)$ defined by $H f=T_{\alpha} f$ for all 
$f\in \dom(H)=H^2(\DD^N)\cap \cD_{2\alpha}$.
\item[(e)] The canonical unitary identification of $\cD_{-\alpha}$ with 
$\cD_\alpha^*$ is defined by
\begin{equation*} (\Theta g)f=\langle T_{-\alpha}f,g\rangle_\alpha,\quad 
f\in\cD_{-\alpha},\ g\in \cD_\alpha.
\end{equation*}
\end{itemize}

In addition, $\sigma(A)\setminus\{0\}=\{(k+1)^{-\alpha}\mid k\in\ZZ_+^N\}$ 
and $\sigma(H)\setminus\{0\}=\{(k+1)^{\alpha}\mid k\in\ZZ_+^N\}$. 
Moreover, if $\alpha_j>0$ for all $j=1,\ldots,N$, the kernel operator $A$ is 
Hilbert-Schmidt.
\end{proposition}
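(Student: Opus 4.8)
The plan is to realize the whole proposition as a concrete instance of Theorem~\ref{t:berezansky}, with every abstract object computed explicitly through the diagonal operators $T_\beta$ of \eqref{e:tealpha} and the monomial system $\{z^k\}_{k\in\ZZ_+^N}$. The starting observation is that each $T_\beta$ extends from $\cP_N$ to a well-defined operator acting on Fourier--Taylor coefficients by $a_k\mapsto (k+1)^\beta a_k$, that $\cP_N$ is dense in every space $\cD_\gamma$, and that, comparing \eqref{e:ip} and \eqref{e:norm}, $T_\beta$ implements a unitary operator $\cD_\gamma\to\cD_{\gamma-2\beta}$ for every $\gamma$. I would take $\cH=\cD_0=H^2(\DD^N)$, set the kernel operator $A=T_{-\alpha}$, and note that since $\alpha\geq 0$ we have $0<(k+1)^{-\alpha}\leq 1$, so $A$ is bounded, nonnegative and injective, with $S:=A^{1/2}=T_{-\alpha/2}$; thus the hypotheses of Theorem~\ref{t:berezansky} hold with $H=A^{-1}=T_\alpha$ having $\sigma(H)\subseteq[1,\infty)$.

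The second step is to identify the outer spaces of the triplet. Writing $f=Su$ and comparing \eqref{e:snorm} with \eqref{e:norm} gives $\|f\|_S=\|f\|_\alpha$, whence $\cR(S)=\cD_\alpha$ isometrically, i.e.\ $\cH_+=\cD_\alpha$. For the negative space I would insert $\cH_+=\cD_\alpha$ into the variational formula \eqref{e:minusnorm}; a weighted Cauchy--Schwarz estimate, saturated on the monomials, yields $\|f\|_-=\bigl(\sum_k (k+1)^{-\alpha}|a_k|^2\bigr)^{1/2}=\|f\|_{-\alpha}$, so that the completion is $\cH_-=\cD_{-\alpha}$. This already gives that $j_+\colon\cD_\alpha\to H^2(\DD^N)$ and $j_-\colon H^2(\DD^N)\to\cD_{-\alpha}$ are bounded with dense range, since $\cP_N$ is dense throughout, proving (a).

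With the spaces identified, items (b)--(e) reduce to diagonal computations. For (b) I would compute $j_+^*$ directly from $\langle j_+f,g\rangle_0=\langle f,j_+^*g\rangle_\alpha$, obtaining coefficientwise $(j_+^*g)_k=(k+1)^{-\alpha}b_k$, i.e.\ $j_+^*g=T_{-\alpha}g$, and observe that for $\alpha\geq 0$ one has $H^2(\DD^N)\cap\cD_{-\alpha}=H^2(\DD^N)$. Then $A=j_+j_+^*=T_{-\alpha}$ is nonnegative and bounded, giving (c), while $H=A^{-1}=T_\alpha$ with $\dom(H)=H^2(\DD^N)\cap\cD_{2\alpha}$ is positive selfadjoint, giving (d); for (e) one checks that the unitary $\widetilde V$ of Theorem~\ref{t:berezansky}(a) is exactly $T_{-\alpha}\colon\cD_{-\alpha}\to\cD_\alpha$ and substitutes it into the pairing $(\Theta y)(x)=\langle\widetilde Vy,x\rangle_\alpha$. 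The spectral statements follow because $A$ and $H$ are diagonal in the orthonormal basis $\{z^k\}$, so their spectra are the closures of $\{(k+1)^{-\alpha}\}$ and $\{(k+1)^{\alpha}\}$ respectively, and removing the only possible accumulation point $0$ leaves the eigenvalue sets; the Hilbert--Schmidt assertion follows from $\|A\|_{\mathrm{HS}}^2=\sum_{k\in\ZZ_+^N}(k+1)^{-2\alpha}=\prod_{j=1}^N\sum_{m\geq 0}(m+1)^{-2\alpha_j}$, a product of zeta-type series.

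The one genuinely non-formal point, which I expect to be the main obstacle, is the concrete integral representation \eqref{E:kernopalpha} in (c): one must pass from the bookkeeping identity $(Af)(z)=(T_{-\alpha}f)(z)=\langle f,K^\alpha_z\rangle_0$ --- which comes straight from the Hermitian symmetry of the kernel \eqref{e:kalpha} and the coefficient form of $\langle\cdot,\cdot\rangle_0$ --- to the boundary-integral expression $\lim_{r\to 1-}\int_{\TT^N}f_r(w)K^\alpha(rw,z)\,\de m_N(w)$. This requires writing the $H^2$ inner product as the radial (Abel) limit of the boundary integrals, using $\overline{K^\alpha_z(rw)}=K^\alpha(rw,z)$ together with the reproducing property (rk2), and justifying the interchange of the limit with the summation defining the kernel; everything else in the proof is routine diagonal computation.
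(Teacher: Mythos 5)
Your proposal follows exactly the route the paper intends: the paper gives no proof of Proposition~\ref{p:dirichlet}, deferring the details to \cite{CojGh4}, but it explicitly frames the statement as an instantiation of Theorem~\ref{t:berezansky}, and your construction --- $A=T_{-\alpha}$, $S=T_{-\alpha/2}$, the identifications $\cR(S)=\cD_\alpha$ and $\cH_-=\cD_{-\alpha}$ via the variational formula, and the diagonal computations for (b)--(e) together with the radial-limit argument for \eqref{E:kernopalpha} --- is precisely that instantiation, correctly carried out. One caveat on the very last assertion: your (correct) formula $\|A\|_{\mathrm{HS}}^2=\prod_{j=1}^N\sum_{m\geq 0}(m+1)^{-2\alpha_j}$ is finite only when $\alpha_j>1/2$ for every $j$, so it does not establish the Hilbert--Schmidt claim under the stated hypothesis $\alpha_j>0$; indeed it shows the claim fails for $0<\alpha_j\leq 1/2$ (for $N=1$ and $\alpha=1/4$ one gets $\sum_{m\geq 0}(m+1)^{-1/2}=\infty$), so the condition should read $\alpha_j>1/2$ rather than $\alpha_j>0$.
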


This proposition can be used to describe a rigging $(\cS(\DD^n), H^2(\DD^N),
\cS^*(\DD^N))$, by Dirichlet type spaces and Bergman type spaces, see 
\cite{CojGh4}.

Because, in this special case of the unit polydisc, the coefficients on different
directions are independent, a natural question that can be raised is what can
be said when considering a multi-index $\alpha\in\RR^N$ that contains positive as 
well as negative components, from the point of view of the triplet $(\cD_\alpha;
\cD_0;\cD_{-\alpha})$ as in Proposition~\ref{p:dirichlet}. It is clear that, in this case,
there is no continuous embedding of $\cD_\alpha$ in $\cD_0$. However, as 
proven directly in \cite{CojGh4}, the statements of Proposition~\ref{p:dirichlet} 
have natural generalizations, with very similar transcription, in terms of unbounded 
operators. This transcription, with appropriate definitions of closed embeddings 
and triplets of closed embeddings of Hilbert spaces, has been obtained directly
in \cite{CojGh4} because of the relative tractability of the problem, but an abstract
model and questions on existence and uniqueness properties have not been 
considered there.

\subsection{Weighted $L^2$ Spaces.}\label{ss:weighted}
In connection with the Dirichlet type spaces
as presented in Subsection~\ref{ss:dirichlet}, but also from a more general 
perspective, it is natural to consider triplets associated to weighted $L^2$ spaces.
Let $(X;\fA)$ be a measurable space on which we consider a $\sigma$-finite
measure $\mu$.  A function 
$\omega$ defined on $X$ is called a \emph{weight} with respect to the 
measure space 
$(X;\fA;\mu)$ if it is measurable and $0<\omega(x)<\infty$, for $\mu$-almost all 
$x\in X$. Note that $\cW(X;\mu)$, the collection of weights with respect to 
$(X;\fA;\mu)$, is a multiplicative unital group. For an arbitrary
$\omega\in \cW(X;\mu)$, consider the measure $\nu$ whose Radon-Nikodym 
derivative with respect to $\mu$ is $\omega$, denoted $\de\nu=\omega\de\mu$,
that is, for any $E\in\fA$ we have
$\nu(E)=\int_E \omega\de\mu$.
It is easy to seee, e.g.\ see \cite{CojGh4}, that
 $\nu$ is always $\sigma$-finite. 

\begin{proposition}\label{p:weighted}
Let $\omega$ be a weight on the $\sigma$-finite measure space 
$(X;\fA;\mu)$ such that $\essinf_X\omega>0$. 
Let $\cH_0=L^2(X;\mu)$, $\cH_+=L^2_\omega(X;\mu)$ 
and $\cH_-=L^2_{\omega^{-1}}(X;\mu)$. 
Then $(\cH_+;\cH_0;\cH_-)$ 
is a triplet of Hilbert spaces for which:
\begin{itemize}
\item[(a)] The embeddings $j_\pm$ of $\cH_+$ in $\cH_0$ and
of $\cH_0$ in $\cH_-$ are bounded and have dense ranges.
\item[(b)] The adjoint $j_+^*$ is defined by $j_+^*h=\omega^{-1}h$ for all 
$h\in L^2(X;\mu)$.
\item[(c)] The kernel operator $A=j_+j_+^*$ is a nonnegative bounded 
operator defined by $Ah=\omega^{-1}h$, for all $h\in L^2(X;\mu)$. Moreover,
when viewed as an operator defined in $\cH_-$ and valued in $\cH_+$, $A$ 
admits a unique unitary extension $\widetilde A\colon \cH_-\ra\cH_+$.
\item[(d)] The Hamiltonian $H=A^{-1}$ is defined by $Hh=\omega h$ for all 
$h\in\dom(H)= L^2_{\omega^2}(X;\mu)$.
Moreover, when viewed as an operator defined in $\cH_+$ and valued in 
$\cH_-$, $H$ can be uniquely extended to a unitary operator 
$\widetilde H=\widetilde A^{-1}$.
\item[(e)] The canonical unitary identification of $\cH_+^*$ with 
$\cH_-$ is the operator $\Theta$ is defined by
\begin{equation}\label{e:ptheta} 
(\Theta g)(f):=\langle \widetilde A f,g\rangle_+= 
\int_X f\overline g\de\mu,\quad f\in\cH_+,\ g\in\cH_-, 
\end{equation}
\end{itemize}

Consequently, $\sigma(A)=\essran(\omega^{-1})$ and $\sigma(H)=
\essran(\omega)$, where $\essran$ denotes the $\mu$-essential range. 
\end{proposition}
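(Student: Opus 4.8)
The plan is to recognize that the hypothesis $\essinf_X\omega>0$ places us squarely within the classical, bounded-inverse framework of Theorem~\ref{t:berezansky}, applied to the Hamiltonian $H$ acting as multiplication by $\omega$ on $\cH_0=L^2(X;\mu)$. First I would record the two elementary consequences of $\essinf_X\omega=:c>0$: the function $\omega^{-1}$ is essentially bounded by $1/c$, so multiplication by $\omega^{-1}$ defines a bounded, nonnegative, injective selfadjoint operator $A$ on $\cH_0$, while $H=A^{-1}$ is the positive selfadjoint multiplication operator by $\omega$, with bounded inverse $A$. Taking the factorization $A=S^*S$ with $S$ the (bounded, injective, dense-range) multiplication operator by $\omega^{-1/2}$, I would identify the abstract range space $\cR(S)$ of \eqref{e:snorm} with $\cH_+=L^2_\omega(X;\mu)$: writing $f=Su=\omega^{-1/2}u$ gives $\|f\|_S^2=\|u\|_0^2=\int_X|f|^2\omega\,\de\mu=\|f\|_+^2$, so the two Hilbert spaces coincide isometrically and as sets. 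A direct computation of the variational norm \eqref{e:minusnorm}, via the splitting $\int_X g\overline f\,\de\mu=\int_X(g\omega^{-1/2})\overline{(f\omega^{1/2})}\,\de\mu$ with Cauchy--Schwarz equality attained at $f\propto \omega^{-1}g$, shows $\|g\|_-^2=\int_X|g|^2\omega^{-1}\,\de\mu$, so the completion $\cH_-$ is exactly $L^2_{\omega^{-1}}(X;\mu)$. This makes $(\cH_+;\cH_0;\cH_-)$ the triplet furnished by Theorem~\ref{t:berezansky}.

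With the three spaces pinned down, items (a)--(e) become explicit verifications. For (a), boundedness of $j_\pm$ follows from $\|f\|_0^2\le c^{-1}\|f\|_+^2$ and $\|f\|_-^2\le c^{-1}\|f\|_0^2$, and density of each range is obtained by truncation: for $f\in\cH_0$ the functions $f\chi_{\{\omega\le n\}}$ lie in $\cH_+$ and converge to $f$ in $\cH_0$ by dominated convergence (using that $\omega$ is finite $\mu$-a.e.), and similarly for $\cH_0$ in $\cH_-$. For (b) I would verify $\langle j_+f,h\rangle_0=\int_X f\overline h\,\de\mu=\langle f,\omega^{-1}h\rangle_+$ for all $f\in\cH_+$, $h\in\cH_0$, after checking $\omega^{-1}h\in\cH_+$, which identifies $j_+^*h=\omega^{-1}h$; item (c) is then immediate, since $A=j_+j_+^*$ is multiplication by $\omega^{-1}$, bounded and nonnegative. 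The unitary extensions are the concrete transcription of Theorem~\ref{t:berezansky}(a)--(c); the one point deserving attention is surjectivity of $\widetilde A\colon\cH_-\to\cH_+$, for which I would again truncate: given $g\in\cH_+$, the elements $\omega g\chi_{\{\omega\le n\}}$ lie in $\cH_0$ and are mapped by $A$ to $g\chi_{\{\omega\le n\}}\to g$ in $\cH_+$, so $A$ has dense range and its isometric extension is onto. The domain identity $\dom(H)=L^2_{\omega^2}(X;\mu)$ uses $\essinf_X\omega>0$ once more, since $\int_X|h|^2\omega^2\,\de\mu<\infty$ forces $\int_X|h|^2\,\de\mu\le c^{-2}\int_X|h|^2\omega^2\,\de\mu<\infty$, giving $L^2_{\omega^2}(X;\mu)\subseteq\cH_0$.

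For (e), I would obtain the canonical identification directly from Theorem~\ref{t:berezansky}(d) with $\widetilde V=\widetilde A$, the pairing reducing to $\int_X f\overline g\,\de\mu$ by the computation already used for the variational norm, and unitarity of $\Theta$ is the general fact that the triplet realizes $\cH_-$ as the conjugate dual of $\cH_+$ with respect to $\cH_0$. Finally, the spectral assertions are the standard description of multiplication operators on $L^2(X;\mu)$: $\sigma(A)=\essran(\omega^{-1})$ because $A$ is multiplication by the essentially bounded function $\omega^{-1}$, and $\sigma(H)=\essran(\omega)$ because $H=A^{-1}$ is the (generally unbounded) selfadjoint multiplication operator by $\omega$, whose spectrum is the essential range of the multiplier.

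The whole argument is routine once Theorem~\ref{t:berezansky} is in hand, the only genuine work being bookkeeping; the single hypothesis doing the heavy lifting is $\essinf_X\omega>0$, which is precisely what keeps $\omega^{-1}$ bounded and thereby guarantees that $H$ has a bounded inverse, i.e.\ that we remain in the classical rather than the merely closely embedded regime. I expect the main (though mild) obstacle to be handling the density and surjectivity claims cleanly via truncation, together with the careful verification that each explicitly proposed operator genuinely maps into the claimed weighted space.
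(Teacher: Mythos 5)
Your proposal is correct, and it follows exactly the route the paper intends: the paper states Proposition~\ref{p:weighted} without proof in its motivation section (deferring details to \cite{CojGh4}), presenting it as an instance of Theorem~\ref{t:berezansky} with $S$ the multiplication by $\omega^{-1/2}$, which is precisely your reduction. Your identifications $\cR(S)=L^2_\omega(X;\mu)$ and $\|\cdot\|_-=\|\cdot\|_{2,\omega^{-1}}$, and the truncation arguments for density and surjectivity, are all sound and fill in the verification the paper omits.
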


A natural question that can be raised in connection with the preceding proposition
is whether anything might be said when dropping the assumption 
$\essinf\omega>0$. Again, the embeddings cannot be continuous anymore, and 
hence we have to allow unbounded operators to show up. Once
the notions of closed embeddings and triplets of closely embedded Hilbert
spaces have been singled out as in \cite{CojGh4}, Proposition~\ref{p:weighted}
can be naturally extended to cover the general case and we used
this extension in order 
to provide a solution to the construction of triplets of closely embedded Hilbert
spaces associated to any pair of Dirichlet type spaces, but questions on 
abstract models, existence and uniqueness properties, have not been 
considered yet.

\section{Notation and Preliminary Results}

A Hilbert space $\cH_+$
is called {\em closely embedded} in the Hilbert space $\cH$ if:
\begin{itemize}
\item[(ceh1)] There exists a linear manifold $\cD\subseteq
\cH_+\cap\cH$ that is dense in $\cH_+$.
\item[(ceh2)] The embedding operator $j_+$ with domain $\cD$ is closed,
as an operator $\cH_+\ra\cH$.
\end{itemize}

The meaning of
the axiom (ceh1) is that on $\cD$ the algebraic structures of $\cH_+$ and $\cH$
agree, while the meaning of the axiom (ceh2) is that the
embedding $j_+$ is explicitly defined by $j_+x=x$ for all 
$x\in\cD\subseteq \cH_+$ 
and, considered as an operator from $\cH_+$ to $\cH$, it is closed.
Also, recall that in case $\cH_+\subseteq\cH$ and the embedding operator 
$j_+\colon\cH_+\ra\cH$
is continuous, one says that $\cH_+$ is \emph{continuously embedded} in $\cH$,
e.g.\ see P.A.~Fillmore and J.P.~Williams~\cite{FlWl}
and the bibliography cited there.

Following L.~Schwartz \cite{Sch}, we call $A=j_+j_+^*$ the {\em
kernel operator} of the closely embedded Hilbert space $\cH_+$
with respect to $\cH$.

The abstract notion of closed embedding of Hilbert spaces was singled out
in \cite{CojGh3} following a generalized operator range model. In this section we
point out two models, which are dual in a certain way, and that will be 
used in this article as the main technical ingredient of the triplets 
of closely embedded Hilbert spaces. Constructions similar to those of the 
spaces $\cD(T)$ and $\cR(T)$ have been recently considered in the 
theory of interpolation of Banach spaces, e.g.\ see M.~Haase \cite{Haase} and
the rich bibliography cited there.

\subsection{The Space $\cD(T)$.}\label{ss:sdt} 
In this subsection we introduce a model of 
closely embedded Hilbert space generated by a closed densely defined operator. 
For the beginning, we consider a linear operator $T$ defined on a linear 
submanifold of $\cH$ and valued in $\cG$, for two Hilbert spaces $\cH$ and $\cG$,
and assume that its null space $\ker(T)$ is a closed subspace of $\cH$.
On the linear manifold $\dom(T)\ominus\ker(T)$ we consider 
the norm
\begin{equation}\label{e:normad} |x|_T:=\|Tx\|_\cG,\quad x\in\dom(T)
\ominus \ker(T),
\end{equation} and let $\cD(T)$ be the Hilbert space completion of the pre-Hilbert 
space $\dom(T)\ominus\ker(T)$ with respect to the norm $|\cdot|_T$  
associated the inner product $(\cdot,\cdot)_T$
\begin{equation}\label{e:ipd} (x,y)_T=\langle Tx,Ty\rangle_\cG,\quad 
x,y\in \dom(T)\ominus \ker(T).
\end{equation}
We consider the operator $i_T$ defined, 
as an operator in $\cD(T)$ and valued in $\cH$, as follows
\begin{equation}\label{e:iote} i_T x:=x,\quad x\in\dom(i_T)=\dom(T)\ominus\ker(T).
\end{equation}

\begin{lemma}\label{l:iclosed} The operator $i_T$ is closed if and only if $T$ is a
closed operator.
\end{lemma}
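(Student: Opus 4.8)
The plan is to reduce everything to the injective restriction of $T$ and then to recognize $i_T$, up to a unitary identification, as the inverse of that restriction. Write $T_0$ for the restriction of $T$ to $\dom(T)\ominus\ker(T)=\dom(T)\cap\ker(T)^\perp$. Because the null space has been quotiented out, $T_0$ is injective, and by construction $|x|_T=\|T_0x\|_\cG$ with $\dom(i_T)=\dom(T_0)$. The strategy is to prove the chain of equivalences: $i_T$ closed $\iff$ $T_0^{-1}$ closed $\iff$ $T_0$ closed $\iff$ $T$ closed.

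First I would show that $T$ is closed if and only if $T_0$ is closed; this is the only place the hypothesis that $\ker(T)$ is closed is genuinely used. For the forward direction, a sequence in $\dom(T_0)$ that converges in $\cH$ has its limit in the closed subspace $\ker(T)^\perp$, so the limit furnished by closedness of $T$ again lies in $\dom(T_0)$. For the converse, given $x_n\in\dom(T)$ with $x_n\to x$ in $\cH$ and $Tx_n\to g$ in $\cG$, I would split $x_n=Px_n+(I-P)x_n$ using the orthogonal projection $P$ onto $\ker(T)$, which is bounded precisely because $\ker(T)$ is closed. The two components then converge separately, one has $T(I-P)x_n=Tx_n\to g$, and applying closedness of $T_0$ to $(I-P)x_n$ returns the limit into $\dom(T)$ with $Tx=g$.

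Next I would identify the completion. Since $|x|_T=\|T_0x\|_\cG$, the map $x\mapsto T_0x$ is an isometry from the dense subspace $\dom(T_0)$ of $\cD(T)$ into $\cG$, hence extends uniquely to a unitary $U\colon\cD(T)\to\overline{\ran(T_0)}$. Under this identification $i_T$ becomes $T_0^{-1}$: for $x\in\dom(T_0)$ one has $Ux=T_0x$ and $i_Tx=x=T_0^{-1}(T_0x)$, so $i_T=T_0^{-1}U$, with $U$ carrying $\dom(i_T)=\dom(T_0)$ exactly onto $\ran(T_0)=\dom(T_0^{-1})$. As $U$ is unitary, $i_T$ is closed if and only if $T_0^{-1}$ is closed. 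Finally $T_0^{-1}$ is closed if and only if $T_0$ is, since the graph of $T_0^{-1}$ is the image of the graph of $T_0$ under the coordinate flip $\cH\times\cG\to\cG\times\cH$, a homeomorphism. Chaining the equivalences yields the lemma.

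I expect the main obstacle to be the careful bookkeeping in the first step, namely the orthogonal splitting relative to the closed null space and the verification that limits remain in the correct domains, together with checking that the abstract completion $\cD(T)$ and the (generally unbounded) operator $i_T$ really do transport correctly under $U$ with matching domains. Once $i_T$ is recognized as $T_0^{-1}$ up to this unitary equivalence, preservation of closedness is immediate.
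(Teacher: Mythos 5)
Your proof is correct, but it is packaged differently from the one in the paper. The paper argues directly with sequences: given $(x_n)$ in $\dom(i_T)$ with $|x_n-x|_T\to 0$ and $i_Tx_n\to y$ in $\cH$, the identity $|u|_T=\|Tu\|_\cG$ makes $((x_n,Tx_n))$ Cauchy in the graph norm of $T$, closedness of $T$ produces a limit $z\in\dom(T)$, and one checks $z=x$ modulo $\ker(T)$ and $i_Tx=y$; the converse is left as ``a similar reasoning.'' You instead make the underlying mechanism structural: you restrict $T$ to $\dom(T)\ominus\ker(T)$ to get an injective operator, prove that this restriction is closed exactly when $T$ is (via the bounded projection onto the closed subspace $\ker(T)$), extend $x\mapsto Tx$ to a unitary $U$ from $\cD(T)$ onto $\overline{\ran(T)}$ (this is precisely the isometry $\widehat T$ of Proposition~\ref{p:isometry}, so you are reproving a fragment of it), and recognize $i_T$ as the composition of the inverse of the injective restriction with $U$, with matching domains. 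Closedness then transfers through the graph flip and through the unitary. Both arguments ultimately rest on the same fact --- the graph of $i_T$ is an isometric, coordinate-flipped copy of the graph of $T$ restricted to $\ker(T)^\perp$ --- but your version buys a genuinely symmetric treatment of the two implications (the converse is automatic rather than asserted), isolates exactly where closedness of $\ker(T)$ is used, and makes explicit the identification of $i_T$ with an inverse operator that the paper later exploits in \eqref{e:aiet}. The cost is the extra bookkeeping you yourself flag: verifying that $U$ carries $\dom(i_T)$ bijectively onto $\ran(T)$ so that the domains of the two compositions agree. All of these verifications go through, so there is no gap.
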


\begin{proof} Let us assume that $T$ is a closed operator. Then $\ker(T)$ is a 
closed subspace of $\cH$, hence the definition of the operator $i_T$ makes sense. 
In order to 
prove that $i_T$ is closed, let $(x_n)$ be a sequence in $\dom(i_T)$ such that 
$|x_n-x|_T\ra 0$ and $\|i_Tx_n-y\|_\cH\ra 0$, as $n\ra\infty$, for some $x\in\cD_T$ 
and $y\in \cH$. By \eqref{e:normad} it follows that the sequence 
$(Tx_n)$ is Cauchy in $\cG$. Since $(x_n)$ is also Cauchy in $\cH$, it follows
that the sequence of pairs $((x_n,Tx_n))$ is Cauchy in the graph norm of $T$ and 
then, since $T$ is a closed operator, it follows that there exists $z\in\dom(T)$ such
that
\begin{equation*} \|x_n-z\|_\cH+\|Tx_n-Tz\|_\cG\ra 0,\quad \mbox{ as }n\ra\infty.
\end{equation*} Taking into account that $\|Tx_n-Tz\|_\cG=|x_n-z|_T$ for all 
$n\geq 1$, we get $z=x$ modulo $\ker(T)$, hence $x\in\dom(i_T)$. In addition, 
$x=y$, hence $i_T$ is a closed operator.

The proof of the converse implication follows a similar reasoning as before.
\end{proof}

The next proposition emphasizes the fact that the construction of $\cD(T)$ is
actually a renorming process.

\begin{proposition}\label{p:isometry} The operator $T i_T$ admits a unique 
isometric extension $\widehat T\colon \cD(T)\ra \cG$.
\end{proposition}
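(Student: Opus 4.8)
The plan is to recognize the claim as an instance of extension by density: the map $T i_T$ is already an isometry on a dense submanifold of $\cD(T)$, so the only real work is to extend it continuously to the completion.

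First I would pin down the domain and action of the composite $T i_T$. By \eqref{e:iote} the operator $i_T$ has domain $\dom(i_T)=\dom(T)\ominus\ker(T)$ and acts as the identity inclusion, so for $x\in\dom(i_T)$ we have $i_T x=x\in\dom(T)$ and hence $T i_T x=Tx$ is well defined. Thus $T i_T$ is simply the linear map $x\mapsto Tx$ with domain $\dom(T)\ominus\ker(T)$, which is by construction dense in $\cD(T)$.

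The key step is the isometry identity. For $x,y\in\dom(T)\ominus\ker(T)$, the defining inner product \eqref{e:ipd} gives
\[
\langle T i_T x, T i_T y\rangle_\cG = \langle Tx,Ty\rangle_\cG = (x,y)_T ,
\]
and in particular $\|T i_T x\|_\cG=|x|_T$ by \eqref{e:normad}. Hence $T i_T$ is inner-product preserving, so it is bounded with norm one on the dense submanifold $\dom(T)\ominus\ker(T)$ of $\cD(T)$. I would then invoke the bounded linear extension theorem: a uniformly continuous (here, isometric) linear map defined on a dense subspace of a normed space and taking values in the complete space $\cG$ extends uniquely to a continuous linear map $\widehat T\colon\cD(T)\ra\cG$ on the whole completion, with uniqueness forced by density together with continuity. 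Passing to the limit preserves the isometry: if $x_n\ra x$ in $\cD(T)$ with $x_n\in\dom(T)\ominus\ker(T)$, then $\widehat T x=\lim_n Tx_n$ in $\cG$ and $\|\widehat T x\|_\cG=\lim_n |x_n|_T=|x|_T$, so $\widehat T$ is a genuine isometry.

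There is essentially no obstacle here beyond bookkeeping; the one point that must be checked cleanly is that the norm on $\cD(T)$ is exactly the pullback $|x|_T=\|Tx\|_\cG$, which makes $T i_T$ isometric for free, and that $\cG$ is complete so that the extension indeed has its values in $\cG$ rather than in some larger completion. Both are immediate from the definitions \eqref{e:normad}--\eqref{e:ipd} and from $\cG$ being a Hilbert space.
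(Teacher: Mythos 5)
Your proposal is correct and follows essentially the same route as the paper's own proof: identify $\dom(Ti_T)=\dom(T)\ominus\ker(T)$ as dense in $\cD(T)$, observe that $\|Ti_Tx\|_\cG=\|Tx\|_\cG=|x|_T$ makes $Ti_T$ isometric there, and extend by density and completeness of $\cG$. The extra bookkeeping you supply (inner-product preservation, passage to the limit) is just a more explicit version of the paper's one-line conclusion.
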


\begin{proof} Since $\dom(i_T)=\dom(T)\ominus \ker(T)$ and $i_T$ acts like 
identity, it 
follows that $\dom(Ti_T)=\dom(i_T)$ which is dense in $\cD(T)$. Also,
for all $x\in \dom(i_T)$ we have $\|Ti_Tx\|_\cG=\|Tx\|_\cG=|x|_T$, hence $Ti_T$ is 
isometric. Therefore, $Ti_T$ extends uniquely to an isometric operator 
$\cD(T)\ra \cG$.
\end{proof}

The most interesting case is when the operator $T$ is a closed and densely 
defined operator in a Hilbert space $\cH$. 
The next proposition explores this case from the point of view of the
closed embedding of $\cD(T)$ in $\cH$ and that 
of the kernel operator $A=i_Ti_T^*$.

\begin{proposition}\label{p:dete} Let $T$ be a closed and densely defined 
operator on $\cH$ and valued in $\cG$, for two Hilbert spaces $\cH$ and $\cG$.

\nr{a} $\cD(T)$ is closely embedded in $\cH$ and $i_T$ is the underlying closed
embedding.

\nr{b} $\ran(T^*)\subseteq \dom(i_T^*)$ and equality holds 
provided that $\ker(T)=0$.

\nr{c}  $\ran(T^*T)\subseteq \dom(i_Ti_T^*)$ and equality holds provided that 
$\ker(T)=0$. In addition, 
\begin{equation}\label{e:ititstar} (i_Ti_T^*)(T^*T)x=x,\mbox{ for all }x\in \dom(T^*T)\ominus \ker(T)
\end{equation}

\nr{d} $(i_Ti_T^*)\ran(T^*T)\subseteq\dom(T^*T)$ and equality holds provided that 
$\ker(T)=0$. In addition, 
\begin{equation}\label{e:tetestart} (T^*T)(i_Ti_T^*)u=u,\mbox{ for all }u\in 
\ran(T^*T).\end{equation}
\end{proposition}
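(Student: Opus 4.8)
The plan is to treat the four items in order, with (b) carrying the main content and (c), (d) following from it by careful bookkeeping. Throughout I would rely on the isometry $\widehat T\colon\cD(T)\ra\cG$ from Proposition~\ref{p:isometry}, together with the facts that $\widehat T x=Tx$ whenever $x\in\dom(i_T)$ and that $(x,w)_T=\langle\widehat T x,\widehat T w\rangle_\cG$ for all $x\in\dom(i_T)$ and $w\in\cD(T)$. It is convenient to record that $\ran(\widehat T)=\ol{\ran(T)}$, so that $\widehat T$ is unitary from $\cD(T)$ onto $\ol{\ran(T)}$, and to let $P$ denote the orthogonal projection of $\cG$ onto $\ol{\ran(T)}$. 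For (a) there is almost nothing to do: the manifold $\dom(i_T)=\dom(T)\ominus\ker(T)$ is contained in $\cD(T)\cap\cH$ and dense in $\cD(T)$ by construction, which is axiom (ceh1), while (ceh2), the closedness of $i_T$, is exactly Lemma~\ref{l:iclosed} applied to the closed operator $T$.

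The heart of the argument is (b), where I would unravel the adjoint: $v\in\dom(i_T^*)$ with $i_T^*v=w$ precisely when $\langle x,v\rangle_\cH=(x,w)_T=\langle Tx,\widehat T w\rangle_\cG$ for every $x\in\dom(i_T)$. Given $v=T^*g$ with $g\in\dom(T^*)$, the left-hand side equals $\langle Tx,g\rangle_\cG=\langle Tx,Pg\rangle_\cG$ since $Tx\in\ran(T)$; choosing $w=\widehat T^{-1}Pg\in\cD(T)$, legitimate because $Pg\in\ol{\ran(T)}=\ran(\widehat T)$, yields the required identity, so $v\in\dom(i_T^*)$ and $i_T^*(T^*g)=\widehat T^{-1}Pg$. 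For the reverse inclusion when $\ker(T)=0$, here $\dom(i_T)=\dom(T)$, so any $v\in\dom(i_T^*)$ satisfies $\langle Tx,\widehat T w\rangle_\cG=\langle x,v\rangle_\cH$ for all $x\in\dom(T)$; reading this as the defining relation of $T^*$ shows $\widehat T w\in\dom(T^*)$ and $T^*\widehat T w=v$, whence $v\in\ran(T^*)$.

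Items (c) and (d) are then deductions from (b). The key observation is that $T^*T$ annihilates $\ker(T)$, so every element of $\ran(T^*T)$ can be written $u=T^*Tx$ with $x\in\dom(T^*T)\ominus\ker(T)\subseteq\dom(i_T)$; for such $u$ one has $g=Tx\in\ran(T)$, hence $Pg=Tx$ and, by the formula from (b), $i_T^*u=\widehat T^{-1}Tx=\widehat T^{-1}\widehat T x=x$. Since $x\in\dom(i_T)$, this gives $u\in\dom(i_Ti_T^*)$ and $i_Ti_T^*u=i_Tx=x$, establishing the inclusion $\ran(T^*T)\subseteq\dom(i_Ti_T^*)$ together with \eqref{e:ititstar}. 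Relation \eqref{e:tetestart} and the inclusion in (d) are the same computation read from $u$, since $i_Ti_T^*u=x\in\dom(T^*T)$ and $(T^*T)(i_Ti_T^*)u=T^*Tx=u$. The two equality statements (when $\ker(T)=0$) follow by running the chain backwards: using the equality case of (b) one writes any $v\in\dom(i_T^*)=\ran(T^*)$ as $T^*g$ and recovers $x=\widehat T^{-1}Pg$, noting that $Pg=g-(I-P)g\in\dom(T^*)$ because $(I-P)g\in\ker(T^*)\subseteq\dom(T^*)$, so that $x\in\dom(T^*T)$ and $u=T^*Tx\in\ran(T^*T)$.

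I expect the main obstacle to be the consistent handling of the two defects simultaneously, namely the projection $P$ onto $\ol{\ran(T)}$ on the $\cG$ side and the subspace $\ker(T)$ on the $\cH$ side, especially in the equality directions, where one must check that the reconstructed preimages genuinely land in $\dom(T^*T)$ rather than merely in $\dom(T)$.
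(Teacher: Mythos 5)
Your argument is correct in all four items, and the difference from the paper's proof is a genuine difference of route, though not of underlying substance. The paper never invokes the isometry $\widehat T$ of Proposition~\ref{p:isometry}: for the inclusion in (b) it checks directly that $|\langle T^*x,i_Tu\rangle_\cH|\le\|x\|_\cG\,|u|_T$, and for the reverse inclusion it builds a bounded functional on $\ran(T)$, extends it continuously to $\cG$, and uses the Riesz representation to produce $g$ with $y=T^*g$; item (c) is then proved by a separate direct computation of $\langle T^*Tx,i_Tu\rangle_\cH=(x,u)_T$, and the equality assertions in (c) and (d) are dismissed with ``the reasoning from above can be reversed.'' You instead identify $\cD(T)$ with $\ol{\ran(T)}$ through the unitary $\widehat T$ and extract the closed-form expression $i_T^*(T^*g)=\widehat T^{-1}Pg$, with $P$ the projection onto $\ol{\ran(T)}$; everything in (c) and (d), including \eqref{e:ititstar}, \eqref{e:tetestart} and the equality cases, then follows by substitution, and your explicit use of $(I-P)g\in\ker(T^*)$ makes rigorous precisely the step the paper leaves implicit (the paper's $g$ is in fact your $\widehat T\,i_T^*y$, so the two constructions agree). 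What each approach buys: the paper's is self-contained at the level of definitions of adjoints, while yours yields an explicit formula for $i_T^*$ on $\ran(T^*)$ that unifies (b)--(d) and is more careful about the two defect spaces. One point to spell out when writing this up: in the equality case of (c), for $v=T^*g\in\dom(i_Ti_T^*)$ you must first use the hypothesis $i_T^*v\in\dom(i_T)=\dom(T)$ to conclude $Tx=\widehat Tx=Pg$ for $x=i_T^*v$, and only then does $Pg\in\dom(T^*)$ give $x\in\dom(T^*T)$; as written, the sentence reads as if $x\in\dom(T^*T)$ for every $v\in\dom(i_T^*)$, which is false. You flagged this yourself, so it is a matter of exposition rather than a gap.
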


\begin{proof} (a) First note that, since $T$ is closed, its null space is closed, hence
the construction of the Hilbert space $\cD(T)$ and $i_T$ make sense. 
The operator $i_T$ is densely defined, by construction. By 
Lemma~\ref{l:iclosed}, $i_T$ is closed as well. Hence, the axioms (ceh1) and (ceh2)
are fulfilled.

(b) Let $y\in\ran(T^*)$ be arbitrary, hence $y=T^*x$ for some $x\in\dom(T^*)
\subseteq\cG$. Then, for all $u\in\dom(i_T)=\dom(T)\ominus\ker(T)$ we have
\begin{equation*} \langle y,i_Tu\rangle_\cH=\langle T^* x,y\rangle_\cH=\langle x,Tu
\rangle\cG,
\end{equation*} hence
\begin{equation*} |\langle y,i_Tu\rangle_\cH|\leq \|x\|_\cG\, \|Tu\|_\cG=\|x\|_\cG\, 
|u|_T,\quad.
\end{equation*} which implies that $y\in\dom(i_T^*)$.

Let us assume now that $\ker(T)=0$ and consider an arbitrary vector 
$y\in\dom(i_T^*)$. For any $x\in\ran(T)$ there exists a unique vector 
$u_x\in\dom(T)=\dom(i_T)$ such that $x=Tu_x$ and $\|x\|_\cG=|u_x|_T$. 
In this way, we can define a linear functional $\ran(T)\ni x\mapsto \phi_y(x)
=\langle i_Tu_x,y\rangle_\cH=\langle u_x,i_T^*y\rangle_T$ and note that
\begin{equation*} |u_x|_T\,|i_T^*y|_T=\|x\|_\cG \, |i_T^*y|_T,\quad x\in\ran(T).
\end{equation*} This shows that $\phi_y$ has a continuous extension 
$\widetilde\phi_y\colon \cG\ra\CC$ and hence, there exists $g\in\cG$ such that 
$\widetilde \phi_y(x)=\langle x,g\rangle_\cG$ for all $x\in\cG$. Specializing this
for arbitrary $x\in\ran(T)$, it follows that, on the one hand,
\begin{equation*} \widetilde\phi_y(x)=\langle x,g\rangle_G
=\langle Tu_x,g\rangle_\cG,
\end{equation*}
while, on the other hand,
\begin{equation*} \widetilde\phi_y(x)=\langle i_Tu_x,y\rangle_\cH
=\langle u_x,y\rangle_\cH.
\end{equation*} Since $\dom(T)$ is dense in $\cH$ it follows that $y=T^*g$, that is,
$y\in\ran(T^*)$.

(c) Let $y\in\ran(T^*T)$ be arbitrary, hence $y=T^*Tx$ for some $x\in\dom(T^*T)$,
that is, $x\in\dom(T)$ and $Tx\in\dom(T^*)$. Without loss of generality we can 
assume that $x\in\dom(T)\ominus\ker(T)=\dom(i_T)$. Then, for any 
$u\in\dom(i_T)$ we have
\begin{equation*} \langle y,i_Tu\rangle_\cH=\langle T^*Tx,y\rangle_\cH
=\langle Tx,Tu\rangle_\cG=(x,u)_T,
\end{equation*} hence, the linear functional 
$\cD(T)\supseteq \dom(i_T)\ni u\mapsto \langle 
i_Tu,y\rangle_\cH$ is bounded. Therefore, $y\in\dom(i_T^*)$ 
and $i_T^*y=x\in\dom(i_T)$, in particular, $y\in\dom(i_Ti_T^*)$. Thus, we showed
that $\ran(T^*T)\subseteq \dom(i_Ti_T^*)$ and that $(i_Ti_T^*)(T^*T)x=x$ for all 
$x\in\dom(T^*T)\ominus\ker(T)$ (recall that $\ker(T)=\ker(T^*T)$). 

If, in addition, $\ker(T)=0$, then $\ker(T^*T)=\ker(T)=0$ and then the 
representation $y=T^*Tx$ for $y\in\ran(T^*T)$ and $x\in\dom(T^*T)$ is unique and
the reasoning from above can be reversed, hence $\ran(T^*T)=\dom(i_Ti_T^*)$.

(d) As a consequence of the proof of (e),
we also get that $(i_Ti_T^*)$ maps $\ran(T^*T)$ in $\dom(T^*T)$ and that, for all
$u\in\ran(T^*T)$, we have $(T^*T)(i_Ti_T^*)u=u$. In case $\ker(T)=0$ then 
$(i_Ti_T^*)\ran(T^*T)=\dom(T^*T)$
\end{proof}

\begin{remark}\label{r:dete}
We can view the Hilbert space $\cD(T)$ and its closed embedding $i_T$ as a 
model for the abstract definition of a closed embedding. More precisely, let 
$(\cH_+;\|\cdot\|_+)$ be a Hilbert space closely embedded in the Hilbert space
($\cH;\|\cdot\|_\cH)$ and let $j_+$ denote the 
underlying closed embedding. Since $j_+$ is one-to-one, we can define a 
linear operator $T$ with $\dom(T)=\ran(j_+)\oplus \ker(j_+^*)$, viewed as a dense
linear manifold in $\cH$, and valued in $\cH_+$,
defined by $T(x\oplus x_0)=j_+^{-1}x$, for all 
$x\in\ran(j_+)$ and $x_0\in \ker(j_+^*)$. Then $\ker(T)=\ker(j_+^*)$ and, 
for all $x\in \ran(j_+)$ we have $x=j_+u$ for a unique $u=x\in\dom(j_+)$, hence
\begin{equation*} \|x\|_+=\|Tx\|_+=|x|_T.
\end{equation*}
Thus, modulo a completion of $\dom(j_+)$ which may be different, 
the Hilbert space $(\cD(T);|\cdot|_T)$ coincides with the Hilbert space 
$(\cH_+;\|\cdot\|_+)$.
\end{remark}

\subsection{The Hilbert Space $\cR(T)$.}\label{ss:thsrt}
In this subsection we recall
a construction and its basic properties
of Hilbert spaces associated to ranges of
general linear operators that was used in \cite{CojGh3} as 
the model that provided the abstract definition of a closed embedding of 
Hilbert spaces.

Let $T$ be a linear operator acting from a Hilbert space
$\cG$ to another Hilbert space $\cH$ and such that its null space $\ker(T)$ is 
closed. Introduce a pre-Hilbert space structure on $\ran(T)$ by the positive 
definite inner product $\langle \cdot,\cdot \rangle_T$ defined by
\begin{equation} \label{e:uvete}
\langle u,v \rangle_T = \langle x,y \rangle_\cG
\end{equation}
for all $u = T x$, $ v = T y$, $ x,y \in \dom (T)$ such that $x,y
\perp \ker (T)$. Let $\cR (T)$ be the completion of the pre-Hilbert
space $\ran (T)$ with respect to the corresponding norm $\| \cdot
\|_T$, where $\| u \|_T^{2}  = \langle u,u \rangle_T$, for $ u \in
\ran (T)$. The inner product and the norm on $\cR(T)$ are
denoted by $\langle \cdot,\cdot \rangle_T$ and, respectively, $\|\cdot\|_T$
throughout.

Further, consider the \emph{embedding operator} $j_T \colon\dom (j_T)
(\subseteq\cR (T)) \ra \cH$ with domain $\dom (j_T) = \ran (T)$ 
defined by
\begin{equation}\label{e:jete}
j_T u = u, \quad u \in \dom (j_T)=\ran(T).\end{equation}

Another way of viewing the definition of the Hilbert space $\cR(T)$
is by means of a certain factorization of $T$.

\begin{lemma}\label{l:cofactorization} Let $T$ be a linear operator
  with domain dense in the Hilbert space  $\cG$, valued in the Hilbert
  space $\cH$, and with closed null space. We consider the Hilbert
  space $\cR(T)$  and the embedding $j_T$ defined as in \eqref{e:uvete} and,
  respectively, \eqref{e:jete}. Then, there exists a unique coisometry
  $U_T\in\cB(\cG,\cR(T))$, such that
  $\ker(U_T)=\ker(T)$ and $T=j_TU_T$.
\end{lemma}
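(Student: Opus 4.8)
The plan is to construct $U_T$ explicitly on the dense domain $\dom(T)$ and then extend it by continuity. For $x\in\dom(T)$ I would set $U_Tx:=Tx$, regarded now as an element of the completion $\cR(T)$ rather than of $\cH$. Writing $x=x_0+x_1$, where $x_0$ is the orthogonal projection of $x$ onto the closed subspace $\ker(T)$ and $x_1\in\ker(T)^\perp$, one has $x_1=x-x_0\in\dom(T)\ominus\ker(T)$ and $Tx=Tx_1$, so by the very definition \eqref{e:uvete} of the norm on $\cR(T)$,
\begin{equation*}
\|U_Tx\|_T=\|Tx_1\|_T=\|x_1\|_\cG\leq\|x\|_\cG.
\end{equation*}
Thus $U_T$ is a contraction on $\dom(T)$, and since $\dom(T)$ is dense in $\cG$ it extends uniquely to an operator $U_T\in\cB(\cG,\cR(T))$.

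The key structural step is to identify $U_T$ as a partial isometry. On $\dom(T)\ominus\ker(T)$ the computation above gives $\|U_Tx\|_T=\|x\|_\cG$, that is, $U_T$ is isometric there, while $U_Tx=Tx=0$ for $x\in\ker(T)$. To upgrade the isometry from this subdomain to all of $\ker(T)^\perp$ I would establish the density claim that $\dom(T)\ominus\ker(T)$ is dense in $\ker(T)^\perp$: if $y\in\ker(T)^\perp$ is orthogonal to $\dom(T)\ominus\ker(T)$, then for every $x\in\dom(T)$ the component $x_1=x-P_{\ker(T)}x$ lies in $\dom(T)\ominus\ker(T)$, whence $\langle y,x\rangle_\cG=\langle y,x_1\rangle_\cG=0$; density of $\dom(T)$ in $\cG$ then forces $y=0$. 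By continuity $U_T$ is isometric on all of $\ker(T)^\perp$ and vanishes on $\ker(T)$, so it is a partial isometry with $\ker(U_T)=\ker(T)$.

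It remains to see that $U_T$ is onto, hence a coisometry. Its range contains $U_T(\dom(T))=T(\dom(T))=\ran(T)$, which is dense in $\cR(T)$ by construction; since $U_T$ is isometric on the closed subspace $\ker(T)^\perp$, the range $\ran(U_T)=U_T(\ker(T)^\perp)$ is complete, hence closed, and a dense closed subspace is everything, so $\ran(U_T)=\cR(T)$ and therefore $U_TU_T^*=I_{\cR(T)}$. The factorization is then immediate: for $x\in\dom(T)$ we have $U_Tx=Tx\in\ran(T)=\dom(j_T)$ and $j_TU_Tx=j_T(Tx)=Tx$, so $T=j_TU_T$ on $\dom(T)$.

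For uniqueness, suppose $V\in\cB(\cG,\cR(T))$ is any coisometry with $T=j_TV$. Since $j_T$ acts as the identity on $\ran(T)\subseteq\cH$ it is injective, so $j_TVx=Tx=j_TU_Tx$ forces $Vx=U_Tx$ for every $x\in\dom(T)$; density of $\dom(T)$ in $\cG$ together with boundedness of both operators gives $V=U_T$. The step I expect to require the most care is the density claim $\overline{\dom(T)\ominus\ker(T)}=\ker(T)^\perp$, since without it one only controls $U_T$ on a proper dense subset of its initial space and cannot conclude that it is genuinely a (co)isometry rather than merely a contraction.
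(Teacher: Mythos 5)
Your proof is correct. The paper itself states this lemma without proof (it is recalled from \cite{CojGh3}), and your construction --- define $U_Tx=Tx$ on $\dom(T)$, check it is contractive via the decomposition $x=x_0+x_1$ along $\ker(T)\oplus\ker(T)^\perp$, extend by density, verify it is isometric on $\ker(T)^\perp$ using the density of $\dom(T)\ominus\ker(T)$ in $\ker(T)^\perp$, and conclude surjectivity from the closedness of the range of an isometry on a complete subspace --- is exactly the natural argument, and you correctly flag the density claim $\overline{\dom(T)\ominus\ker(T)}=\ker(T)^\perp$ as the point requiring care.

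The only step worth tightening is the assertion ``so $T=j_TU_T$ on $\dom(T)$'': what you have verified is the inclusion $T\subseteq j_TU_T$, whereas equality of unbounded operators also requires $\dom(j_TU_T)\subseteq\dom(T)$. This follows in one more line from what you already proved: if $U_Tx\in\ran(T)=\dom(j_T)$, write $x=x_0+x_1$ with $x_0\in\ker(T)$, $x_1\in\ker(T)^\perp$, and $U_Tx=U_Tx_1=Tw$ for some $w\in\dom(T)\ominus\ker(T)$; since $U_T$ is isometric on $\ker(T)^\perp$ and $U_Tx_1=U_Tw$, we get $x_1=w\in\dom(T)$, hence $x=x_0+x_1\in\dom(T)$. (This is precisely the point the paper's Remark~\ref{r:cofactorization} alludes to when it distinguishes $TP_{\overline{\dom(T)}}\subseteq j_TU_T$ from equality.) Your uniqueness argument is fine and in fact shows that the kernel condition $\ker(U_T)=\ker(T)$ is automatic in the densely defined case, consistent with that remark.
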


\begin{remark}\label{r:cofactorization}
The assumption in Lemma~\ref{l:cofactorization} that $T$ is densely defined is
not so important; if this is not the case then
$U_T$ must have a larger null space
only, in order to keep it unique. More precisely,
$\ker(U_T)=\ker(T)\oplus(\cG\ominus \dom(T))$ and,
consequently, $TP_{\overline{\dom(T)}}\subseteq j_TU_T$, 
which turns out to be an equality since $\ker(T)$ is supposed to be a closed 
subspace in $\cG$.
\end{remark}

The most interesting situation, from our point of view, is when the embedding
operator has some closability properties. 

\begin{lemma}\label{l:closed} Let $T$ be an operator densely defined in $\cG$, 
with range in $\cH$, and with closed null space. With the notation as before,
the operator $T$ is closed if and only if the embedding operator $j_T$ is
closed.\end{lemma}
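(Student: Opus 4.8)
The plan is to prove Lemma~\ref{l:closed} by relating the closedness of $j_T$ to that of $T$ through the coisometry $U_T$ furnished by Lemma~\ref{l:cofactorization}. The key factorization $T = j_T U_T$ with $U_T \in \cB(\cG,\cR(T))$ a coisometry (hence bounded and everywhere defined) is the natural bridge: since $U_T$ is a bounded operator, the closedness properties of $T$ and $j_T$ should transfer across this factorization. The main idea is that composing (or ``dividing'') by a bounded coisometry with $\ker(U_T)=\ker(T)$ preserves the relevant graph-closedness.

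First I would prove the forward implication, that closedness of $j_T$ implies closedness of $T$. Let $(x_n)$ be a sequence in $\dom(T)$ with $x_n \to x$ in $\cG$ and $Tx_n \to y$ in $\cH$. Writing $T = j_T U_T$, set $u_n = U_T x_n \in \ran(T) = \dom(j_T)$. Since $U_T$ is bounded, $u_n \to U_T x =: u$ in $\cR(T)$, and $j_T u_n = T x_n \to y$ in $\cH$. Because $j_T$ is closed, we conclude $u \in \dom(j_T) = \ran(T)$ and $j_T u = y$. The task then is to recover $x \in \dom(T)$ with $Tx = y$; here I would use that $U_T$ is a coisometry with $\ker(U_T)=\ker(T)$, so that on $\cG \ominus \ker(T)$ the map $U_T$ restricts to an isometry onto $\cR(T)$, allowing one to pull $u$ back to a vector in $\dom(T) \ominus \ker(T)$ that maps to $y$ under $T$.

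For the converse, assuming $T$ closed, I would take a sequence $(u_n)$ in $\dom(j_T) = \ran(T)$ with $u_n \to u$ in $\cR(T)$ and $j_T u_n = u_n \to v$ in $\cH$, and must show $u \in \ran(T)$ with $u = v$. Each $u_n = T x_n$ for a unique $x_n \in \dom(T) \ominus \ker(T)$, and by the definition \eqref{e:uvete} of the $\cR(T)$-norm we have $\|x_n - x_m\|_\cG = \|u_n - u_m\|_T$, so $(x_n)$ is Cauchy in $\cG$, say $x_n \to x$. Then $Tx_n = u_n \to v$ in $\cH$, and closedness of $T$ gives $x \in \dom(T)$ with $Tx = v$; identifying $v = u$ in $\cR(T)$ completes the argument.

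The main obstacle I anticipate is the careful bookkeeping between the two norms, $\|\cdot\|_T$ on $\cR(T)$ and $\|\cdot\|_\cH$ on $\cH$, and in particular ensuring that convergence in $\cR(T)$ and convergence in $\cH$ are correctly coordinated through the embedding $j_T$. One must not conflate the limit of $u_n$ taken in $\cR(T)$ with its limit taken in $\cH$; the whole point of closedness is precisely that these two \emph{a priori} distinct limits can be identified once $u \in \ran(T)$. Handling the quotient by $\ker(T)$ cleanly, via the coisometry $U_T$, is what makes both directions symmetric and avoids this pitfall.
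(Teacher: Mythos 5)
Your proposal is correct. Note that the paper itself states Lemma~\ref{l:closed} without proof, deferring to \cite{CojGh3}; the closest in-text analogue is the proof of the dual statement, Lemma~\ref{l:iclosed}, which is a direct graph-norm argument (the pair sequence $((x_n,Tx_n))$ is Cauchy in the graph norm, then closedness of $T$ is invoked). Your converse direction is essentially that same direct argument transplanted to $\cR(T)$: the isometry $\|x_n-x_m\|_\cG=\|u_n-u_m\|_T$ turns $\cR(T)$-convergence into $\cG$-convergence of the preimages, and closedness of $T$ finishes. Your forward direction instead routes through the coisometry $U_T$ of Lemma~\ref{l:cofactorization}, which is a genuinely different and rather clean device: boundedness of $U_T$ transports the $\cG$-limit into $\cR(T)$ for free, and the quotient by $\ker(T)$ is handled structurally rather than by hand. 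Two small steps deserve to be made explicit. In the forward direction, after concluding $u\in\ran(T)$ and pulling it back to $z\in\dom(T)\ominus\ker(T)$ with $Tz=y$, you must use the injectivity of $U_T$ on $\cG\ominus\ker(T)$ to identify $z$ with the component of $x$ orthogonal to $\ker(T)$; only then does $x=z+P_{\ker(T)}x\in\dom(T)$ and $Tx=y$ follow. In the converse, the identification $u=v$ is not automatic: it follows because the limit $x$ of the $x_n$ lies in the closed subspace $\ker(T)^\perp$, hence in $\dom(T)\ominus\ker(T)$, so $\|u_n-v\|_T=\|x_n-x\|_\cG\to 0$ and $v$ is also the $\cR(T)$-limit of $(u_n)$. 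With these two points spelled out, both implications are complete.
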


We denote by $\cC(\cH,\cG)$ the collection of all operators $T$ that are closed 
and densely defined from $\cH$ and valued in $\cG$.
The following lemma is a direct consequence of Lemma~\ref{l:cofactorization} 
and Lemma~\ref{l:closed}.

\begin{lemma}\label{l:domtestar} Let $T\in\cC(\cH,\cG)$. Then $\dom(j_T^*)
\supseteq\dom(T^*)$. If, in addition, $T$ is one-to-one, then $\dom(j_T^*)
=\dom(T^*)$
\end{lemma}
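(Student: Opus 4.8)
The plan is to derive the statement from the cofactorization $T=j_TU_T$ provided by Lemma~\ref{l:cofactorization}, together with the fact, established in Lemma~\ref{l:closed}, that $j_T$ is a closed densely defined operator whenever $T$ is. Recall that $U_T\in\cB(\cH,\cR(T))$ is a coisometry with $\ker(U_T)=\ker(T)$, that it is everywhere defined and bounded, and that its restriction to $\dom(T)\ominus\ker(T)$ sends each $x$ to $Tx$, regarded as an element of $\cR(T)$, with $\|U_Tx\|_T=\|Tx\|_T=\|x\|_\cH$. Since $j_T$ is closed and densely defined, $j_T^*$ is a well-defined densely defined operator, and $y\in\dom(j_T^*)$ precisely when $u\mapsto\langle j_Tu,y\rangle_\cG$ is bounded on $\dom(j_T)=\ran(T)$ in the norm $\|\cdot\|_T$.

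First I would prove the inclusion $\dom(j_T^*)\supseteq\dom(T^*)$ directly, without any injectivity hypothesis. Fix $y\in\dom(T^*)$. Each $u\in\dom(j_T)=\ran(T)$ can be written as $u=Tx$ for a unique $x\in\dom(T)\ominus\ker(T)$, and then $\|u\|_T=\|x\|_\cH$ by the very definition~\eqref{e:uvete} of the norm on $\cR(T)$. Using $j_Tu=u$ and $y\in\dom(T^*)$, I compute
\begin{equation*}
\langle j_Tu,y\rangle_\cG=\langle Tx,y\rangle_\cG=\langle x,T^*y\rangle_\cH,
\end{equation*}
whence $|\langle j_Tu,y\rangle_\cG|\leq\|x\|_\cH\,\|T^*y\|_\cH=\|u\|_T\,\|T^*y\|_\cH$. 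Thus the functional $u\mapsto\langle j_Tu,y\rangle_\cG$ is bounded on $\dom(j_T)$, so $y\in\dom(j_T^*)$, establishing the desired inclusion.

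For the reverse inclusion under the assumption that $T$ is one-to-one, I would exploit that $\ker(T)=0$ forces $U_T$ to be a unitary operator (an everywhere defined, boundedly invertible isometry of $\cH$ onto $\cR(T)$), so that $\dom(T)\ominus\ker(T)=\dom(T)$ and $U_T^*=U_T^{-1}\in\cB(\cR(T),\cH)$. Because the inner factor $U_T$ in $T=j_TU_T$ is bounded and everywhere defined, the adjoint of the product is computed cleanly as $T^*=U_T^*j_T^*$; taking $y\in\dom(j_T^*)$ and writing, for $x\in\dom(T)$, $\langle Tx,y\rangle_\cG=\langle j_TU_Tx,y\rangle_\cG=\langle x,U_T^*j_T^*y\rangle_\cH$, shows that $y\in\dom(T^*)$. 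Since $U_T^*$ is everywhere defined, $\dom(U_T^*j_T^*)=\dom(j_T^*)$, and combining with the first inclusion yields $\dom(j_T^*)=\dom(T^*)$. The point requiring care throughout is the handling of the adjoint of the product $j_TU_T$, whose outer factor $j_T$ is merely closed and unbounded: the whole argument hinges on the inner factor $U_T$ being bounded and everywhere defined (and, in the one-to-one case, boundedly invertible), which is exactly what legitimizes passing between $T^*$ and $U_T^*j_T^*$ and thereby identifying the two domains.
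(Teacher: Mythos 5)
Your argument is correct and is essentially the proof the paper intends: the lemma is stated there as a direct consequence of Lemma~\ref{l:cofactorization} and Lemma~\ref{l:closed}, and you have simply made this explicit, getting the inclusion $\dom(T^*)\subseteq\dom(j_T^*)$ from the estimate $|\langle Tx,y\rangle_\cG|\leq\|x\|_\cH\,\|T^*y\|_\cH=\|u\|_T\,\|T^*y\|_\cH$ and the reverse inclusion from the factorization $T=j_TU_T$. (Incidentally, your computation $\langle Tx,y\rangle_\cG=\langle x,U_T^*j_T^*y\rangle_\cH$ uses only that the coisometry $U_T$ is bounded and everywhere defined with $U_Tx\in\dom(j_T)$ for $x\in\dom(T)$, so it actually yields $\dom(j_T^*)\subseteq\dom(T^*)$ without the injectivity hypothesis; unitarity of $U_T$ is needed only for the stronger operator identity $T^*=U_T^*j_T^*$ that you invoke.)
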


We also recall an
extension of a characterization of operator ranges due to Yu.L.~Shmulyan \cite{Shm}
and similar results of L.~de~Branges and J.~Rovnyak \cite{deBraRov},
to the case of closed densely defined operators between Hilbert spaces, cf.\  
\cite{CojGh3}.

\begin{theorem}\label{t:range}
Let $T\in\cC(\cG,\cH)$ be nonzero and $u\in\cH$. Then
  $u\in \ran(T)$ if and only if there exists $\mu_u\geq 0$ such that
  $|\langle u,v\rangle_\cH| \leq \mu_u \|T^*v\|_\cG$ for all
  $v\in\dom(T^*)$. Moreover, if $u\in \ran(T)$ then
\begin{equation*} \|u\|_T=\sup\bigr\{ \frac{|\langle
  u,v\rangle_\cH|}{\|T^*v\|_\cG}  \mid
  v\in\dom(T^*),\ T^*v\neq 0\bigl\},\end{equation*}
where $\|\cdot\|_T$ is the norm associated to the inner product defined
as in \eqref{e:uvete}.
\end{theorem}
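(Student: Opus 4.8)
The plan is to prove both implications of the characterization of $\ran(T)$ and then derive the variational formula for the norm. I would start with the easy direction: suppose $u\in\ran(T)$, so $u=Tx$ for some $x\in\dom(T)$, and without loss of generality $x\perp\ker(T)$. Then for every $v\in\dom(T^*)$ we have $\langle u,v\rangle_\cH=\langle Tx,v\rangle_\cH=\langle x,T^*v\rangle_\cG$, and so by the Cauchy-Schwarz inequality $|\langle u,v\rangle_\cH|\leq\|x\|_\cG\|T^*v\|_\cG$. Thus one may take $\mu_u=\|x\|_\cG=\|u\|_T$, which already foreshadows the norm formula.

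For the converse, assume there exists $\mu_u\geq 0$ with $|\langle u,v\rangle_\cH|\leq\mu_u\|T^*v\|_\cG$ for all $v\in\dom(T^*)$. The idea is to build the preimage $x$ by a Riesz-representation argument on $\ran(T^*)$. Since $T$ is closed and densely defined, $T^*$ is also closed and densely defined, and the inequality shows that the map $T^*v\mapsto\langle v,u\rangle_\cH$ is well defined (if $T^*v=0$ then the right-hand side forces $\langle u,v\rangle_\cH=0$) and bounded on $\ran(T^*)$ with norm at most $\mu_u$. Hence it extends by continuity to $\overline{\ran(T^*)}$ and vanishes on the orthogonal complement, so by the Riesz representation theorem there is a unique $x\in\overline{\ran(T^*)}=\cG\ominus\ker(T)$ with $\langle x,T^*v\rangle_\cG=\langle v,u\rangle_\cH$ for all $v\in\dom(T^*)$ and $\|x\|_\cG\leq\mu_u$. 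Reading this as $\langle x,T^*v\rangle_\cG=\langle u,v\rangle_\cH$ for all $v\in\dom(T^*)$ says precisely that $x\in\dom(T^{**})=\dom(T)$ (using that $T$ is closed, so $T^{**}=T$) and $Tx=u$, whence $u\in\ran(T)$.

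The main obstacle I expect is identifying the representing vector $x$ as lying in $\dom(T)$ rather than merely in $\cG$: this is exactly the point where closedness of $T$ is indispensable, since the defining relation $\langle x,T^*v\rangle_\cG=\langle u,v\rangle_\cH$ for all $v\in\dom(T^*)$ is the definition of $x\in\dom(T^{**})$ with $T^{**}x=u$, and only the identity $T=T^{**}$ converts this into the desired $x\in\dom(T)$ and $Tx=u$. For the norm formula, I would finish by combining the two directions: the easy direction gives $\|u\|_T\leq\mu_u$ for any admissible $\mu_u$, hence $\|u\|_T$ is at most the infimum of such $\mu_u$, which equals the supremum $\sup\{|\langle u,v\rangle_\cH|/\|T^*v\|_\cG\mid v\in\dom(T^*),\ T^*v\neq 0\}$; conversely the Riesz construction produces $x$ with $\|x\|_\cG$ equal to that supremum, and since $x\perp\ker(T)$ and $Tx=u$ we have $\|u\|_T=\|x\|_\cG$ by the very definition \eqref{e:uvete} of the $\cR(T)$-norm. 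Chasing these inequalities in both directions pins $\|u\|_T$ to the claimed supremum.
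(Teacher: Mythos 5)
Your argument is correct, and it is the standard duality proof of this Shmulyan--de Branges--Rovnyak type result: the paper itself states Theorem~\ref{t:range} without proof (citing \cite{Shm}, \cite{deBraRov} and \cite{CojGh3}), and the route you take --- Cauchy--Schwarz for the forward direction, then a Riesz-representation argument on $\overline{\ran(T^*)}=\cG\ominus\ker(T)$ combined with $T^{**}=T$ for the converse and for pinning down the norm --- is exactly the expected one. The only blemish is a cosmetic conjugation slip: the functional $T^*v\mapsto\langle v,u\rangle_\cH$ is the linear one, and Riesz gives $\langle T^*v,x\rangle_\cG=\langle v,u\rangle_\cH$, whose conjugate is the relation $\langle x,T^*v\rangle_\cG=\langle u,v\rangle_\cH$ you actually use; also note that the hypothesis that $T$ is nonzero (hence $T^*\neq 0$) is what guarantees the supremum in the norm formula is taken over a nonempty set.
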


Let us observe that the definition of closely embedded Hilbert spaces is 
consistent with the model $\cR(T)$, for $T\in \cC(\cG,\cH)$, more precisely,  
if $\cH_+$ is closely embedded in $\cH$ then $\cR(j_+)=\cH_+$ and 
$\|x\|_+=\|x\|_{j_+}$.

The model for the abstract definition of closely embedded Hilbert spaces 
follows the results on the Hilbert space $\cR(T)$.
Thus, if
$T\in\cC(\cG,\cH)$ then the Hilbert space $\cR(T)$, with its canonical
embedding $j_T$ as defined in \eqref{e:uvete} and \eqref{e:jete}, is a Hilbert
space closely embedded in $\cH$, e.g.\ by
Lemma~\ref{l:closed}. Conversely, if $\cH_+$ is a Hilbert space closely
embedded in $\cH$, and $j_+$ denotes its canonical closed embedding, then
$\cH_+$ can be naturally viewed as the Hilbert space of type $\cR(j_+)$. This
fact is actually more general.

\begin{proposition}\label{p:tetestar}
Let $T\in\cC(\cG,\cH)$ and consider the Hilbert space
  $\cR(T)$ closely embedded in $\cH$,
with its canonical closed embedding $j_T$. Then $TT^*=j_Tj_T^*$.
\end{proposition}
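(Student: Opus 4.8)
The plan is to exploit the coisometric factorization $T=j_TU_T$ furnished by Lemma~\ref{l:cofactorization}, in which $U_T\in\cB(\cG,\cR(T))$ is a coisometry, so that $U_TU_T^*=I_{\cR(T)}$, and then to push the adjoint and this identity through the product $TT^*$. Formally one expects
\[ TT^*=(j_TU_T)(j_TU_T)^*=j_T(U_TU_T^*)j_T^*=j_Tj_T^*, \]
and the entire difficulty lies in making each of these equalities legitimate for unbounded operators, i.e.\ in controlling the domains.

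First I would establish the genuine operator identity $T^*=U_T^*j_T^*$. Since $U_T$ is everywhere defined and bounded, we have $\dom(T)=\{g\in\cG\mid U_Tg\in\dom(j_T)\}$, and for every $h\in\dom(j_T^*)$ and every $x\in\dom(T)$ one computes
\[ \langle Tx,h\rangle_\cH=\langle j_TU_Tx,h\rangle_\cH=\langle U_Tx,j_T^*h\rangle_{\cR(T)}=\langle x,U_T^*j_T^*h\rangle_\cG, \]
where $U_Tx\in\dom(j_T)$ justifies the middle step. This shows $h\in\dom(T^*)$ with $T^*h=U_T^*j_T^*h$, that is, $U_T^*j_T^*\subseteq T^*$ and in particular $\dom(j_T^*)\subseteq\dom(T^*)$. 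For the reverse inclusion I would invoke Lemma~\ref{l:domtestar}, which supplies $\dom(T^*)\subseteq\dom(j_T^*)$. Combining the two gives $\dom(T^*)=\dom(j_T^*)$, and since the two operators already agree on this common domain, we obtain $T^*=U_T^*j_T^*$ as operators.

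The final step is then pure bookkeeping with $T=j_TU_T$, $T^*=U_T^*j_T^*$, and $U_TU_T^*=I_{\cR(T)}$. A vector $h$ lies in $\dom(TT^*)$ iff $h\in\dom(T^*)=\dom(j_T^*)$ and $T^*h=U_T^*j_T^*h\in\dom(T)$; but $U_T^*j_T^*h\in\dom(T)$ means $U_TU_T^*j_T^*h\in\dom(j_T)$, which by $U_TU_T^*=I_{\cR(T)}$ is exactly the condition $j_T^*h\in\dom(j_T)$, i.e.\ $h\in\dom(j_Tj_T^*)$. Hence $\dom(TT^*)=\dom(j_Tj_T^*)$, and on this domain $TT^*h=T(U_T^*j_T^*h)=j_TU_TU_T^*j_T^*h=j_Tj_T^*h$, which is the asserted equality.

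The step I expect to be the real obstacle is precisely the operator (not merely formal) identity $T^*=U_T^*j_T^*$: the inclusion $U_T^*j_T^*\subseteq T^*$ is automatic from the boundedness of $U_T$, but the matching of domains $\dom(T^*)=\dom(j_T^*)$ is the delicate point, and it is exactly what Lemma~\ref{l:domtestar} is designed to deliver. Once that is secured, the boundedness of $U_T$ together with the coisometry relation $U_TU_T^*=I_{\cR(T)}$ lets the intermediate factor collapse without perturbing any domain, and the conclusion follows.
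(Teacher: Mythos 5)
Your proof is correct. The paper itself states Proposition~\ref{p:tetestar} without proof (it is recalled from \cite{CojGh3}), and your argument is exactly the one the surrounding machinery is designed to support: the factorization $T=j_TU_T$ of Lemma~\ref{l:cofactorization} gives $U_T^*j_T^*\subseteq T^*$ by direct computation, Lemma~\ref{l:domtestar} supplies the reverse domain inclusion, and the coisometry relation $U_TU_T^*=I_{\cR(T)}$ collapses the middle factor while the identity $\dom(T)=\{g\mid U_Tg\in\dom(j_T)\}$ (which is precisely what the operator equality $T=j_TU_T$ asserts) keeps the domains matched. As a side observation, your two inclusions together show $\dom(j_T^*)=\dom(T^*)$ with no injectivity hypothesis on $T$, slightly sharpening the statement of Lemma~\ref{l:domtestar}.
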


As in the case of continuous embeddings, one can
prove that Hilbert spaces that are closely embedded in a given Hilbert
space are uniquely determined by their kernel operators, but the
uniqueness takes a slightly weaker form. This is illustrated by the following
theorem.

\begin{theorem}\label{t:kernel} Let $\cH_+$ be a Hilbert space
closely embedded in $\cH$, with $j_+ : \cH_+ \ra
\cH$ its densely defined and closed embedding operator, and let $A
= j_+ j_+^*$ be the kernel operator of $\cH_+$.
Then

\nr{a} $\ran(A^{1/2})=\dom(j_+)$ is dense in both $\cR(A^{1/2})$ and $\cH_+$.

\nr{b} For all $x\in\ran(A^{1/2})$ and all $y\in \dom(A)$ we have $\langle
x,y\rangle_\cH=\langle x, Ay\rangle_+=\langle
x,Ay\rangle_{A^{1/2}}$.

\nr{c} $\ran(A)$ is dense in both $\cR (A^{1/2})$ and $\cH_+$.

\nr{d} For any $x\in\dom(j_+)$ we have
\begin{equation*} \|x\|_+=\sup\bigl\{ \frac{|\langle
    x,y\rangle_\cH|}{\|A^{1/2}y\|_\cH} \mid y\in\dom(A^{1/2}),\ A^{1/2}y\neq
    0\bigr\}.
\end{equation*}

\nr{e} The identity operator $:\ran(A))(\subseteq\cR (A^{1/2})) \ra
\cH_+$ uniquely extends to a unitary operator $V:\cR(A^{1/2})\ra \cH_+$
such that $
V A x = j_+^* x$, for all $ x \in \dom(A)$.
 \end{theorem}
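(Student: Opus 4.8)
The plan is to prove Theorem~\ref{t:kernel} by systematically exploiting the factorization $A = j_+j_+^*$ together with the fact, established in Subsection~\ref{ss:thsrt}, that the closely embedded space $\cH_+$ can be identified with a Hilbert space of type $\cR(j_+)$. The central idea is that $A^{1/2}$ and $j_+$ are intertwined in a polar-decomposition fashion: since $A=j_+j_+^*$, one expects $A^{1/2}$ to differ from $j_+$ only by a partial isometry, and this is exactly what drives the unitary identification in part (e). I would carry out the proof in the order (a)$\to$(b)$\to$(c)$\to$(d)$\to$(e), since each part feeds the next.

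First I would prove (a). By the very construction of $\cR(T)$ in Lemma~\ref{l:cofactorization} applied to $T=A^{1/2}$ (a closed, densely defined, nonnegative operator), there is a coisometry $U$ with $A^{1/2}=j_{A^{1/2}}U$; one then checks $\ran(A^{1/2})=\dom(j_+)$ directly by comparing the two factorizations of $A$. The key computation is that for $x\in\dom(A)$ one has $\|A^{1/2}x\|_\cH^2=\langle Ax,x\rangle_\cH=\langle j_+j_+^*x,x\rangle_\cH=\|j_+^*x\|_+^2$, which pins down the norm identification and shows $\ran(A^{1/2})$ is dense in both completions (density in $\cH_+$ follows because $\dom(j_+)$ is dense in $\cH_+$ by axiom (ceh1)). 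For part (b), I would simply expand $\langle x,Ay\rangle_+$ using $A=j_+j_+^*$ and the defining relation $\langle j_+u,v\rangle_\cH=\langle u,j_+^*v\rangle_+$, reducing everything to $\langle x,y\rangle_\cH$; the second equality $\langle x,Ay\rangle_+=\langle x,Ay\rangle_{A^{1/2}}$ then follows from the norm identification in (a). Part (c) is a density argument: $\ran(A)=A^{1/2}\ran(A^{1/2})$, and since $\ran(A^{1/2})$ is already dense (part (a)) and $A^{1/2}$ has dense range with trivial kernel, $\ran(A)$ inherits density in both spaces.

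Part (d) is where Theorem~\ref{t:range} does the heavy lifting. Applying that variational characterization to the operator $T=A^{1/2}$, for $u\in\ran(A^{1/2})=\dom(j_+)$ I would get
\begin{equation*}
\|u\|_{A^{1/2}}=\sup\Bigl\{\frac{|\langle u,v\rangle_\cH|}{\|A^{1/2}v\|_\cH}\mid v\in\dom(A^{1/2}),\ A^{1/2}v\neq 0\Bigr\},
\end{equation*}
and then invoke the norm identity $\|u\|_+=\|u\|_{A^{1/2}}$ from (a) to convert this into the stated formula for $\|u\|_+$. The only subtlety is checking $A$ is nonzero so that Theorem~\ref{t:range} applies, and confirming the supremum over $\dom(A^{1/2})$ matches the supremum appearing in the range theorem for $A^{1/2}$.

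Finally, for part (e) I would define $V$ on the dense manifold $\ran(A)$ as the identity map into $\cH_+$ and show it is isometric using (b): for $x\in\dom(A)$, $\|Ax\|_{A^{1/2}}^2=\langle Ax,Ax\rangle_{A^{1/2}}=\langle Ax,Ax\rangle_+$, and one reduces this via (b) to $\langle Ax,x\rangle_\cH=\|j_+^*x\|_+^2$, giving $\|Ax\|_{A^{1/2}}=\|j_+^*x\|_+$. This establishes that the identity on $\ran(A)$ extends to an isometry $V\colon\cR(A^{1/2})\ra\cH_+$ satisfying $VAx=j_+^*x$; surjectivity (hence unitarity) follows because $\ran(j_+^*)$ is dense in $\cH_+$ and $V$ has dense range by part (c). The main obstacle I anticipate is bookkeeping the two distinct norms $\|\cdot\|_+$ and $\|\cdot\|_{A^{1/2}}$ on the overlapping dense manifolds and making the domain inclusions precise—the conceptual content is the single identity $\|j_+^*x\|_+^2=\langle Ax,x\rangle_\cH$, but one must be careful that all the manifolds ($\ran(A)$, $\ran(A^{1/2})$, $\dom(j_+)$) are being compared inside the correct completion before taking limits.
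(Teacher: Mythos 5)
The paper itself does not prove Theorem~\ref{t:kernel}: it is recalled in the preliminaries as a known result (from \cite{CojGh3}), so there is no in-text proof to compare against. Judged on its own, your plan follows what is surely the intended route: the single identity $\|A^{1/2}y\|_{\cH}=\|j_+^*y\|_+$ on $\dom(A)$ (coming from $A=j_+j_+^*=(j_+^*)^*(j_+^*)$ and polar decomposition), the variational formula of Theorem~\ref{t:range}, and an isometric-extension argument on the dense manifold $\ran(A)$ for part (e). The logical order (a)$\to$(e) and the way each part feeds the next are sound, and the final isometry computation for $V$ is correct.

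Two points deserve correction or tightening. First, in part (c) you assert that $A^{1/2}$ has trivial kernel; this does not follow from the hypotheses. Indeed $\ker(A)=\ker(j_+^*)=\ran(j_+)^{\perp}$, and the definition of a closed embedding ((ceh1)--(ceh2)) does \emph{not} require $\ran(j_+)$ to be dense in $\cH$ --- that is an additional axiom imposed only later, in (th1), for triplets. Your density argument survives nevertheless, because the $\cR(A^{1/2})$-norm is defined on representatives orthogonal to $\ker(A^{1/2})$ and $\ran(A^{1/2})\subseteq\ker(A^{1/2})^{\perp}$ automatically; but the claim as written is false in general and should not be used. Second, for part (d) you apply Theorem~\ref{t:range} to $T=A^{1/2}$ and then ``invoke the norm identity $\|u\|_+=\|u\|_{A^{1/2}}$ from (a)''; what your (a) actually establishes is the adjoint-side identity $\|A^{1/2}x\|_{\cH}=\|j_+^*x\|_+$, which is equivalent to $\|u\|_+=\|u\|_{A^{1/2}}$ on $\ran(A^{1/2})=\dom(j_+)$ only after another pass through the polar decomposition $j_+=A^{1/2}W^*$ (and uses injectivity of $j_+$ to see that the final space of $W$ is all of $\cH_+$). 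A cleaner route, consistent with the paper's remark that $\cH_+=\cR(j_+)$ with $\|\cdot\|_+=\|\cdot\|_{j_+}$, is to apply Theorem~\ref{t:range} directly to $T=j_+$, which gives $\|x\|_+=\sup|\langle x,v\rangle_{\cH}|/\|j_+^*v\|_+$ over $v\in\dom(j_+^*)=\dom(A^{1/2})$, and then substitute $\|j_+^*v\|_+=\|A^{1/2}v\|_{\cH}$ --- exactly the identity you already have. With these repairs the argument is complete.
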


\section{A Model of a Triplet of Closely Embedded Hilbert Spaces} 
\label{s:model}

In this section we develop a 
construction of a chain of two closed embeddings with certain duality properties 
related to a given positive selfadjoint operator with trivial null space, as a 
generalization of the classical notion of a triplet of Hilbert spaces. This 
construction will lead us to the axiomatization of triplets of
closely embedded Hilbert spaces and will be essential in applications. 
Let $\cH$ be a Hilbert space and $H$ a positive selfadjoint operator in 
$\cH$, that we call the Hamiltonian. We assume that $H$ has trivial null space. 
Let $\cG$ be another Hilbert space and let 
$T\in\cC(\cH,\cG)$ be such that it provides a factorization of the Hamiltonian
\begin{equation}\label{e:hamfact} H=T^*T.
\end{equation} Then $T$ has trivial null space as well, and let $T^{-1}$ denote the 
algebraic inverse operator of $T$, that is, $\dom(T^{-1})=\ran(T)$.
We consider the Hilbert space $\cD(T)$ as described in 
Subsection~\ref{ss:sdt}, more precisely, in our special case $\cD(T)$ is the Hilbert 
space completion of $\dom(T)$ with respect to the quadratic norm 
$|\cdot|_T$ defined 
as in \eqref{e:normad}, and the associated inner product $(\cdot,\cdot)_T$. 
The closed embedding $i_T$, defined as in \eqref{e:iote}, has 
domain $\dom(T)$ dense in $\cD(T)$ and range in $\cH$. Observe that, without 
loss of generality, we can assume that $T$ has dense range (otherwise, replace 
$\cG$ by 
the closure of $\ran(T)$). For example, all these assumptions are met when 
$T=H^{1/2}$, and uniqueness modulo unitary equivalence holds as well, 
but having in mind future applications we want to keep this level of generality. 

Throughout this section we keep the following two assumptions on 
$T$: \emph{$\ker(T)=\{0\}$ and $\ran(T)$ is dense in $\cG$}.
As mentioned in Subsection~\ref{ss:sdt}, 
the kernel operator $A$ of the closed embedding $i_T$ is a positive selfadjoint
operator in $\cH$ 
\begin{equation}\label{e:aiet}A=i_Ti_T^*=j_{T^{-1}} j_{T^{-1}}^*=T^{-1}
{T^{-1}}^*=(T^*T)^{-1}
\end{equation} hence, in accordance with \eqref{e:hamfact}, $H=T^*T=A^{-1}$; the 
kernel operator is the inverse of the Hamiltonian, in the sense of one-to-one 
unbounded operators. 

In the following we use Lemma~\ref{l:cofactorization}. Thus, we have the 
coisometry $V_T\in\cB(\cG,\cD(T))$, uniquely determined such that 
$T^{-1}=i_TV_T$ and $\ker(V_T)=\cG\ominus\ran(T)$. 
Due to our assumption that $\ran(T)$ is dense in $\cG$, 
the operator $V_T$ is actually unitary. Similarly, there exists a coisometry
$U_{T^*}\in\cB(\cG,\cR(T^*)$ such that $T^*=j_{T^*}U_{T^*}$, uniquely determined 
by the property $\ker(U_{T^*})=\ker(T^*)$. Again, since $\ran(T)$ is supposed 
to be dense in $\cG$, it follows that $U_{T^*}$ is actually unitary.

The kernel 
operator $B$ of the closed embedding of $\cH$ in $\cR(T^*)$ is
\begin{equation}\label{e:kerbe} B=j_{T^*}^{-1}
{j_{T^*}^{-1}}^*=(j_{T^*}^*j_{T^*})^{-1}.
\end{equation} On the other hand, since 
$T^*=j_{T^*}U_{T^*}$, where $U_{T^*}\colon \cG\ra\cR(T^*)$ is unitary, it follows
that
\begin{equation*} TT^*=U_{T^*}^* j_{T^*}^* j_{T^*}U_{T^*},
\end{equation*} which, when combined with \eqref{e:kerbe}, shows that 
\begin{equation}\label{e:beiet}(TT^*)^{-1}=
U_{T^*}^* (j_{T^*}^*j_{T^*})^{-1}U_{T^*}=U_{T^*}^*BU_{T^*}.
\end{equation} Since, via the polar decomposition for the closed densely defined 
operator $T$, the operators $TT^*$ and $T^*T$ are unitary equivalent, from 
\eqref{e:aiet} and \eqref{e:beiet} it follows that 
the two kernel operators $A$ and $B$ are unitary equivalent.

Further on, consider the unitary operator $U_{T^*}V_T^{-1}$, acting between 
$\cD(T)$ and $\cR(T^*)$, and denote this operator by $\widetilde H$. Then, 
$\widetilde H$ is an extension of the 
Hamiltonian operator $H$ and its inverse, that we denote by $\widetilde A$, 
is an extension of the 
kernel operator $A$. Indeed, this follows from the fact that 
$T^*T=j_{T^*}U_{T^*}V_{T}^{-1}i_T^{-1}$, and then taking into account of 
\eqref{e:hamfact}, and the fact that both $j_{T^*}$ and $i_T$ are 
closed embeddings.

Let us observe now that the kernel operator can be viewed as an operator acting
from $\cR(T^*)$ and valued in $\cD(T)$. Indeed, taking into account \eqref{e:aiet},
$\dom(A)=\dom(i_Ti_T^*)\subseteq \ran(T^*)\subseteq \cR(T^*)$ and 
$\ran(A)\subseteq \dom(T)\subseteq\cD(T)$. Since $H=A^{-1}$, it follows that
the Hamiltonian operator $H$ can be viewed as acting from $\cD(T)$ and valued
in $\cR(T^*)$. 

In the following we show that the operator $H$, 
when viewed as an operator acting
from $\cD(T)$ and valued in $\cR(T^*)$, is densely defined and has dense range.
Indeed, in order to prove that the domain of $H$ is dense in $\cD(T)$ it is sufficient 
(actually, equivalent) to proving that $\ran(A)$ is dense in $\cD(T)$. To see this, let 
$x\in \cD(T)$ be such that $(x,Ay)_T=0$ for all $y\in\dom(A)$. We first prove that 
$(x,i_T^*)_T=0$ for all $y\in\dom(i_T^*)$. Indeed, since $A=i_Ti_T^*$, it follows 
that $\dom(A)$ is a core for $i_T^*$, hence, for any $y\in\dom(i_T^*)$ there exists
a sequence $(y_n)$ of vectors in $\dom(A)$ such that $\|y_n-y\|_\cH\ra 0$ and 
$|i_T^*y-i_T^* y_n|_T\ra 0$ as $n\ra\infty$. Consequently, 
$0=(x,Ay_n)_T=(x,i_T^*y_n)_T\ra (x,i_T^*y)_T$ as $n\ra\infty$, hence 
$(x,i_T^*y)_T=0$. Since $y$ is arbitrary in $\dom(i_T^*)$ and $\ran(i_T^*)$ is 
dense in $\cD_T$, it follows that $x=0$. Thus, $\ran(A)=\dom(H)$ is dense in 
$\cD(T)$. In a completely similar fashion, by using $j_{T^*}$ instead of $i_T$ and 
taking into account that $H=T^*T$, we prove that $\ran(H)$ is dense in $\cR(T^*)$.

The construction we got so far 
can be visualized by the compound diagram in Figure 1, where 
all the triangular diagrams are commutative, by definition, while the rectangular 
diagram is commutative in the weaker sense $j_{T^*}\widetilde H\supseteq H i_T$.
\medskip

\hfil\xymatrix{
\ \cG\ \ar[d]_{V_T} \ar@{<-}[rd]^{T} & &\ \cG\ \ar[ld]_{T^*} 
\ar[d]^{U_{T^*}} & \\
\ \cD(T)\ \ar[r]^{i_T} &\ \cH\ \ar@<-0.5ex>[d]^{\ H=A^{-1}}  \ar[dl]_{i_T^*} 
\ar@<0.5ex>[r]_{j_{T^*}}&\ \cR(T^*)\ \ar@{-->}[l]_{j_{T^*}^{-1}} \ar@<-0.5ex>[d]_{\widetilde A} & \\
\ \cD(T)\ \ar[r]_{i_T} &\ \cH\ \ar@<0.5ex>[r]_{i_{T}} \ar@<-0.5ex>[u]^{A\ } &\ \cD(T)\ 
\ar@<-0.5ex>[u]_{\widetilde H} \ar@{-->}[l]_{i_{T}^{-1}} &\ \cG\ \ar[lu]_{U_{T^*}} 
\ar[l]_{V_T}
}\hfill

\smallskip\centerline{Figure 1.}\medskip

Let us observe now that, as a consequence of 
Theorem~\ref{t:range} when applied to $T^*$ instead of $T$,
for all $y\in\dom(T^*)$ we have the following variational formula
\begin{equation}\label{e:varnorm}
\|y\|_{T^*}=\sup\bigl\{
\frac{|\langle y,x\rangle_\cH|}{|x|_T}\mid x\in\dom(T)
\setminus\{0\}\bigr\}.
\end{equation}

Finally, we show that there is a canonical identification of $\cR(T^*)$ with the  
conjugate dual space $\cD(T)^*$. To see this, we define a linear operator 
\begin{equation}\label{e:theta}
\Theta \colon \cR(T^*)\ra\cD(T)^*,\quad (\Theta\alpha)(x):=(\widetilde A\alpha,x)_T,
\quad \alpha\in\cR(T^*),\ x\in\cD(T),
\end{equation} and, taking into account that $\widetilde A$ is unitary it follows 
that $\Theta$ is unitary as well.

We summarize all the previous constructions and facts in the following

\begin{theorem}\label{t:model} Let $H$ be a positive selfadjoint operator in the Hilbert 
space $\cH$, with trivial null space. Let $T\in\cC(\cH,\cG)$ be such that 
$\ran(T)$ is dense in $\cG$ and $H=T^*T$. Then:
\begin{itemize}
\item[(i)] The Hilbert space $\cD(T)$ is closely embedded in $\cH$ with its closed 
embedding $i_T$ having range dense in $\cH$, and its kernel operator 
$A=i_Ti_T^*$ 
coincides with $H^{-1}$.
\item[(ii)] $\cH$ is closely embedded in the Hilbert space $\cR(T^*)$ with its closed 
embedding $j_{T^*}^{-1}$ having range dense in $\cR(T^*)$. The kernel operator 
$B=j_{T^*}^{-1}{j_{T^*}^{-1*}}$ of this closed embedding is unitary equivalent 
with $A=H^{-1}$.
\item[(iii)] The operator $i_T^*|\ran(T^*)$ 
extends uniquely to a unitary operator 
$\widetilde A$ between the Hilbert spaces $\cR(T^*)$ and $\cD(T)$. In addition,
$\widetilde A$ is the unique unitary extension of the kernel operator $A$, when 
viewed as an operator acting from $\cR(T^*)$ and valued in $\cD(T)$, as well.
\item[(iv)] The operator $H$ can be viewed as a linear operator with domain dense 
in $\cD(T)$ and dense range in $\cR(T^*)$, is isometric, extends uniquely 
to a unitary operator 
$\widetilde H\colon \cD(T)\ra\cR(T^*)$, and $\widetilde H={\widetilde A}^{-1}$.
\item[(v)] Letting $V_T\in\cB(\cG,\cD_T)$ denote the unitary operator such that 
$T^{-1}=i_T V_T$ and $U_{T^*}\in\cB(\cG,\cR(T^*))$ denote the unitary 
operator such 
that $T^*=U_{T^*}j_{T^*}$, we  have $\widetilde H=U_{T^*}V_T^{-1}$.
\item[(vi)] The operator $\Theta$ defined by \eqref{e:theta} provides a canonical 
identification of the Hilbert space $\cR(T^*)$ with the conjugate dual
space $\cD(T)^*$ and, for all $y\in\dom(T^*)$
\begin{equation*}
\|y\|_{T^*}=\sup\bigl\{
\frac{|\langle y,x\rangle_\cH|}{|x|_T}\mid x\in\dom(T)
\setminus\{0\}\bigr\}.
\end{equation*}
\end{itemize}
\end{theorem}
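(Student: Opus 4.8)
The plan is to verify each of the six items (i)--(vi) by invoking the machinery already established in Section~3 and earlier in Section~4, since most of the work has been done in the preceding discussion and lemmas. The overall strategy is to reduce each claim to a property of the spaces $\cD(T)$ and $\cR(T^*)$ together with the unitary factorizations $V_T$ and $U_{T^*}$ coming from Lemma~\ref{l:cofactorization}, and the kernel-operator identities of Proposition~\ref{p:dete} and Proposition~\ref{p:tetestar}.

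For item~(i), I would apply Proposition~\ref{p:dete}(a) to conclude that $\cD(T)$ is closely embedded in $\cH$ with closed embedding $i_T$; density of $\ran(i_T)$ follows from $\ran(T)$ being dense in $\cG$ together with the factorization $T^{-1}=i_TV_T$ with $V_T$ unitary, and the identity $A=i_Ti_T^*=(T^*T)^{-1}=H^{-1}$ is exactly \eqref{e:aiet}. For item~(ii), the roles are dual: Lemma~\ref{l:closed} gives that $j_{T^*}$ is closed (since $T^*$ is), so $\cH$ is closely embedded in $\cR(T^*)$ via $j_{T^*}^{-1}$, and \eqref{e:beiet} shows $B=(j_{T^*}^*j_{T^*})^{-1}$ is unitarily equivalent to $(TT^*)^{-1}$, hence to $A=(T^*T)^{-1}$ via the polar decomposition of $T$. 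For items~(iii)--(v), the key observation is that $\widetilde H:=U_{T^*}V_T^{-1}$ is unitary as a composition of unitaries $\cD(T)\to\cG\to\cR(T^*)$, and that $\widetilde H$ extends $H$ in the sense $j_{T^*}\widetilde H\supseteq Hi_T$ established in the paragraph accompanying Figure~1; setting $\widetilde A=\widetilde H^{-1}=V_TU_{T^*}^{-1}$ gives the unitary extension of $A$. The statements that $H$ (resp.\ $A$) is densely defined with dense range as an operator between $\cD(T)$ and $\cR(T^*)$ are precisely what was proven in the paragraph showing $\ran(A)=\dom(H)$ dense in $\cD(T)$ and $\ran(H)$ dense in $\cR(T^*)$, and the isometry of $H$ on its domain follows because it agrees there with the restriction of the unitary $\widetilde H$.

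Finally, for item~(vi) I would invoke the definition \eqref{e:theta} of $\Theta$ and the fact, already noted, that $\widetilde A$ is unitary, so that $\Theta$ is a unitary identification of $\cR(T^*)$ with the conjugate dual $\cD(T)^*$; the variational formula for $\|y\|_{T^*}$ is exactly \eqref{e:varnorm}, obtained by applying Theorem~\ref{t:range} to $T^*$ in place of $T$.

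The step I expect to require the most care is the precise bookkeeping in items~(iii)--(iv) concerning the \emph{uniqueness} of the unitary extensions and the exact sense in which $\widetilde A$ extends the kernel operator $A$ viewed as an operator from $\cR(T^*)$ into $\cD(T)$. The subtlety is that $A=i_Ti_T^*$ is originally an operator \emph{in} $\cH$, and reinterpreting its domain as sitting inside $\cR(T^*)$ (via $\dom(A)\subseteq\ran(T^*)\subseteq\cR(T^*)$) and its range inside $\cD(T)$ requires checking that the graph of this reinterpreted operator is dense in the graph of $\widetilde A$, so that the unitary extension is genuinely unique; this is where one must use density of $\ran(T^*)$-type domains together with the closedness supplied by Lemma~\ref{l:iclosed} and Lemma~\ref{l:closed}, rather than any new computation.
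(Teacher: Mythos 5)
Your proposal is correct and follows essentially the same route as the paper: the paper's "proof" of Theorem~\ref{t:model} is precisely the sequence of constructions in Section~\ref{s:model} preceding the statement (the identity \eqref{e:aiet}, the unitaries $V_T$ and $U_{T^*}$ from Lemma~\ref{l:cofactorization}, the unitary equivalence \eqref{e:beiet} via polar decomposition, the density arguments for $\ran(A)$ and $\ran(H)$, the variational formula \eqref{e:varnorm}, and the map $\Theta$ of \eqref{e:theta}), and you have assembled exactly these ingredients, including the density considerations that settle the uniqueness of the unitary extensions you rightly flag as the delicate point.
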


\section{Triplets of Closely Embedded Hilbert Spaces}\label{s:tcehs}

In this section, we use the model obtained in Theorem~\ref{t:model} in order to 
derive an abstract definition for a triplet of closely embedded Hilbert spaces and 
then we approach existence, uniqueness, and other basic properties, 
as a left-right symmetry.

\subsection{Definition and Basic Properties}\label{ss:dbp}
By definition, $(\cH_+;\cH_0;\cH_-)$ is called a \emph{triplet of closely embedded 
Hilbert spaces} if:

\begin{itemize}
\item[(th1)] $\cH_+$ is a Hilbert space 
closely embedded in the Hilbert space
$\cH_0$, with the closed embedding denoted by $j_+$, and such that $\ran(j_+)$ is 
dense in $\cH_0$.
\item[(th2)] $\cH_0$ is closely embedded in the Hilbert space $\cH_-$, 
with the closed embedding denoted by $j_-$, and such that $\ran(j_-)$ is dense in 
$\cH_-$.
\item[(th3)] $\dom(j_+^*)\subseteq\dom(j_-)$ and for every vector
$y\in\dom(j_-)\subseteq \cH_0$ we have
\begin{equation}\label{e:dual}
 \|y\|_-=\sup\bigl\{\frac{|\langle x,y\rangle_{\cH_0}|}{\|x\|_+}\mid 
x\in\dom(j_+),\ x\neq 0\bigr\}.
\end{equation}
\end{itemize}

Let us first observe that, by \eqref{e:dual} in axiom (th3), 
for all $y\in\dom(j_-)$ and $x\in\dom(j_+)$ we have 
$|\langle j_+x,y\rangle_{\cH_0}|=|\langle x,y\rangle_{\cH_0}|\leq \|x\|_+ \|y\|_-$. By the definition
of $\dom(j_+^*)$ this means that $\dom(j_-)\subseteq \dom(j_+^*)$ hence, 
taking into account of $\dom(j_+^*)\subseteq \dom(j_-)$, the first condition 
in axiom (th3), it follows that actually 
\begin{equation}\label{e:domeq}\dom(j_+^*)=\dom(j_-).\end{equation}

In the following we show that the axioms (th1)--(th3) are sufficient in order to obtain
essentially all the properties that we get in Theorem~\ref{t:model}. 
Given $(\cH_+;{\cH_0};\cH_-)$ a triplet of closely embedded Hilbert spaces and letting 
$j_\pm$ denote the closed embedding of $\cH_+$ in ${\cH_0}$ and, respectively, 
the closed embedding of ${\cH_0}$ in $\cH_-$, the operator $A=j_+j_+^*$ is positive
selfadjoint in ${\cH_0}$ and it is called the \emph{kernel operator}. Also, 
since $\ran(j_+)$ 
is dense in ${\cH_0}$, it follows that $\ran(A)$ is dense in ${\cH_0}$ as well, equivalently 
$\ker(A)=\{0\}$. In particular, $H:=A^{-1}$ is a positive
selfadjoint operator in ${\cH_0}$ 
and it is called the \emph{Hamiltonian} of the triplet $(\cH_+;{\cH_0};\cH_-)$. 
Clearly, $0$ is not an eigenvalue of $H$. In addition, 
let us observe that $\dom(H)\subseteq \ran(j_+)=\dom(j_+)\subseteq \cH_+$

Further on, for any $y\in\ran(j_-)$, the linear 
functional $\cH_+\supseteq\ran(j_+)\ni x\mapsto \langle x,y\rangle_{\cH_0}\in\CC$ is 
bounded and hence, via the Riesz Representation Theorem, there exists uniquely 
$z_y\in\cH_+$ such that $\langle x,y\rangle_{\cH_0}=\langle x,z_y\rangle_{\cH_+}$ for all 
$x\in\ran(j_+)=\dom(j_+)$, and $\|z_y\|_{+}=\|y\|_{-}$. Thus, a linear operator 
$V\colon \dom(j_-)(\subseteq\cH_-)\ra \cH_+$ is uniquely defined by $Vy=z_y$, and it 
is isometric, in particular it is extended uniquely to an isometry 
$\widetilde V\colon \cH_-\ra\cH_+$. In addition, for all $x\in\dom(j_+)=\ran(j_+)$ and all 
$y\in\dom(j_-)=\ran(j_-)$ we have
\begin{equation*} \langle j_+x,y\rangle_{\cH_0}=\langle x,y\rangle_{\cH_0}
=\langle x,z_y\rangle_+=\langle x,Vy\rangle_+,
\end{equation*} that is, $V$ is $j_+^*$ when viewed as a linear operator from 
$\cH_-$ and valued in $\cH_+$. Consequently, 
$\ran(V)\supseteq \ran(j_+^*)$, which is dense in $\cH_+$. Thus, we have shown that
the isometric operator $\widetilde V$ is actually unitary $\cH_-\ra\cH_+$.

We observe that the kernel operator $A$ can be viewed also as acting from 
$\cH_-$ and valued in $\cH_+$. Indeed, $A=j_+j_+^*$,  hence 
$\dom(A)\subseteq \dom(j_+^*)=\dom(j_-)\subseteq\cH_-$ and, clearly, 
$\ran(A)\subseteq \ran(j_+)\subseteq \cH_+$. On the other hand, for any 
$y\in\dom(A)\subseteq \cH_-$ and any $x\in\dom(j_+)\subseteq\cH_+$ we have 
$\langle Ay,x\rangle_+=\langle j_+j_+^*y,x\rangle_+=\langle j_+^*y,x\rangle_+$, hence 
$A$ is a restriction of the operator $V$ defined before. 

In the following we prove that $\ran(A)$ is dense in $\cH_+$. To see this, 
let $x\in\cH_+$ be such that $\langle x,Ay\rangle_+=0$ for all $y\in\dom(A)$. We claim 
that $\langle x,j_+^*y\rangle_+=0$ for all $y\in\dom(j_+^*)$. 
Indeed, since $\dom(j_+^*)$ 
is a core for $A$, it follows that for any $y\in\dom(j_+^*)$ there exists a sequence 
$(y_n)$ of vectors in $\dom(A)$ such that $\|y_n-y\|_{\cH_0}\ra 0$ and 
$\|j_+^*y_n-j_+^*y\|_+\ra 0$ as $n\ra\infty$, hence $0=\langle x,Ay\rangle_+
=\langle x,j_+^*y_n\rangle_+\ra \langle x,j_+^*y\rangle_+$ as $n\ra\infty$. 
Taking into account
that the range of $V=j_+^*$, considered as an operator from $\cH_-$ to $\cH_+$, 
is dense in $\cH_+$, it follows that $x=0$. Thus, we conclude that $\ran(A)$ 
is dense in $\cH_+$.

In a similar fashion we can prove that $\dom(A)$ is dense in $\cH_-$. Since $A$, when 
viewed as a linear operator from $\cH_-$ to $\cH_+$, is a restriction of the operator 
$V$ (formally the same with $j_+^*$) which is isometric, it follows that the linear 
operator $A$,  when viewed as a linear operator from $\cH_-$ to $\cH_+$, is isometric 
and that it has a unique unitary extension $\widetilde A\colon \cH_-\ra\cH_+$, which is
exactly $\widetilde V$.

Similarly, the Hamiltonian operator can be viewed as a 
linear operator densely defined in $\cH_+$ and with range in $\cH_-$: 
recall that $\dom(j_+^*)=\dom(j_-)$ and hence that it is a subspace of $\cH_-$. Since
$H=A^{-1}$, it follows that $H$ is a restriction of $V^{-1}$, it is isometric, with domain 
dense in $\cH_+$ and range dense in $\cH_-$, hence it has a unique unitary extension 
$\widetilde H={\widetilde A}^{-1}={\widetilde V}^{-1}\colon \cH_+\ra\cH_-$.

For a better understanding of all these proven facts we depict these constructions 
by the following diagram:\medskip

\hfil\xymatrix{   &\dom(j_+^*) \ar[ld]_{V} \ar@{^{(}->}[d] \ar@{_{(}->}[rd] & \\
\cH_+ \ar[r]^{j_+} & \ {\cH_0}\ \ar@<-0.5ex>[d]^{\ H=A^{-1}}  \ar[dl]_{j_+^{-1}} \ar@<0.5ex>[r]_{j_-^{-1}}& \ \cH_-\ar@{-->}[l]_{j_-} \ar@<-0.5ex>[d]_{\widetilde V=\widetilde A} \\
\ \cH_+ \ar[r]_{j_+} &\ {\cH_0}\ \ar@<0.5ex>[r]_{j_+} \ar@<-0.5ex>[u]^{A\ } & \ \cH_+ \ar@<-0.5ex>[u]_{\widetilde H={\widetilde A}^{-1}} \ar@{-->}[l]_{j_+^{-1}} 
}\hfill

\smallskip\centerline{Figure 2.}\medskip

\noindent In Figure 2, all the triangular diagrams are commutative, by 
definition. The lower right rectangular diagram is commutative in a weaker sense, 
namely $j_- H\subseteq\widetilde H j_+^{-1}$.

Finally, we show that there exists a natural identification of $\cH_-$ with the 
conjugate dual space of $\cH_+$, more precisely, we consider the operator 
$\Theta\colon \cH_-\ra\cH_+^*$ defined by
\begin{equation*} (\Theta y)(x):=\langle \widetilde Vy,x\rangle_+,\quad 
y\in\cH_-,\ x\in \cH_+.
\end{equation*}
To see this, note that for any $l\in\cH_+^*$ there exists uniquely $z\in\cH_+$ such that
$ l(x)=\langle z,x\rangle_+$, for all $x\in\cH_+$. Letting $y={\widetilde V}^{-1}z
\in\cH_-$ it follows
\begin{equation*} l(x)=\langle z,x\rangle_+=\langle \widetilde Vy,x\rangle_+=
(\Theta y)(x),\quad x\in\cH_+.
\end{equation*} Thus, $\Theta$ is surjective. In addition, with the notation as before,
we have
\begin{equation*}\|\Theta y\|=\|\widetilde Vy\|_+=\|y\|_-,\quad y\in\cH_-,
\end{equation*} hence $\Theta$ is unitary, as claimed.

We gather all these proven facts in the following

\begin{theorem}\label{t:triplet} Let $(\cH_+;{\cH_0};\cH_-)$ be a triplet of closely embedded 
Hilbert spaces, and let $j_\pm$ denote the corresponding closed embeddings 
of $\cH_+$ in ${\cH_0}$ and, respectively, of ${\cH_0}$ in $\cH_-$. Then:

\nr{a} The kernel operator $A=j_+j_+^*$ is positive selfadjoint in ${\cH_0}$ and $0$ 
is not an eigenvalue for $A$. Also, the Hamiltonian operator $H=A^{-1}$ is a 
positive selfadjoint operator in ${\cH_0}$ for which $0$ is not an eigenvalue.

\nr{b} $\dom(j_+^*)=\dom(j_-)$, the closed embeddings $j_+$ and $j_-$ are 
simultaneously continuous or not, and the operator $V=j_+^*\colon\dom(j_+^*)
(\subseteq\cH_-)\ra\cH_+$ extends uniquely to a unitary operator 
$\widetilde V\colon \cH_-\ra\cH_+$. 

\nr{c} The kernel operator $A$ can be viewed as an operator densely defined in 
$\cH_-$ with dense range in $\cH_+$, and it is a restriction of the unitary operator 
$\widetilde V$.

\nr{d} The Hamiltonian operator $H$ can be 
viewed as an operator densely defined in $\cH_+$ with range dense in 
$\cH_-$, and it is uniquely extended to a unitary operator 
$\widetilde H\colon \cH_+\ra\cH_-$, and $\widetilde H=\widetilde{V}^{-1}$.

\nr{e} 
The operator $\Theta$ defined by $(\Theta y)(x)=\langle\widetilde Vy,x\rangle_+$, 
for all $y\in\cH_-$ and all $x\in\cH_+$ provides a unitary identification of $\cH_-$ 
with the conjugate dual space $\cH_+^*$.
\end{theorem}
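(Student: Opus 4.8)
The plan is to derive the whole structure from the single closed embedding $j_+$ and its adjoint, using the variational identity \eqref{e:dual} of axiom (th3) as the only bridge linking $\cH_-$ to the pair $(\cH_+,{\cH_0})$. In spirit this reproduces the model of Theorem~\ref{t:model}: realizing $j_+$ as $i_T$ for a suitable $T$ via Remark~\ref{r:dete}, axiom (th3) becomes exactly the formula \eqref{e:varnorm} characterizing the norm of $\cR(T^*)$, so one expects $\cH_-$ to play the role of $\cR(T^*)$ and $A,H,\widetilde V,\Theta$ to coincide with the operators there; I would nonetheless argue directly, since the axioms are self-contained. For \textbf{(a)}, note that $j_+$ is injective (it is the identity on its domain) and closed densely defined, so by the classical von Neumann theorem $A=j_+j_+^*$ is positive selfadjoint in ${\cH_0}$. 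Density of $\ran(j_+)$ gives $\ker(j_+^*)=\{0\}$, and since $\langle Ay,y\rangle_{\cH_0}=\|j_+^*y\|_+^2$ we get $\ker(A)=\ker(j_+^*)=\{0\}$; hence $0$ is not an eigenvalue of $A$, and $\ran(A)$ is dense in ${\cH_0}$, so $H=A^{-1}$ is positive selfadjoint and injective, i.e.\ $0$ is not an eigenvalue of $H$ either.

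For \textbf{(b)}, the identity $\dom(j_+^*)=\dom(j_-)$ is obtained as in \eqref{e:domeq}: the estimate $|\langle x,y\rangle_{\cH_0}|\le\|x\|_+\,\|y\|_-$ following from \eqref{e:dual} gives $\dom(j_-)\subseteq\dom(j_+^*)$, while the reverse inclusion is the first clause of (th3). The continuity equivalence is read off \eqref{e:dual}: if $j_+$ is bounded, then $\|x\|_{\cH_0}\le C\|x\|_+$ inserted into \eqref{e:dual} yields $\|y\|_-\le C\|y\|_{\cH_0}$, so $j_-$ is bounded; conversely, boundedness of $j_-$ together with \eqref{e:dual} and the density of $\dom(j_-)$ in ${\cH_0}$ forces $\|x\|_{\cH_0}\le C\|x\|_+$ by taking the supremum over $y$. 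Finally, for each $y\in\dom(j_-)$ the Riesz Representation Theorem produces $z_y\in\cH_+$ with $\langle x,y\rangle_{\cH_0}=\langle x,z_y\rangle_+$ and $\|z_y\|_+=\|y\|_-$; the map $Vy=z_y$ is isometric from $\dom(j_-)$ (dense in $\cH_-$ by (th2)) into $\cH_+$, coincides with $j_+^*$ read as an operator $\cH_-\to\cH_+$, and since $\ran(V)\supseteq\ran(j_+^*)$ is dense in $\cH_+$, its extension $\widetilde V\colon\cH_-\to\cH_+$ is unitary.

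For \textbf{(c)}--\textbf{(e)} I reinterpret $A$ and $H$ across the two scales. From $\dom(A)\subseteq\dom(j_+^*)=\dom(j_-)\subseteq\cH_-$ and $\ran(A)\subseteq\ran(j_+)\subseteq\cH_+$, the operator $A$ acts from $\cH_-$ to $\cH_+$, and the computation $\langle Ay,x\rangle_+=\langle j_+^*y,x\rangle_+$ for $x\in\dom(j_+)$ shows it is a restriction of $V$. Density of $\ran(A)$ in $\cH_+$ is Theorem~\ref{t:kernel}(c) (equivalently, reprove it from the fact that $\dom(A)$ is a core for $j_+^*$, since $A=(j_+^*)^*j_+^*$: orthogonality of $x\in\cH_+$ to $\ran(A)$ propagates along an approximating sequence to $x\perp\ran(j_+^*)$, whence $x=0$). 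As $A=\widetilde V|_{\dom(A)}$ with $\widetilde V$ unitary, $\dom(A)=\widetilde V^{-1}(\ran(A))$ is then dense in $\cH_-$; being an isometric restriction of $\widetilde V$ with dense domain and dense range, $A$ extends uniquely to $\widetilde A=\widetilde V$. Passing to inverses, $H=A^{-1}$ is an isometric restriction of $\widetilde V^{-1}$, densely defined in $\cH_+$ (its domain being $\ran(A)$) with range $\dom(A)$ dense in $\cH_-$, so it extends uniquely to the unitary $\widetilde H=\widetilde A^{-1}=\widetilde V^{-1}$. For (e), $(\Theta y)(x)=\langle\widetilde Vy,x\rangle_+$ is onto $\cH_+^*$ by the Riesz theorem in $\cH_+$ and satisfies $\|\Theta y\|=\|\widetilde Vy\|_+=\|y\|_-$, hence is unitary.

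The main obstacle I anticipate is not any single estimate but the disciplined bookkeeping of the three inner products: the same symbol $A$ is simultaneously a positive selfadjoint operator in ${\cH_0}$ and an isometry from $\cH_-$ onto a dense subspace of $\cH_+$, and the transition between these roles rests entirely on the domain identity $\dom(j_+^*)=\dom(j_-)$ and on the core property of $\dom(A)$ for $j_+^*$. The density of $\ran(A)$ in $\cH_+$ is the delicate point, since it is precisely here that one converts ${\cH_0}$-orthogonality into $\cH_+$-orthogonality through the isometry $V=j_+^*$; once that is in hand, everything else follows by taking inverses and unitary extensions.
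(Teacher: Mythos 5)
Your proposal is correct and follows essentially the same route as the paper: the Riesz representation of $y\mapsto\langle\cdot,y\rangle_{\cH_0}$ on $\ran(j_+)$ to build the isometry $V=j_+^*\colon\dom(j_-)\to\cH_+$, the identification of $A$ and $H$ as restrictions of $\widetilde V$ and $\widetilde V^{-1}$, and the core argument converting ${\cH_0}$-orthogonality to $\ran(A)$ into $\cH_+$-orthogonality to $\ran(j_+^*)$. You even tidy up two points the paper leaves implicit or garbles: you correctly state that $\dom(A)$ is a core for $j_+^*$ (the paper writes the roles reversed), you supply the omitted argument that $j_+$ and $j_-$ are simultaneously bounded, and you get density of $\dom(A)$ in $\cH_-$ directly from unitarity of $\widetilde V$ rather than by repeating the orthogonality argument.
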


\subsection{Existence and Uniqueness}\label{ss:eu}
We can now approach questions related to existence and 
uniqueness of triplets of closely embedded Hilbert spaces, similar to results
known for the classical triplets of Hilbert spaces, cf.\ \cite{Berezanski}. 
First we show that, 
in a triplet of closely embedded Hilbert spaces $(\cH_+;{\cH_0};\cH_-)$, 
the essential part, in a weaker sense, 
is the left-hand one, that is, the closed embedding of $\cH_+$ into ${\cH_0}$.

\begin{theorem}\label{t:existence2} Assume that ${\cH_0}$ and $\cH_+$ are two 
Hilbert spaces such that $\cH_+$ is closely embedded in ${\cH_0}$, with $j_+$ denoting 
this closed embedding, and such that $\ran(j_+)$ is dense in ${\cH_0}$.
 
\nr{1} One can always extend this closed embedding to the triplet 
$(\cH_+;{\cH_0};\cR(j_+^{-1*}))$ of closely embedded Hilbert spaces.

\nr{2} Let $(\cH_+;{\cH_0};\cH_-)$ be any other extension of the closed embedding 
$j_+$ to a triplet of closely embedded Hilbert spaces, let $A=j_+j_+^*$ be its 
kernel operator, and let $j_-$ denote the closed embedding of ${\cH_0}$ in $\cH_-$. 
Then, there exists a unique unitary operator 
$\Phi_-\colon \cH_-\ra\cR(j_+^*)$ such that when restricted to $\dom(j_-)$ acts as 
the identity operator. 
\end{theorem}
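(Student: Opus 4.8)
The plan is to reduce part \emph{(1)} to the model of Theorem~\ref{t:model} by taking the factorizing operator to be the algebraic inverse of $j_+$. Since $\ran(j_+)$ is dense in $\cH_0$, the adjoint $j_+^*$ is injective, so $\ker(j_+^*)=\{0\}$ and the kernel operator $A=j_+j_+^*$ is positive selfadjoint in $\cH_0$ with trivial null space; hence $H:=A^{-1}$ is a positive selfadjoint Hamiltonian for which $0$ is not an eigenvalue. I would then set $T:=j_+^{-1}$, with $\dom(T)=\ran(j_+)$, and check that $T\in\cC(\cH_0,\cH_+)$: it is closed because $j_+$ is closed and one-to-one, it is densely defined because $\ran(j_+)$ is dense in $\cH_0$, and $\ran(T)=\dom(j_+)$ is dense in $\cH_+$ by axiom (ceh1). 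By Remark~\ref{r:dete} the Hilbert space $\cD(T)$ is isometrically $\cH_+$, with $i_T$ corresponding to $j_+$, so that $i_Ti_T^*=j_+j_+^*=A$; by Proposition~\ref{p:dete}(c)--(d) this kernel operator equals $(T^*T)^{-1}$, whence $H=A^{-1}=T^*T$. Thus the hypotheses of Theorem~\ref{t:model} are verified for this $T$.

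Conclusion (i) of Theorem~\ref{t:model} then yields axiom (th1). Since $T^*=j_+^{-1*}$, we have $\cR(T^*)=\cR(j_+^{-1*})$, so conclusion (ii) yields axiom (th2): $\cH_0$ is closely embedded with dense range in $\cR(j_+^{-1*})$ via the closed embedding $j_-:=j_{T^*}^{-1}$. For axiom (th3) I would first identify $\dom(j_-)$ with $\ran(T^*)=\ran(j_+^{-1*})=\dom(j_+^*)$, using $(j_+^{-1})^*=(j_+^*)^{-1}$, which is legitimate because $j_+$ is closed, injective, and of dense range. Then I would transcribe the variational formula of Theorem~\ref{t:model}(vi) into \eqref{e:dual}: for $x\in\dom(T)=\ran(j_+)$ the substitution $x=j_+w$ with $w\in\dom(j_+)$ gives $|x|_T=\|w\|_+$, so the two suprema coincide and $\|y\|_-$ equals the right-hand side of \eqref{e:dual} for every $y\in\dom(j_-)$. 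This establishes that $(\cH_+;\cH_0;\cR(j_+^{-1*}))$ is a triplet of closely embedded Hilbert spaces.

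For part \emph{(2)} my plan is a uniqueness-of-completion argument. The key observation is that, in any triplet extension $(\cH_+;\cH_0;\cH_-)$ of $j_+$, axiom (th3) together with \eqref{e:domeq} forces $\dom(j_-)=\dom(j_+^*)$ and forces the $\cH_-$-norm on this common domain to equal $N(y):=\sup\{|\langle w,y\rangle_{\cH_0}|/\|w\|_+\mid w\in\dom(j_+)\setminus\{0\}\}$, an expression depending only on $j_+$ and not on the particular extension. Since $\ran(j_-)$ is dense in $\cH_-$ and the analogous image of $\dom(j_+^*)$ is dense in $\cR(j_+^{-1*})$, both $\cH_-$ and $\cR(j_+^{-1*})$ are completions of the same normed space $(\dom(j_+^*),N)$. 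The map $\Phi_-^0$ sending the copy of $y\in\dom(j_+^*)$ in $\cH_-$ to its copy in $\cR(j_+^{-1*})$ is therefore a surjective isometry between dense subspaces, and extends uniquely to a unitary operator $\Phi_-\colon\cH_-\ra\cR(j_+^{-1*})$ that restricts to the identity on $\dom(j_-)$. Uniqueness of $\Phi_-$ is immediate, since any two unitaries that agree with the identity on the dense set $\dom(j_-)$ must coincide on all of $\cH_-$.

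The heavy lifting here is done by Theorem~\ref{t:model}, so the individual steps are short; the points demanding the most care are the handling of the algebraic inverse $j_+^{-1}$ together with the identity $(j_+^{-1})^*=(j_+^*)^{-1}$, and the verbatim matching of the two variational formulas, so that axiom (th3) holds on the nose and not merely up to the isometric identification $\cD(j_+^{-1})\cong\cH_+$ of Remark~\ref{r:dete}. I would also record that the target in part \emph{(2)} is precisely the space $\cR(j_+^{-1*})$ constructed in part \emph{(1)} (which, by $\cR(S^{-1})=\cD(S)$ applied to $S=j_+^*$, could equally be written $\cD(j_+^*)$), so that both parts refer to the same canonical right-hand space.
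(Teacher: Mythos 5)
Your proof is correct and follows essentially the same route as the paper: part (1) is reduced to Theorem~\ref{t:model} with $T=j_+^{-1}$ and $\cH_-=\cR(T^*)=\cR(j_+^{-1*})$, while part (2) rests on the observation that axiom (th3) pins down the norm on $\dom(j_-)=\dom(j_+^*)$, so any admissible $\cH_-$ is a completion of the same normed space. Your explicit uniqueness-of-completion argument in (2) is just an unpacking of the paper's one-line appeal to the unitaries $\widetilde V$ of Theorems~\ref{t:model} and~\ref{t:triplet}, and your reading of the target space as $\cR(j_+^{-1*})=\cD(j_+^*)$ correctly resolves the notational discrepancy with the $\cR(j_+^*)$ appearing in the statement of (2).
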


\begin{proof} (1) Indeed, the kernel operator $A=j_+j_+^*$ of $\cH_+$
is a positive selfadjoint operator
in ${\cH_0}$ and it is one-to-one, since $\ran(j_+)$ is supposed to be dense in ${\cH_0}$. 
Then $H=A^{-1}$ is a one-to-one positive selfadjoint operator in ${\cH_0}$ and 
letting $T=j_+^{-1}$ we have $H=T^*T$, with $T$ closed, densely defined, and
one-to-one, as an operator from ${\cH_0}$ into $\cH_+$. Then 
we apply Theorem~\ref{t:model}, more precisely, we define 
$\cH_-=\cR(T^*)=\cR({j_+^{-1}}^*)$.

(2) Since $\dom(j_+^*)=\dom(j_-)$ we can use the operators $\widetilde V$ in 
Theorem~\ref{t:model} and Theorem~\ref{t:triplet} to prove that the identity operator on 
$\dom(j_-)$ when viewed as a linear operator from $\cH_-$ and with range in 
$\cR(j_+^*)$ extends uniquely to a unitary operator.
\end{proof}

As a consequence of the previous theorem we can prove that 
the concept of triplet of Hilbert spaces with closed embeddings has a certain 
"left-right" symmetry, which, in general, the classical triplets of Hilbert spaces do 
not share.

\begin{proposition}\label{p:symmetry}
Let $(\cH_+;\cH_0;\cH_-)$ be a triplet of closely embedded Hilbert spaces. Then 
$(\cH_-;\cH_0;\cH_+)$ is also a triplet of closely embedded Hilbert spaces, more 
precisely:

\nr{1} If $j_+$ and $j_-$ denote the closed embeddings of $\cH_+$ in $\cH_0$ and, 
respectively, of $\cH_0$ in $\cH_-$, then $j_-^{-1}$ and $j_+^{-1}$ are the closed 
embeddings of $\cH_-$ in $\cH_0$ and, respectively, of $\cH_0$ in $\cH_+$.

\nr{2} If $H$ and $A$ denote the Hamiltonian, respectively, the kernel operator of the 
triplet $(\cH_+;\cH_0;\cH_-)$, then $A$ and $H$ are the Hamiltonian and, 
respectively, the kernel operator of the triplet $(\cH_-;\cH_0;\cH_+)$.
\end{proposition}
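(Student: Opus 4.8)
The plan is to verify directly that the reversed system $(\cH_-;\cH_0;\cH_+)$, equipped with the algebraic inverses $j_-^{-1}$ and $j_+^{-1}$ as candidate embeddings, satisfies the three axioms (th1)--(th3), reading off the identifications in parts (1) and (2) along the way. Throughout I would use the structure already produced in Theorem~\ref{t:triplet}: the kernel operator $A=j_+j_+^*$ is positive selfadjoint and one-to-one in $\cH_0$, $H=A^{-1}$, one has $\dom(j_+^*)=\dom(j_-)$, and the operator $V=j_+^*$, viewed from $\cH_-$ to $\cH_+$, extends to a unitary $\widetilde V\colon\cH_-\ra\cH_+$ with $\widetilde V=\widetilde A$ and $\widetilde H=\widetilde V^{-1}$.

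First I would dispose of (th1) and (th2) for the reversed triplet. Since $j_-$ is closed (axiom (ceh2)), densely defined in $\cH_0$ (axiom (ceh1)), one-to-one (it acts as the identity on its domain), and has dense range in $\cH_-$ (axiom (th2)), its algebraic inverse $j_-^{-1}$ is again closed, densely defined in $\cH_-$ with $\dom(j_-^{-1})=\ran(j_-)$, one-to-one, and with dense range $\ran(j_-^{-1})=\dom(j_-)\subseteq\cH_0$. Hence $\cH_-$ is closely embedded in $\cH_0$ via $j_-^{-1}$ with dense range; the same reasoning applied to $j_+$ shows $\cH_0$ is closely embedded in $\cH_+$ via $j_+^{-1}$ with dense range. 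This settles (th1)--(th2) for $(\cH_-;\cH_0;\cH_+)$ and already identifies the embeddings as in part (1).

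For the duality axiom (th3) the new ``minus'' norm is $\|\cdot\|_+$ and the new ``plus'' norm is $\|\cdot\|_-$, the new test domain is $\dom(j_+^{-1})=\ran(j_+)$, and the new supremum runs over $\dom(j_-^{-1})=\ran(j_-)$. For the variational formula I would fix $y\in\ran(j_+)$, that is, $y\in\dom(j_+)\subseteq\cH_+$, and for $x\in\ran(j_-)=\dom(j_-)=\dom(j_+^*)$ rewrite
\begin{equation*}
\langle x,y\rangle_{\cH_0}=\langle x,j_+y\rangle_{\cH_0}=\langle j_+^*x,y\rangle_+=\langle\widetilde Vx,y\rangle_+,\qquad \|x\|_-=\|\widetilde Vx\|_+.
\end{equation*}
Putting $w=\widetilde Vx=j_+^*x$ and letting $x$ run through $\dom(j_+^*)\setminus\{0\}$, the vector $w$ runs through $\ran(j_+^*)\setminus\{0\}$, which is dense in $\cH_+$. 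Hence the supremum over such $x$ of $|\langle x,y\rangle_{\cH_0}|/\|x\|_-$ equals the supremum over $w\in\ran(j_+^*)\setminus\{0\}$ of $|\langle w,y\rangle_+|/\|w\|_+$, and by density together with Cauchy--Schwarz this last supremum is exactly $\|y\|_+$, which is the formula required in (th3). By the same argument given right after the definition of a triplet of closely embedded Hilbert spaces, this bound forces $\dom(j_+^{-1})=\ran(j_+)\subseteq\dom((j_-^{-1})^*)$; the remaining inclusion I treat together with part (2).

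The decisive computation, and the step I expect to be the main obstacle, is the identity $j_-^*j_-=A$, which simultaneously yields part (2) and the missing domain inclusion. To establish it I would compare closed quadratic forms: the positive selfadjoint operator $j_-^*j_-$ is associated with the closed form $v\mapsto\|j_-v\|_-^2=\|v\|_-^2$ of form domain $\dom((j_-^*j_-)^{1/2})=\dom(j_-)$, while $A=j_+j_+^*$ satisfies $\dom(A^{1/2})=\dom(j_+^*)=\dom(j_-)$ with $\|A^{1/2}v\|_{\cH_0}=\|j_+^*v\|_+=\|\widetilde Vv\|_+=\|v\|_-$, where the first two equalities are the standard facts $\dom((SS^*)^{1/2})=\dom(S^*)$ and $\|(SS^*)^{1/2}u\|=\|S^*u\|$ for closed densely defined $S$ applied to $S=j_+$. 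Since the two closed forms share the same domain and the same values, the associated operators coincide, $j_-^*j_-=A$. Using then $(j_-^{-1})^*=(j_-^*)^{-1}$, the kernel operator of the reversed triplet is $j_-^{-1}(j_-^{-1})^*=(j_-^*j_-)^{-1}=A^{-1}=H$ and its Hamiltonian is $H^{-1}=A$, which is exactly part (2). Finally, invoking $\ran((SS^*)^{1/2})=\ran(S)$ for closed densely defined $S$ (applied to $S=j_+$ and to $S=j_-^*$, in both cases $SS^*=A$) gives $\ran(j_+)=\ran(A^{1/2})=\ran(j_-^*)$, whence $\dom((j_-^{-1})^*)=\ran(j_-^*)=\ran(j_+)=\dom(j_+^{-1})$; this supplies the missing inclusion and completes (th3). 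The delicate points to check carefully are precisely these unbounded-operator identities for square roots and ranges, together with the form-uniqueness argument that upgrades equality of the forms to the operator identity $j_-^*j_-=A$.
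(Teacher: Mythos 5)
Your proof is correct, but it follows a genuinely different route from the paper's. The paper does not verify the axioms (th1)--(th3) directly for the reversed system: it first treats the model case $\cH_+=\cD(T)$, $\cH_-=\cR(T^*)$ with $j_+=i_T$, $j_-=j_{T^*}^{-1}$, observes that $S=(T^*)^{-1}\in\cC(\cH_0,\cG)$ is injective with dense range and that, by inspecting the constructions of Subsections~\ref{ss:sdt} and \ref{ss:thsrt}, $\cD(S)=\cR(T^*)=\cH_-$ and $\cR(S^*)=\cD(T)=\cH_+$, so that Theorem~\ref{t:model} applied to $S$ immediately yields the reversed triplet; part (2) then comes from Proposition~\ref{p:tetestar} via $j_{T^*}j_{T^*}^*=T^*T=H$, and the general case is reduced to the model case by the uniqueness statement (2) of Theorem~\ref{t:existence2} with $T=j_+$. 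You instead check (th1)--(th3) for $(\cH_-;\cH_0;\cH_+)$ head-on, and your decisive step is the operator identity $j_-^*j_-=A$, which you obtain by comparing the closed quadratic forms of $j_-^*j_-$ and of $A=j_+j_+^*$ on the common domain $\dom(j_-)=\dom(j_+^*)$ and invoking the representation theorem for closed forms, supplemented by the polar-decomposition identities $\dom(|S^*|)=\dom(S^*)$, $\||S^*|u\|=\|S^*u\|$ and $\ran(|S^*|)=\ran(S)$ to pin down $\ran(j_-^*)=\ran(A^{1/2})=\ran(j_+)$ and hence the domain condition in (th3). Both arguments are sound; the paper's buys economy by recycling Theorem~\ref{t:model} and the uniqueness machinery already in place, while yours is more self-contained at the level of the axioms, makes the identity $j_-^*j_-=A$ (equivalently, that $H$ is the kernel operator of the reversed triplet) explicit rather than hidden inside the model, and avoids any appeal to the uniqueness-of-extension theorem -- at the price of importing standard but nontrivial facts from the theory of closed forms and polar decompositions of unbounded operators.
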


\begin{proof} We first prove the statement assuming that the given triplet is in the 
model form, that is, for some Hilbert space $\cG$ and some 
operator $T\in\cC(\cH_0,\cG)$ that is one-to-one and has dense range, we have 
$\cH_+=\cD(T)$ and $\cH_-=\cR(T^*)$, with the closed embeddings $j_+=i_T$ and, 
respectively, $j_-=j_{T^*}^{-1}$, as in Section~\ref{s:model}. Then, observe that 
$S={T^*}^{-1}\in\cC(\cH_0,\cG)$ is one-to-one and has 
dense range and that, inspecting the corresponding constructions in subsections 
\ref{ss:sdt} and \ref{ss:thsrt}, we have $\cD(S)=\cR(T^*)=\cH_-$ and 
$\cR(S^*)=\cD(T)=\cH_+$. By Theorem~\ref{t:model} it follows that 
$(\cH_-;\cH_0;\cH_+)$ is now a triplet of closely embedded Hilbert spaces as well, 
with closed embeddings $j_{T^*}$ and, respectively, $i_T^{-1}$. Thus, assertion (1)
is proven, in this special case. 

In order to prove assertion (2), note that, by Proposition~\ref{p:tetestar}, we have 
$j_{T^*}j_{T^*}^*=T^*T=H$, hence $H$ is the kernel operator of the triplet 
$(\cH_-;\cH_0;\cH_+)$, and then $A$ becomes its Hamiltonian operator.

The general case now follows from assertion (2) in Theorem~\ref{t:existence2} that
shows that, without loss of generality, we 
can assume that $\cH_+=\cD(T)$ and $\cH_-=\cR(T^*)$ for some 
$T\in\cC(\cH_0,\cG)$ which is one-to-one and has dense range, more precisely, we 
can take $\cG=\cH_+$ and $T=j_+$, the closed embedding of $\cH_+$ in $\cH_0$.
\end{proof}

We are now in a position to approach existence and uniqueness of triplets of 
Hilbert spaces in terms of a given Hamiltonian operator.

\begin{theorem}\label{t:existence1} Let $H$ be an arbitrary 
positive selfadjoint operator in a Hilbert space ${\cH_0}$ for  which $0$ is not an 
eigenvalue.

\nr{1} With notation as in subsections~\ref{ss:sdt} 
and \ref{ss:thsrt}, $(\cD(H^{1/2});{\cH_0};\cR(H^{1/2}))$ is a triplet of closely embedded 
Hilbert spaces such that $H$ is its Hamiltonian.

\nr{2} Let $(\cH_+;{\cH_0};\cH_-)$ be any other triplet of closely embedded Hilbert spaces
with the same Hamiltonian $H$. Then:

\begin{itemize}
\item[(a)] $\dom(H^{1/2})$ is dense in both $\cD(H^{1/2})$ and $\cH_+$.
\item[(b)] For any $x\in\dom(H^{1/2})$ and any $y\in\dom(H)$ we have $\langle x,Hy\rangle_{\cH_0}=\langle x,y\rangle_+=(x,y)_{H^{1/2}}$.
\item[(c)] $\dom(H)$ is dense in both $\cD(H^{1/2})$ and $\cH_+$.
\item[(d)] For any $x\in\dom(j_+)=\dom(H^{1/2})$ we have
\begin{equation*} \|x\|_+=\sup\left\{ \frac{|\langle x,H^{1/2}z\rangle_{\cH_0}|}{\|z\|_{\cH_0}} \mid z\in\dom(H^{1/2})\right\}.\end{equation*}
\item[(e)] The identity operator $:\dom(H)(\subseteq\cD(H^{1/2})\ra\cH_+$ extends uniquely to a unitary operator $\Phi_+\colon \cD(H^{1/2})\ra \cH_+$ such that $\Phi_+ y=j_+^*Hy$ for all $y\in\dom(H)$.
\end{itemize}
\end{theorem}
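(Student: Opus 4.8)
The plan is to obtain part (1) directly from the model Theorem~\ref{t:model} via the canonical factorization $H=T^*T$ with $T=H^{1/2}$, and to obtain part (2) by applying Theorem~\ref{t:kernel} to the left-hand closed embedding $j_+$, translating its conclusions through the identification of the two models from Subsections~\ref{ss:sdt} and \ref{ss:thsrt}.

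For part (1), first I would set $T=H^{1/2}$ and $\cG={\cH_0}$. Since $H$ is positive selfadjoint, so is $H^{1/2}$; in particular $T$ is closed and densely defined, i.e.\ $T\in\cC({\cH_0},{\cH_0})$, and $T^*=H^{1/2}=T$. Because $0$ is not an eigenvalue of $H$ we get $\ker(H^{1/2})=\ker(H)=\{0\}$, and selfadjointness of $H^{1/2}$ then yields $\overline{\ran(H^{1/2})}=\ker(H^{1/2})^\perp={\cH_0}$, so $\ran(T)$ is dense in $\cG$ and $T$ is one-to-one. The hypotheses of Theorem~\ref{t:model} are thus met, and since $\cD(T)=\cD(H^{1/2})$, $\cR(T^*)=\cR(H^{1/2})$, and $T^*T=H$, that theorem delivers exactly part (1).

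For part (2), the key observation is that in \emph{any} triplet $(\cH_+;{\cH_0};\cH_-)$ with Hamiltonian $H$ the left-hand closed embedding $j_+$ has kernel operator $A=j_+j_+^*=H^{-1}$, by the very definition of the Hamiltonian of a triplet. Hence Theorem~\ref{t:kernel} applies verbatim to $\cH_+$ and its embedding $j_+$. The dictionary I would use is $A^{1/2}=H^{-1/2}$, giving $\ran(A^{1/2})=\dom(H^{1/2})$, $\ran(A)=\dom(H)$, and, crucially, $\cR(A^{1/2})=\cR(H^{-1/2})=\cD(H^{1/2})$, the last identity holding because both the $\cR$-construction of Subsection~\ref{ss:thsrt} applied to $H^{-1/2}$ and the $\cD$-construction of Subsection~\ref{ss:sdt} applied to $H^{1/2}$ are the completion of the common dense manifold $\dom(H^{1/2})=\ran(H^{-1/2})$ under the same norm $u\mapsto\|H^{1/2}u\|_{\cH_0}$.

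With this dictionary the conclusions (a)--(e) become transcriptions of the corresponding items of Theorem~\ref{t:kernel}: (a) and (c) are immediate restatements of the density of $\ran(A^{1/2})$ and $\ran(A)$; for (b) I would substitute $y'=Hy$ (so $y'\in\dom(A)$ and $Ay'=y$) into Theorem~\ref{t:kernel}(b); for (d) I would reparametrize the supremum by $y=H^{1/2}z$, using $A^{1/2}y=z$; and for (e) I would put $x=Hy$ into the identity $VAx=j_+^*x$ to read off $\Phi_+y=j_+^*Hy$. The one genuinely nonroutine step, and the place I expect any difficulty, is verifying the identification $\cR(H^{-1/2})=\cD(H^{1/2})$ as Hilbert spaces with matching norms; once the norms are matched on the common manifold $\dom(H^{1/2})$, everything else is the bookkeeping of the substitutions above.
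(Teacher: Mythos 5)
Your proposal is correct and follows essentially the same route as the paper: part (1) is obtained by applying Theorem~\ref{t:model} to $T=H^{1/2}$, and part (2) by translating Theorem~\ref{t:kernel} through the dictionary $A=H^{-1}$, $A^{1/2}=H^{-1/2}$, $\cR(A^{1/2})=\cD(H^{1/2})$. You actually spell out more detail than the paper does (in particular the norm-matching identification $\cR(H^{-1/2})=\cD(H^{1/2})$, which the paper leaves implicit), and that verification is sound.
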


\begin{proof} (1) Indeed, we can apply Theorem~\ref{t:model} to $T=H^{1/2}$, 
since $T$ is one-to-one as well.

(2) The argument is essentially contained in Theorem~\ref{t:kernel}, only that this 
is rephrased in terms of the Hamiltonian $H$ instead of its inverse, the kernel 
operator $A$.
\end{proof}

\section{Weak Solutions for a Class of Dirichlet Problems}
\label{s:wsg}

In this section we apply the abstract results on triplets of closely embedded Hilbert
spaces to weak solutions for a Dirichlet problem associated to a class of degenerate 
elliptic partial differential equations.
We briefly fix the notation and recall some of the underlying facts related to Sobolev
spaces.  
Let $\Omega$ be an open   (nonempty) set of the $N$-dimensional euclidean  
space $\mathbb{R}^{N}$.  We use the notation  $D_{j} = \iac \frac{\partial}{\partial 
x_{j}}$, $(j =1, \ldots, N)$ for the operators of  differentiation with respect to the 
coordinates  of points 
$x = (x_{1}, \dots, x_{N})$ in $\mathbb{R}^{N}$, and, for a  multi-index $\alpha 
= (\alpha_{1}, \ldots, \alpha_{N})  \in \mathbb{Z}_{+}^{N}$, let $x^{\alpha} = 
x_{1}^{\alpha_{1}} \cdots x_{N}^{\alpha_{N}}$, $D^{\alpha} 
= D_{1}^{\alpha_{1}} \cdots
D_{N}^{\alpha_{N}}$. $\nabla_{l} =  (D^{\alpha})_{|\alpha|=l}$ 
denotes  the gradient of order 
$l$, where $l$ is a fixed nonnegative integer.  Denoting $m = m (N, l)$   to be 
the number of all 
multi-indices $\alpha = (\alpha_{1}, \ldots, \alpha_{N})$  such that $| \alpha | 
= \alpha_{1} 
+ \cdots + \alpha_{N} = l,$ $\nabla_{l}$  can be  viewed as an operator acting 
from 
$L_{2} (\Omega)$ into $L_{2} (\Omega; \mathbb{C}^{m})$  defined on its 
maximal domain, the Sobolev space $W_{2}^{l} (\Omega)$,  by 
\begin{equation*}\nabla_{l} u = (D^{\alpha} u)_{| \alpha | = l},\quad \ u \in  
W_{2}^{l} (\Omega).\end{equation*}
Recall that the Sobolev space $W_{2}^{l} (\Omega)$  consists of those 
functions $u \in  
L_{2} (\Omega)$ whose distributional derivatives $D^{\alpha} u$  belong to 
$L_{2} 
(\Omega)$    for all $\alpha \in \mathbb{Z}_{+}^{N},  | \alpha | \leq l.$  Equipped 
with the norm 
\begin{equation}\label{e:normawdoiel}\| u \|_{W_{2}^{l} (\Omega)} = 
\biggl( \sum_{|\alpha|\leq m} \|D^\alpha 
u \|_{L_{2} (\Omega)}^2 \biggl)^{1/2},\end{equation}
$W_{2}^{l} (\Omega)$  becomes a Hilbert space that is continuously embedded in 
$L_{2} (\Omega)$.    Also, recall that $\stackrel
\circ{W}_{2}^{l} (\Omega)$ denotes  the closure of $C_{0}^{\infty} (\Omega)$ in 
the space $W_{2}^{l} (\Omega)$. 
Besides, we will use the spaces $\stackrel \circ{L}_{p}^{l} (\Omega)$ 
(for $p = 1, 2$).   
The space $\stackrel \circ{L}_{p}^{l} (\Omega)$, $(1 \leq p < \infty)$ is defined 
as the completion of $C_{0}^{\infty} (\Omega)$ under the metric corresponding 
to 
\begin{equation*}\| u \|_{p, l} :=   \| \nabla_{l} u \|_{L_{p} (\Omega)} = 
\biggl( \int_{\Omega} 
\biggl( \sum_{|\alpha| = l} | D^\alpha u (x) |^{2}  \biggl)^{p/2}  \de x \biggl)^{1/p}, 
\quad u \in 
C_{0}^{\infty} (\Omega).\end{equation*}
The elements of  $\stackrel \circ{L}_{p}^{l} (\Omega)$ can be realized as locally 
integrable functions on $\Omega$ vanishing at the boundary $\partial \Omega$ 
and having distributional  derivatives of order $l$ in $L_{p} (\Omega)$. 
Moreover, these 
functions, after modification on a set  of zero measure, are absolutely 
continuous on 
every line which is parallel to the  coordinate axes, cf.\ 
O.~Nikodym~\cite{Nikodym}, 
S.M.~Nikolski \cite{Nikolski} (see also V.M.~Maz'ja \cite{Mazja}).
 
Further, suppose that on $\Omega$ there is defined an $m \times m$ matrix
valued measurable function $a$, more precisely, 
$a(x) = [a_{\alpha \beta} (x)]$, $|\alpha|,  |\beta| = l$,
$x\in\Omega$, where the scalar valued functions $a_{\alpha,\beta}$ are  
measurable on $\Omega$ for all multi-indices $|\alpha|, |\beta| = l$.  
We impose the following conditions.\smallskip

(C1)  {\it For almost all (with respect to the $n$-dimensional  standard 
Lebesgue measure) 
$x\in \Omega$,  the matrix $a (x)$  is nonnegative (positive semidefinite), 
that is,
\begin{equation*} \sum_{|\alpha|, |\beta| = l}a_{\alpha \beta}(x) 
\overline{\eta}_\beta \eta_\alpha \geq 0,\mbox{ for all }\eta=(\eta_\alpha)_{|\alpha| =l}\in 
\mathbb{C}^m.
\end{equation*}}

According to the condition (C1), there exists an $m \times m$  matrix valued 
measurable function $b$ on $\Omega$, such that 
\begin{equation*}a (x) = b (x)^{*} b (x), \mbox{ for almost all }  x \in \Omega,
\end{equation*}
where $b(x)^*$ denotes the Hermitian conjugate matrix of the matrix $b(x)$.
Here and hereafter, it is convenient to consider $m \times m$ matrices as 
linear 
transformations in $\mathbb{C}^{m}$. Also, $|\cdot|$ denotes the unitary 
norm (the $\ell_2$ norm) in $\mathbb{C}^m$.\smallskip

(C2) {\it There is a nonnegative measurable function $c$ on $\Omega$ 
such that, for 
almost all $x\in\Omega$ and all $\xi=(\xi_1,\ldots,\xi_N)\in\CC^N$,
\begin{equation*}|b (x) \widetilde{\xi}| \geq  c (x) |\tilde{\xi}|,\end{equation*}
where $\tilde{\xi}  = (\xi^{\alpha})_{|\alpha| = l}$ is the vector in 
$\mathbb{C}^{m}$ with  
$\xi^{\alpha} = \xi_{1}^{\alpha_{1}} ... \ \xi_{N}^{\alpha_{N}}$}.\smallskip

(C3) {\it All the entries $b_{\alpha \beta}$  of the $m\times m$ 
matrix valued function $b$ are functions in $L_{1, \mathrm{loc}}(\Omega)$.}\smallskip

(C4) {\it The function $c $  in \emph{(C2)} has the property that $1 \big/ c 
\in L_{2} (\Omega)$.}\smallskip

Under the conditions (C1)--(C4), we consider  the operator $T$ acting from  
$L_{2} (\Omega)$ to $L_{2} (\Omega; \mathbb{C}^{m})$ and defined by
\begin{equation}\label{e:te}
(T u) (x) = b (x) \nabla_{l} u (x), \quad\mbox{ for almost all } 
x \in \Omega,
\end{equation}
on its domain
\begin{equation}\label{e:domte}
\dom (T) = \{ u \in  \stackrel \circ{W}_{2}^{l} (\Omega) \mid b 
\nabla_{l} u  \in L_{2} (\Omega; \mathbb{C}^{m}) \}.\end{equation}

Our aim is to describe, in view of the abstract model proposed in
Section~\ref{s:model}, the triplet of closely embedded Hilbert spaces 
$(\mathcal{D}(T);L_{2}(\Omega);\mathcal{R} (T^{*}))$ associated with the 
operator $T$ defined at \eqref{e:te} and \eqref{e:domte}. In terms of these results, we
obtain 
information about weak solutions 
for  the corresponding operator equation involving the Hamiltonian operator  
$H = T^{*} T$ of the triplet, which in fact is a  Dirichlet boundary value problem 
in $L_{2} (\Omega)$ with homogeneous boundary values. 
This problem is associated  to the differential sesqui-linear form 
\begin{align}\label{e:forma}
a [u, v] & = \ \int_{\Omega} \langle a (x) \nabla_{l} (x) , \nabla_{l} (x) \rangle 
\de x
\\ &  =  \sum_{| \alpha | = | \beta | = l} \ \int_{\Omega} a_{\alpha \beta} (x) D^{\beta} u (x) 
\overline{D^{\alpha} v (x)} d x, \quad u, v \in  C_{0}^{\infty} (\Omega),\nonumber
\end{align}
which, as will be seen, can be extended up to elements of $\mathcal{D} (T)$.  The 
problem can be   reformulated as follows : {\it given $f \in \mathcal{D} (T)^{*}$ 
(which is canonically identified withe $\mathcal{R} (T^{*}))$,  
find $v  \in \mathcal{D} (T)$  such that} 
\begin{equation}\label{e:formadoi}
a [u, v] = \ \langle u, f \rangle \mbox{ for all } u \in \cD (T),
\end{equation}
where $\langle \cdot, \cdot \rangle$ denotes the duality between 
$\mathcal{D} (T)$  and $\mathcal{D} (T)^{*}$. 
The problem in \eqref{e:formadoi} can be considered only for $ u \in \dom (T)$,  
or, even more restrictively, only for  $u \in  C_{0}^{\infty} (\Omega)$. 

We first prove a useful inequality.

\begin{lemma}\label{inequa}  
Under the conditions \emph{(C1)} through \emph{(C4)}, there holds the inequality 
\begin{equation}\label{inequal}
\int_{\Omega} |\nabla_{l} u (x) | \de x  \leq \ C \biggl( \int_{\Omega} |b (x) \nabla_{l} u (x) |^{2} 
\de x \biggl)^{\frac{1}{2}}, \quad u \in C_{0}^{\infty} (\Omega),\end{equation}
where 
\begin{equation*}C =  \biggl( \int_{\Omega} c (x)^{- 2} \de x \biggl)^{\frac{1}{2}}.
\end{equation*}
\end{lemma}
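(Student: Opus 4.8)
The plan is to obtain \eqref{inequal} from a single application of the Cauchy--Schwarz inequality, using (C2) to trade the scalar weight $c$ for the matrix $b$ and (C4) to produce the constant $C$. Only (C2) and (C4) will enter directly: (C1) is needed merely to guarantee the existence of the factor $b$ with $a=b^*b$, and (C3) plays no role in this particular estimate.

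First I would record a pointwise lower bound. Fix $u\in C_0^\infty(\Omega)$, so that $\nabla_l u$ is a smooth, compactly supported $\mathbb{C}^m$-valued function and every integral below is meaningful (if the right-hand side of \eqref{inequal} is infinite there is nothing to prove). The hypothesis (C2) furnishes, for almost every $x\in\Omega$, the inequality
\begin{equation*}
c(x)\,|\nabla_l u(x)|\leq |b(x)\,\nabla_l u(x)|.
\end{equation*}
This is exactly where the (degenerate) ellipticity enters. In the first-order case $l=1$ it is immediate, since then the vectors $\tilde\xi=(\xi^\alpha)_{|\alpha|=1}=\xi$ range over all of $\mathbb{C}^N=\mathbb{C}^m$, and (C2) is precisely the bound $|b(x)\eta|\geq c(x)|\eta|$ applied to $\eta=\nabla_l u(x)$.

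Next I would note that (C4) forces $c(x)>0$ for almost every $x$, for otherwise $1/c$ could not lie in $L_2(\Omega)$; hence the splitting $|\nabla_l u(x)|=c(x)^{-1}\cdot c(x)\,|\nabla_l u(x)|$ is legitimate almost everywhere. Integrating over $\Omega$ and applying Cauchy--Schwarz to the factors $1/c$ and $c\,|\nabla_l u|$ gives
\begin{equation*}
\int_\Omega |\nabla_l u(x)|\,\de x\leq \biggl(\int_\Omega c(x)^{-2}\,\de x\biggr)^{1/2}\biggl(\int_\Omega c(x)^2\,|\nabla_l u(x)|^2\,\de x\biggr)^{1/2}.
\end{equation*}
The first factor equals $C$ by definition together with (C4), while the second factor is bounded above by $\bigl(\int_\Omega |b(x)\nabla_l u(x)|^2\,\de x\bigr)^{1/2}$ thanks to the pointwise estimate of the previous step; combining the two yields \eqref{inequal}.

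The computation is routine, and the only delicate point is the pointwise passage from $c(x)|\nabla_l u(x)|$ to $|b(x)\nabla_l u(x)|$, i.e.\ the legitimacy of invoking (C2) for the vector $\nabla_l u(x)$. For $l=1$ this is automatic, as explained above. For $l\geq 2$ the special vectors $\tilde\xi=(\xi^\alpha)_{|\alpha|=l}$ no longer exhaust $\mathbb{C}^m$, so I would make the applicability of (C2) to the vector $\nabla_l u(x)$ explicit before invoking it; this is the step I expect to require the most care.
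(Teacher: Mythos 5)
Your proof is correct and follows essentially the same route as the paper's: the pointwise bound $|\nabla_{l} u(x)|\leq c(x)^{-1}\,|b(x)\nabla_{l} u(x)|$ obtained from (C2), followed by the Cauchy--Schwarz inequality with the factor $1/c$, whose $L_{2}$-norm is finite by (C4) and equals $C$ (whether one applies Cauchy--Schwarz to the pair $c^{-1}$, $|b\nabla_{l}u|$ as the paper does, or to the pair $c^{-1}$, $c|\nabla_{l}u|$ and then majorizes the second factor as you do, is an immaterial reordering). The one point you flag as delicate --- whether (C2), stated only for vectors of the special form $\tilde{\xi}=(\xi^{\alpha})_{|\alpha|=l}$, may legitimately be applied to the vector $\nabla_{l}u(x)$ when $l\geq 2$ --- is passed over silently in the paper, which simply asserts the pointwise inequality ``due to condition (C2)'', so on this step you are, if anything, more careful than the source.
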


\begin{proof} For any function $u \in C_{0}^{\infty} (\Omega)$, due to condition (C2), 
we have 
\begin{equation*}|b (x) \nabla_{l} u  (x)| \geq  c (x) |\nabla_{l} u  (x)|, \mbox{ 
for almost all }
 x \in \Omega.\end{equation*}
Hence
\begin{equation*}|\nabla_{l} u  (x)| \leq c (x)^{-1} | b (x) \nabla_{l} u  (x)|,  \mbox{ for almost all }
 x \in \Omega, \end{equation*}
and then, integrating over $\Omega$ and then using Schwarz inequality, we obtain 
\begin{align*}\int_{\Omega} \mid  \nabla_{l} u (x) \mid d x  & \leq \int_{\Omega} 
c (x)^{- 1} | b (x) \nabla_{l} u (x)|   \de x  \\
& \leq  \biggl( \int_{\Omega} c(x)^{- 2} d x \biggl)^{\frac{1}{2}}  
\biggl( \int_{\Omega} | b (x) \nabla_{l} u (x) |^{2} \de x \biggl)^{\frac{1}{2}},\end{align*}
hence the inequality \eqref{inequal}.\end{proof}
 
Secondly we investigate the topological properties of the operator $T$.
 
\begin{lemma}\label{l:cdi} 
Under the conditions \emph{(C1)--(C4)}, the operator $T$, 
defined at \eqref{e:te} and \eqref{e:domte}, is closed, densely 
defined, and injective.\end{lemma}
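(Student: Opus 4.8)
The plan is to verify the three properties—closedness, dense definition, and injectivity—in the order that makes the closedness argument cleanest, since that is where the hypotheses (C1)--(C4) do the real work.

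First I would dispatch \emph{injectivity} and \emph{dense domain}, which are essentially formal. For injectivity, suppose $Tu=0$ for $u\in\dom(T)$; then $b\nabla_l u=0$ almost everywhere, and condition (C2) gives $c(x)|\nabla_l u(x)|\leq |b(x)\nabla_l u(x)|=0$ a.e. Since $c>0$ almost everywhere (this must be read off from (C4), because $1/c\in L_2(\Omega)$ forces $c(x)>0$ for a.e.\ $x$), we get $\nabla_l u=0$ a.e., so $u\in\stackrel\circ W_2^l(\Omega)$ has vanishing gradient of order $l$; as an element of the completion of $C_0^\infty(\Omega)$ under the seminorm $\|\nabla_l\cdot\|$, this forces $u=0$. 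For dense definition, note $C_0^\infty(\Omega)\subseteq\dom(T)$ because on a test function $\nabla_l u$ is compactly supported and bounded while each $b_{\alpha\beta}\in L_{1,\mathrm{loc}}(\Omega)$ by (C3), so $b\nabla_l u\in L_2(\Omega;\CC^m)$; and $C_0^\infty(\Omega)$ is dense in $L_2(\Omega)$.

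The substantive part is \emph{closedness}, and here the key device is the inequality of Lemma~\ref{inequa}, which I expect to be the main obstacle to control. Suppose $(u_n)\subseteq\dom(T)$ with $u_n\to u$ in $L_2(\Omega)$ and $Tu_n=b\nabla_l u_n\to g$ in $L_2(\Omega;\CC^m)$. The goal is to show $u\in\dom(T)$ and $Tu=g$. The difficulty is that convergence of $b\nabla_l u_n$ does not obviously control $\nabla_l u_n$ itself, since $b$ may degenerate. This is exactly what Lemma~\ref{inequa} repairs: applying \eqref{inequal} to the differences $u_n-u_m\in C_0^\infty(\Omega)$ (or, after an approximation step, to general elements of $\dom(T)$) yields
\begin{equation*}
\int_\Omega |\nabla_l(u_n-u_m)(x)|\,\de x\leq C\,\|b\nabla_l(u_n-u_m)\|_{L_2(\Omega;\CC^m)},
\end{equation*}
so that $(\nabla_l u_n)$ is Cauchy in $L_1(\Omega;\CC^m)$ and hence converges there to some $h$. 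This forces $\nabla_l u=h$ in the distributional sense (since $u_n\to u$ in $L_2\subseteq L_{1,\mathrm{loc}}$ and distributional differentiation is continuous against test functions), identifying $h$ as the genuine order-$l$ gradient of $u$.

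It then remains to identify $g$ with $b\nabla_l u=b h$ and to confirm $b h\in L_2$. Passing to a subsequence along which $\nabla_l u_n\to h$ almost everywhere, I would argue pointwise a.e.\ that $b(x)\nabla_l u_n(x)\to b(x)h(x)$; comparing with the $L_2$-limit $g$ (again via a further a.e.-convergent subsequence) gives $g=bh=b\nabla_l u$ almost everywhere. Since $g\in L_2(\Omega;\CC^m)$ by hypothesis, this simultaneously shows $b\nabla_l u\in L_2(\Omega;\CC^m)$, i.e.\ $u\in\dom(T)$ as defined in \eqref{e:domte}, and $Tu=g$. The one point requiring care is the membership $u\in\stackrel\circ W_2^l(\Omega)$ demanded by \eqref{e:domte}: the $L_1$-convergence of the gradients together with $L_2$-convergence of $u_n$ places $u$ in the appropriate completion, so $u$ inherits the boundary-vanishing that characterizes $\stackrel\circ W_2^l(\Omega)$, and the proof closes.
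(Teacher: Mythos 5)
Your proof is correct and follows essentially the same route as the paper's: density from (C3) via $C_0^\infty(\Omega)\subseteq\dom(T)$, injectivity from the degeneracy control in (C2)/(C4), and closedness by applying Lemma~\ref{inequa} to differences $u_n-u_k$ so that $(\nabla_l u_n)$ is Cauchy in $L_1$, identifying the limit with the distributional gradient of $u$, and then matching $b\nabla_l u$ with the given $L_2$-limit (you extract a.e.-convergent subsequences where the paper uses Fatou's lemma --- an immaterial difference). Be aware only that your closing claim that $L_1$-convergence of the gradients plus $L_2$-convergence of $u_n$ places $u$ in $\stackrel\circ{W}_{2}^{l}(\Omega)$, as \eqref{e:domte} requires, does not follow from the stated convergences (they only give $u\in\stackrel\circ{L}_{1}^{l}(\Omega)\cap L_2(\Omega)$); the paper's own proof leaves exactly the same point unaddressed, so this is a shared soft spot rather than a defect of your argument relative to the paper.
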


\begin{proof} By (C3), all entries 
$b_{\alpha\beta}$ of $b$ are functions in $L_{1,\mathrm{loc}} (\Omega)$, 
therefore $C_0^\infty (\Omega) \subseteq \dom (T)$, hence
$T$ is densely defined. The injectivity of  $T$ 
follows from the inequality \eqref{inequal} given in Lemma \ref{inequa}. 

In order to prove that $T$ is closed, let 
$(u_{n})$ be a sequence of  elements $u_{n} \in \dom(T)$, i.e.\ 
$u_n \in \stackrel\circ{W}_{2}^{l} (\Omega)$  for which 
$b  \nabla_{l} u_n \in L_{2} (\Omega; \mathbb{C}^{m})$,
such that $T u_n \rightarrow v$ in the norm  of $L_{2} (\Omega; \mathbb{C}^{m})$ 
and $u_{n} \rightarrow u$ in the norm of $L_{2} (\Omega)$. It follows that 
\begin{equation*}
\int_{\Omega} | b  \nabla_{l} (u_{n} - u_{k}) |^{2} \de x \rightarrow 
0,\mbox{ as }n,k\rightarrow \infty\end{equation*} and, by Lemma~\ref{inequa},
\begin{equation*}
\int_{\Omega} |\nabla_{l} (u_{n} - u_{k})| \de x \rightarrow 0,\mbox{ as }
n,k\rightarrow\infty\end{equation*}
that is,
\begin{equation*}\| u_{n} - u_{k} \|_{\stackrel\circ{L}_{1}^{l} (\Omega)} \rightarrow 0 
\mbox{ as } n, k \rightarrow \infty.\end{equation*}

Since $\stackrel\circ{L}_{1}^{l} (\Omega)$  is a complete space and 
the gradient of functions  in $\stackrel\circ{L}_{1}^{l} (\Omega)$, considered 
in the sense of distributions, coincides almost everywhere with the gradient 
considered in the sense of ordinary derivatives, see Theorem 1.1.3/1 
  in V.G.~Maz'ja \cite{Mazja}, it follows that there is an element 
$\widetilde{u} \in \stackrel\circ{L}_{1}^{l} (\Omega)$ such that 
\begin{equation*}\int_{\Omega} |\nabla_{l} (u_{n} - \tilde{u})| \de x \rightarrow 0 
\mbox{ as } n  \rightarrow \infty.\end{equation*}
Note also that, without loss of generality, 
we can assume that $u_{n} \rightarrow  u$  pointwise almost everywhere on 
$\Omega$: otherwise, we may use a subsequence of $(u_{n})$.

For any $\varphi \in C_{0}^{\infty} (\Omega)$ we have
$u \overline{\nabla_{l}^{*} \varphi}  \in L_{1} (\Omega)$,
and then by the Dominated Convergence Theorem of Lebesgue, one gets
\begin{align*}\langle u, \nabla_{l}^{*} \varphi \rangle_{L_{2} (\Omega)} & 
=  \int_{\Omega} u  \overline{\nabla_{l}^{*} \varphi} \de x \\
&= \lim_{n \rightarrow \infty} \int_{\Omega} u_{n} \overline{\nabla_{l}^{*} \varphi} \de x  
= \lim_{n \rightarrow \infty} \int_{\Omega}  \langle \nabla_{l} u_{n}, \varphi \rangle\de x\\ 
&=  \int_{\Omega} \langle \nabla_{l} \widetilde{u}, \varphi \rangle \de x.\end{align*}
Therefore, $u \in \dom (\nabla_{l})$ and $\nabla_{l} u  = \nabla_{l} \widetilde{u}$,  
hence
\begin{equation*}\int_{\Omega} \ \mid \nabla_{l} (u_{n} - u) \mid \de x \rightarrow 
0\mbox{ as } n  \rightarrow \infty.\end{equation*}
Moreover,
\begin{equation*}\| u - \widetilde{u} \|_{\stackrel\circ{L}_{1}^{l} (\Omega)}\leq 
\| u - u_n\|_{\stackrel\circ{L}_{1}^{l} (\Omega)}  +  \| u_n - \widetilde{u} \|_{\stackrel
\circ{L}_{1}^{l} (\Omega)}  \rightarrow 0,\end{equation*}
so $ u = \widetilde{u} \in \stackrel\circ{L}_{1}^{l} (\Omega)$.

Also, we have
\begin{equation*}\lim_{n \rightarrow \infty} \nabla_{l} u_{n} (x) =  \nabla_{l} u (x),  
 \quad   \mbox{ for almost all }x \in \Omega \end{equation*}
Then, by Fatou's Lemma,  
\begin{equation*}\int_{\Omega} |b  \nabla_{l} (u_{n} - u)|^{2} \leq \liminf_{k} 
\int_{\Omega} | b  \nabla_{l} (u_{n} - u_k) |^{2} d x \leq \epsilon.\end{equation*}
It follows that
\begin{equation*} b  \nabla_{l} u =  b  \nabla_{l} (u - u_{n}) + b  \nabla_{l} u_{n}  
\in L_{2} (\Omega; \mathbb{C}^{m})\end{equation*}     
and 
\begin{equation*}\int_\Omega |b  \nabla_l (u_{n} - u) |^{2} \rightarrow 0 \mbox{ as } n 
\rightarrow \infty.\end{equation*}  
Therefore $u \in \dom (T)$,   $v = b \nabla_{l}  u$, i.e.\ $v =Tu$, and
the closedness of $T$ is proven.\end{proof}    

As a consequence of Lemma~\ref{l:cdi}, we can now apply Theorem~\ref{t:model} 
and the underlying constructions to the operator $T$.  To this 
end, it will be convenient to consider $T$ as an operator acting from 
$L_2 (\Omega)$ to the space obtained by the  closure of $\ran (T)$ in 
$L_2 (\Omega; \mathbb{C}^m)$.  
Obviously, all properties in the previous lemma remain true for this restriction 
as well. We now follow the model space $\cD(T)$ as in Subsection~\ref{ss:sdt} 
and define 
\begin{equation}\label{e:modte}|u |_{T} : =  \biggl( \int_{\Omega} |b (x)  \nabla_{l} u  (x) |^{2} \de x 
\biggl)^{\frac{1}{2}}, \ u \in\dom(T).\end{equation}
Recall that $b$ is determined by $a (x) = b^{*} (x) b (x)$  for almost all $x \in \Omega$
and note that, due to the conditions (C2) through (C4), this is a
pre-Hilbert norm on $\dom (T)$.  The corresponding inner product is 
given by, 
\begin{equation}\label{e:psmodte} ( u, v )_{T} = \int_{\Omega} \langle b (x) \nabla_{l} u (x),  b (x) 
\nabla_{l} v (x) \rangle \de x \end{equation}
for $u,v\in\dom(T)$. Let $\mathcal{D} (T)$ denote the Hilbert space obtained 
by an abstract completion of $\dom (T)$  with respect to the norm 
$| \cdot |_{T}$ defined at \eqref{e:modte}.  
In order to use efficiently this space, we have to choose a special representation
of the space  $\mathcal{D} (T)$ that can be realized inside
the space $\stackrel\circ{L}_{1}^{l} (\Omega)$,  with elements 
functions on $\Omega$.

\begin{proposition}\label{p:hazeroreal}
The Hilbert space  $\mathcal{D} (T)$  has a realization that 
is continuously embedded in $\stackrel\circ{L}_{1}^{l} (\Omega)$.
\end{proposition}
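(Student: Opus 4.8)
The plan is to use the inequality of Lemma~\ref{inequa} to transport the abstract completion $\cD(T)$ into $\stackrel\circ{L}_{1}^{l}(\Omega)$, the crux being the injectivity of the resulting map.

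First I would extend the estimate \eqref{inequal} from $C_0^\infty(\Omega)$ to all of $\dom(T)$. The only ingredient in Lemma~\ref{inequa} is the pointwise bound $|\nabla_l u(x)| \leq c(x)^{-1}|b(x)\nabla_l u(x)|$, valid for almost every $x$ whenever the order-$l$ gradient of $u$ is defined almost everywhere; integrating and applying Schwarz inequality exactly as before gives $\|u\|_{1,l} \leq C|u|_T$ for every $u \in \dom(T)$, with the same constant $C = (\int_\Omega c^{-2})^{1/2}$. Thus the identity map is continuous from $(\dom(T),|\cdot|_T)$ into $\stackrel\circ{L}_{1}^{l}(\Omega)$; that $\dom(T)$ genuinely lies inside $\stackrel\circ{L}_{1}^{l}(\Omega)$ follows as in the closedness argument of Lemma~\ref{l:cdi}, since each $u\in\dom(T)\subseteq\stackrel\circ{W}_{2}^{l}(\Omega)$ is approximable by $C_0^\infty(\Omega)$ functions that are simultaneously $|\cdot|_T$-Cauchy, hence $\|\cdot\|_{1,l}$-convergent to $u$.

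Next, since $\dom(T)$ is by construction dense in $\cD(T)$ and $\stackrel\circ{L}_{1}^{l}(\Omega)$ is complete, this continuous identity extends uniquely to a continuous linear map $\iota\colon\cD(T)\to\stackrel\circ{L}_{1}^{l}(\Omega)$ with $\|\iota\|\leq C$. Concretely, for $f\in\cD(T)$ I choose $(u_n)\subseteq\dom(T)$ with $|u_n-f|_T\to 0$; then $(u_n)$ is $\|\cdot\|_{1,l}$-Cauchy and I set $\iota(f)$ to be its limit, which is independent of the chosen sequence by the estimate above.

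The main obstacle is showing that $\iota$ is injective, for only then does it furnish a faithful realization of $\cD(T)$ as a subspace of $\stackrel\circ{L}_{1}^{l}(\Omega)$. Suppose $\iota(f)=0$ and pick $(u_n)\subseteq\dom(T)$ with $|u_n-f|_T\to 0$, so that $\|u_n\|_{1,l}=\int_\Omega|\nabla_l u_n|\to 0$ while $b\nabla_l u_n = Tu_n$ converges in $L_2(\Omega;\mathbb{C}^m)$ to some $g$ with $\|g\|_{L_2}=|f|_T$. From $\nabla_l u_n\to 0$ in $L_1$ I pass to a subsequence along which $\nabla_l u_n\to 0$ almost everywhere, whence $b\nabla_l u_n\to 0$ almost everywhere; passing to a further subsequence of the $L_2$-convergent sequence $b\nabla_l u_n$ forces its limit $g$ to vanish almost everywhere. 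Hence $|f|_T=\|g\|_{L_2}=0$, i.e.\ $f=0$, and $\iota$ is injective. Transporting the norm of $\cD(T)$ through $\iota$ then exhibits $\cD(T)$ as a Hilbert space of functions continuously contained in $\stackrel\circ{L}_{1}^{l}(\Omega)$, with embedding constant at most $C$, as claimed.
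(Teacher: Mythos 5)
Your proof is correct and follows essentially the same route as the paper's: extend the inequality of Lemma~\ref{inequa} beyond $C_{0}^{\infty}(\Omega)$, use completeness of $\stackrel\circ{L}_{1}^{l}(\Omega)$ to define the induced map $J_a$ (your $\iota$) on the completion $\cD(T)$, and prove injectivity by extracting an almost everywhere convergent subsequence from $\nabla_{l}u_{n}\to 0$ in $L_{1}(\Omega)$. The only difference is cosmetic: where you identify the $L_{2}$-limit $g$ of $b\nabla_{l}u_{n}$ with its a.e.\ limit $0$ via a further subsequence, the paper reaches the same conclusion ($|u_n|_T\to 0$) by applying Fatou's Lemma to $|b\nabla_{l}(u_{n}-u_{k})|^{2}$.
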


 \begin{proof} Let $u$ be an arbitrary element of the space  $\mathcal{D} (T)$.  Then, there exists a sequence $(u_{n})$, with 
 all elements in $\dom(T)$,  such that 
 \begin{equation*}|u_{n} - u |_{T}  \rightarrow 0 \mbox{ as } n \rightarrow 
 \infty.\end{equation*}
 In particular,
\begin{equation*} |u_{n} - u_{k} |_{T}^{2} =   \int_{\Omega} | b (x) (\nabla_{l} 
(u_n-u_k) (x)|
^{2} \de x \rightarrow 0 \mbox{ as }  n, k \rightarrow \infty.    \end{equation*}
 In view of the inequality in Lemma~\ref{inequa}, 
it follows that 
\begin{equation*}\| u_{n} - u_{k} \|_{\stackrel\circ{L}_{1}^{l} (\Omega)}\rightarrow 0 
\mbox{ as } n, k \rightarrow \infty.\end{equation*}

Since $\stackrel\circ{L}_{1}^l (\Omega)$ is complete there exists a function  
$v \in  \stackrel\circ{L}_{1}^{l} (\Omega)$  such that 
\begin{equation*}\| u_{n} - v \|_{\stackrel\circ{L}_{1}^{l} (\Omega)} \ \rightarrow 0 
\mbox{ as } n \rightarrow \infty.\end{equation*}
The element $v$ depends only on $u$, more precisely, it is not depending on the 
chosen sequence $(u_{n})$. Therefore,
it can be defined an operator 
$J_{a}  \colon  \mathcal{D} (T)   \rightarrow \ \stackrel
\circ{L}_{1}^{l} (\Omega)$  by setting
\begin{equation*}J_{a} u = v, \quad u \in   \mathcal{D} (T).\end{equation*}
$J_{a}$ is an injective operator. To see this, if $J_{a} u = 0$, then for a suitable 
sequence $(u_{n})$,  $u_{n} \in \dom (T)$, we have $\mid u_{n} - u \mid_{T}^{2} 
\rightarrow 0$, and
\begin{equation*}\|u_{n} \|_{\stackrel\circ{L}_{1}^{l} (\Omega)} \sim   \int_{\Omega} |
\nabla_{l} u_{n}  (x)  | \de x  \rightarrow 0 \mbox{  as } n \rightarrow \infty.
\end{equation*}
It can be assumed that  $\nabla_{l} u_{n} \rightarrow 0$ almost  everywhere,
otherwise, we may pass to a subsequence of $(u_{n})$.  For any $\epsilon > 0$  
and sufficiently large $n$ and $k$, there holds
\begin{equation*} |u_{n} - u_{k} |_{T}^{2} = \int_{\Omega} | b  \nabla_{l} (u_{n} - 
u_{k} ) |^{2} \de x < \epsilon,\end{equation*} 
and, by applying Fatou's Lemma,
\begin{align*}\| u_{n}\|_{T}^{2} &= \int_{\Omega} | b  \nabla_{l} u_{n}|^{2} \de x =  
\int_{\Omega} \lim_{k\rightarrow \infty} | b  \nabla_{l}  (u_{n} - u_{k})  |^{2} \de x \\
& \leq  \liminf_{k}   \int_{\Omega} \ \mid b  \nabla_{l} (u_{n} - u_{k}) \mid^{2} \de x   
\leq  \epsilon.\end{align*}
Thus, $u_{n} \rightarrow 0$  in $\mathcal{D} (T)$ and 
hence $u = 0$. We conclude that the operator $J_{a}$ is injective, therefore 
the space $\mathcal{D} (T)$ can be realized by means of 
functions in $\stackrel\circ{L}_{1}^{l} (\Omega)$.
Moreover, the embedding of $\mathcal{D} (T)$ 
into $\stackrel\circ{L}_{1}^{l} (\Omega)$ is continuous, that again is a 
consequence of the inequality in 
Lemma~\ref{inequa}  which, obviously, can be extended for all
$u\in  \mathcal{D} (T)$. 
\end{proof}

As a consequence of Proposition \ref{p:hazeroreal}, let 
$\stackrel\circ{\mathcal{H}}_{a}^{l}  (\Omega)$ denote
the concrete realization  $\mathcal{D} (T)$  continuously embedded into 
$\stackrel\circ{L}_{1}^{l} (\Omega)$. 
Moreover, according to the assertions in items (i) and (ii) of 
Theorem~\ref{t:model}, this space $\stackrel
\circ{\mathcal{H}}_{a}^{l}  (\Omega)$  is closely  embedded in 
$L_{2} (\Omega)$       
and, in turn, $L_{2} (\Omega)$  is closely embedded in the conjugate space  
$\bigl( \stackrel\circ{\mathcal{H}}_{a}^{l}  (a) \bigl)^{*}$, that we denote by  
$\stackrel\circ{\mathcal{H}}_{a}^{-l}  (\Omega)$. Moreover, 
$( \stackrel\circ{\mathcal{H}}_{a}^{l}  
(\Omega); \ L_{2} (\Omega); \ \stackrel\circ{\mathcal{H}}_{a}^{-l}  (\Omega) )$
is a triplet of closely embedded Hilbert spaces in the sense of the definition as 
in Subsection~\ref{ss:dbp}. 
 
Further on,
by Theorem \ref{t:model} (vi), the conjugate space of 
$\stackrel\circ{\mathcal{H}}_{a}^{l}  (\Omega)$,  that is, 
$\stackrel\circ{\mathcal{H}}_{a}^{-l}  (\Omega)$, is canonically identified with 
$\mathcal{R} (T^{*})$.  
In general,  this is not a space of distributions on $\Omega$. 
On the other hand, 
for every $f \in (\stackrel\circ{\mathcal{H}}_{a}^{l}  (\Omega))^{*}$ there 
exist elements $g \in L_{2} (\Omega; \mathbb{C}^{m})$  such 
that \begin{equation}\label{e:fu}
f (u) = \int_{\Omega} \langle g(x), b (x) \nabla_{l} u (x) \rangle \de x, \quad u 
\in  \stackrel\circ{W}_{2}^{l} (\Omega).
\end{equation}
Moreover,
\begin{equation*}\| f \|_{(\stackrel\circ{\mathcal{H}}_{a}^{l}  (\Omega))^{*}} = 
\inf\{ \| g \|_{L_{2} (\Omega; \mathbb{C}^{m})} \mid g \in {L}_{2} 
(\Omega;\CC^m) \mbox{ such that \eqref{e:fu} holds }\} .\end{equation*}
 
The Hamiltonian $H = T^{*} T$  of the   triplet can be viewed as an  operator 
associated with the differential sesqui-linear form $a$ defined as 
in \eqref{e:forma}. We recall 
that $a$, on $C_{0}^{\infty} (\Omega)$,  coincides with  the inner product $(\cdot, 
\cdot)_{T}$, and hence $a$ can be extended on $\mathcal{D} (T)$  by 
\begin{equation*}a [u, v] = (u, v)_{T}, \quad u, v \in \mathcal{D} (T).\end{equation*}
On the other hand, due to Theorem \ref{t:model} (iv), 
$H$ admits an extension  to a unitary 
operator $\widetilde{H}$ acting between  $\stackrel\circ{\mathcal{H}}_{a}^{l}  (\Omega)
$ and $\stackrel\circ{\mathcal{H}}_{a}^{-l}  (\Omega)$. Therefore, the form $a$  
extended on $\stackrel\circ{\mathcal{H}}_{a}^{l}  (\Omega)$,   is associated with $
\widetilde{H}$. Consequently, the problem defined by \eqref{e:formadoi} 
is equivalent with the operator  equation
\begin{equation}\label{e:has}
\widetilde{H} v = f, \quad f \in  \stackrel\circ{\mathcal{H}}_{a}^{-l}  (\Omega).
\end{equation}
Thus, a solution of \eqref{e:forma} is treated as a weak 
solution for \eqref{e:has}. It is sufficient to verify \eqref{e:has}  for 
$u \in \ \stackrel\circ{W}_{a}^{l}  (\Omega)$  or  on another dense subspace 
in  $\stackrel\circ{\mathcal{H}}_{a}^{l}  (\Omega)$ as, for instance, 
$C_{0}^{\infty} (\Omega)$.
  
The preceeding considerations can be summarized in the following

\begin{theorem}\label{t:weaksolutions}
For $\Omega$ a domain in $\RR^N$ and $l\in\NN$, let
$a(x) = [a_{\alpha \beta} (x)]=b(x)^*b(x)$, $|\alpha|,  |\beta| = l$,
$x\in\Omega$, satisfy the conditions \emph{(C1)--(C4)}, and consider
the differential sesqui-linear form 
\begin{align*}\label{e:formaprim}
a [u, v] & = \ \int_{\Omega} \langle a (x) \nabla_{l} (x) , \nabla_{l} (x) \rangle 
\de x
 =  \sum_{| \alpha | = | \beta | = l} \ \int_{\Omega} a_{\alpha \beta} (x) 
D^{\beta} u (x) 
\overline{D^{\alpha} v (x)} d x, \quad u, v \in  C_{0}^{\infty} (\Omega),
\nonumber
\end{align*} Then:

\nr{1} The operator $T$ acting from  
$L_{2} (\Omega)$ to $L_{2} (\Omega; \mathbb{C}^{m})$ and defined by
$(T u) (x) = b (x) \nabla_{l} u (x)$ for 
$x \in \Omega$ and $u\in
\dom (T) = \{ u \in  \stackrel \circ{W}_{2}^{l} (\Omega) \mid b 
\nabla_{l} u  \in L_{2} (\Omega; \mathbb{C}^{m}) \}$ is closed, densely defined, 
and injective.

\nr{2} The pre-Hilbert space $\dom(T)$ with norm 
$|u |_{T} =  ( \int_{\Omega} |b (x)  \nabla_{l} u  (x) |^{2} \de x 
)^{\frac{1}{2}}$, has a unique Hilbert space completion, denoted
by  $\mathcal{H}_{a}^{l}  (\Omega)$, that is 
continuously embedded into $\stackrel\circ{L}_{1}^{l} (\Omega)$.

\nr{3} The conjugate space of $\stackrel\circ{\mathcal{H}}_{a}^{l}\!\! (\Omega)$,
denoted by $\stackrel\circ{\mathcal{H}}_{a}^{-l}\!\!(\Omega)$, can be realized 
in such a way that, for any $f\in \stackrel\circ{\mathcal{H}}_{a}^{-l}  (\Omega)$ 
there exist elements $g \in L_{2} (\Omega; \mathbb{C}^{m})$  such 
that \begin{equation}\label{e:futrei}
f (u) = \int_{\Omega} \langle g(x), b (x) \nabla_{l} u (x) \rangle \de x, \quad u 
\in  \stackrel\circ{W}_{2}^{l} (\Omega),
\end{equation}
and
\begin{equation*}\| f \|_{\stackrel\circ{\mathcal{H}}_{a}^{-l}\!\!(\Omega)} = 
\inf\{ \| g \|_{L_{2} (\Omega; \mathbb{C}^{m})} \mid g \in {L}_{2}(\Omega;\CC^m) \mbox{ such that \eqref{e:futrei} holds }\} .\end{equation*}

\nr{4} $(\stackrel\circ{\mathcal{H}}_{a}^{l}\!\!(\Omega); L_2(\Omega);\stackrel
\circ{\mathcal{H}}_{a}^{-l}\!\!(\Omega))$ is a triplet of closely embedded Hilbert
spaces.

\nr{5} For every $f \in \stackrel
\circ{\mathcal{H}}_{a}^{-l}\!\! (\Omega)$ there exists a unique 
$v \in \mathcal{H}_{a}^{l}  (\Omega)$ that solves the Dirichlet problem 
associated to the sesquilinear form $a$, in the sense that
\begin{equation*}
a [u, v] = \langle u, f \rangle \mbox{ for  all } u \in  \mathcal{H}_{a}^{l}  (\Omega).
\end{equation*}
More precisely, $v=\widetilde H^{-1}f$, where $\widetilde H$ is the unitary 
operator acting between  $\stackrel\circ{\mathcal{H}}_{a}^{l}  (\Omega)
$ and $\stackrel\circ{\mathcal{H}}_{a}^{-l}  (\Omega)$ that uniquely extends 
the positive selfadjoint operator $H=T^*T$ in $L_2(\Omega)$.
\end{theorem}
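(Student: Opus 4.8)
The plan is to recognize that this theorem is a consolidation of the lemmas and propositions already established in this section, so the bulk of the work is to cite them in the correct order and to check that the hypotheses of the abstract model Theorem~\ref{t:model} are genuinely met by the operator $T$ at hand. First I would dispose of items (1) and (2) immediately: item (1) is verbatim the content of Lemma~\ref{l:cdi}, which shows that under (C1)--(C4) the operator $T$ is closed, densely defined, and injective; item (2) is Proposition~\ref{p:hazeroreal}, which produces the concrete realization of the completion $\mathcal{D}(T)$ continuously embedded into $\stackrel\circ{L}_{1}^{l}(\Omega)$, the uniqueness of the Hilbert-space completion being the standard abstract fact.

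Next, for items (3) and (4) I would invoke Theorem~\ref{t:model}. By Lemma~\ref{l:cdi} the operator $T$ lies in $\cC(L_{2}(\Omega),\cG)$, is one-to-one, and — after replacing $\cG$ by the closure of $\ran(T)$ in $L_{2}(\Omega;\CC^{m})$, exactly as remarked before the lemma — has dense range; hence the abstract model applies with Hamiltonian $H=T^{*}T$. Item (4) is then immediate from items (i) and (ii) of Theorem~\ref{t:model}, yielding the triplet $(\stackrel\circ{\mathcal{H}}_{a}^{l}(\Omega);L_{2}(\Omega);\stackrel\circ{\mathcal{H}}_{a}^{-l}(\Omega))$ with the right-hand space canonically identified with $\mathcal{R}(T^{*})$ via item (vi). For item (3), the representation \eqref{e:futrei} together with the infimum formula for the norm follows by unwinding the definition of $\mathcal{R}(T^{*})$: every $f\in\mathcal{R}(T^{*})$ is the $\mathcal{R}$-norm limit of elements $T^{*}g$ with $g$ in the closure of $\ran(T)$, so $f(u)=\langle Tu,g\rangle_\cG=\int_{\Omega}\langle g,b\nabla_{l}u\rangle\de x$ on the dense domain, and the norm as an infimum over representing elements $g$ is precisely the operator-range norm furnished by Theorem~\ref{t:range} applied to $T^{*}$.

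The delicate point, and the one I would treat with care, is item (5). Existence and uniqueness of $v$ are automatic once one knows that $\widetilde H\colon\stackrel\circ{\mathcal{H}}_{a}^{l}(\Omega)\ra\stackrel\circ{\mathcal{H}}_{a}^{-l}(\Omega)$ is unitary, hence boundedly invertible — this is item (iv) of Theorem~\ref{t:model} — so one sets $v=\widetilde H^{-1}f$. The substantive step is to verify that this abstract $v$ really solves the weak formulation $a[u,v]=\langle u,f\rangle$ for every $u$ in the left-hand space. For this I would first extend the form $a$ from $C_{0}^{\infty}(\Omega)$ to all of $\mathcal{D}(T)$ by continuity, observing that on $\dom(T)$ it coincides with the inner product $(\cdot,\cdot)_{T}$ by \eqref{e:psmodte}, so that $a[u,v]=(u,v)_{T}$. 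The heart of the matter is then to reconcile this inner product with the duality pairing through the canonical identification $\Theta$ of \eqref{e:theta}: since $\widetilde H=\widetilde A^{-1}$ and $(\Theta\alpha)(x)=(\widetilde A\alpha,x)_{T}$, taking $\alpha=f=\widetilde H v$ gives $\widetilde A f=v$ and hence $(\Theta f)(u)=(v,u)_{T}$, which is the value of $\langle u,f\rangle$.

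The main obstacle is therefore essentially one of bookkeeping: tracking the several unitary identifications $\Theta$, $\widetilde H$, $\widetilde A$ together with the conjugate-linearity conventions attached to the conjugate dual space, so that the two expressions $a[u,v]$ and $\langle u,f\rangle$ are made to coincide exactly rather than merely up to a conjugation. Once they are aligned on the dense subspace $C_{0}^{\infty}(\Omega)$, a density-and-continuity argument propagates the identity to all of $\stackrel\circ{\mathcal{H}}_{a}^{l}(\Omega)$, and it then suffices to test \eqref{e:has} on $\stackrel\circ{W}_{2}^{l}(\Omega)$ or on $C_{0}^{\infty}(\Omega)$, as indicated in the remark preceding the theorem.
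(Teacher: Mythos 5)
Your proposal is correct and follows essentially the same route as the paper: the paper's own ``proof'' is precisely the sequence of preceding results (Lemma~\ref{l:cdi} for item (1), Proposition~\ref{p:hazeroreal} for item (2), and the application of Theorem~\ref{t:model} to $T$ viewed as mapping into the closure of $\ran(T)$ for items (3)--(5), with $v=\widetilde H^{-1}f$ supplied by the unitarity of $\widetilde H$). Your extra care with the conjugate-linearity bookkeeping in item (5) and the unwinding of the $\cR(T^*)$-norm in item (3) only makes explicit what the paper leaves implicit.
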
  
  
  \begin{remark}
Since the Hamiltonian $H$ is associated to a 
differential form, it can be treated as a formal  differential operator
\begin{equation}\label{e:hus}
H u = \sum_{|\alpha| = l}  \sum_{|\beta| = l} D^{\alpha}  (a_{\alpha, \beta}  (x) 
D^{\beta} u).
\end{equation}  
However, it should be emphasized that  $H$ in \eqref{e:hus} should be 
rather considered a symbol that may not be 
a differential  operator at all, due to the fact that 
the coefficients $a_{\alpha, \beta}$ are not assumed 
to be differentiable. If we impose conditions of smoothness on the boundary of 
$\Omega$ and on $a_{\alpha,\beta}$ then $H$ in \eqref{e:hus} may be a 
differential operator.
\end{remark}

\begin{remark}
In case $1/c  \in L_{\infty} (\Omega)$  the following inequality can be proved 
\begin{equation*}\int_{\Omega} | \nabla_{l} u (x) |^{2} d x \leq C \int_{\Omega} 
| b(x) \nabla_{l} u (x) 
|^{2} \de x, \quad u \in C_{0}^{\infty} (\Omega),\end{equation*}
where $C$ is a constant that is independent of $u$. 
In this case, with arguments
similar to those used in the proof Proposition~\ref{p:hazeroreal}, $\stackrel
\circ{\mathcal{H}}_{a}^{l}  (\Omega)$ is a space of functions that admits 
a natural 
continuous embedding into $\stackrel\circ{L}_{2}^{l} (\Omega)$.  For 
a bounded domain $\Omega$, due to the Poincar\'e Inequality 
\begin{equation*}
\|u\|_{L_{2} (\Omega)} \leq  c \|u\|_{\alpha, \beta}, \quad u \in C_{0}^{\infty} 
(\Omega),\end{equation*}
the norm $\| \cdot \|_{2, l}$ is equivalent to the Sobolev norm $\| \cdot \|_{W_{2}^{l} 
(\Omega)}$.  It follows $\stackrel\circ{L}_{2}^{l} (\Omega) = \stackrel\circ{W}_{2}^{l} 
(\Omega)$, the space $\stackrel\circ{\mathcal{H}}_{a}^{l}  (\Omega)$ is realized as a 
subspace of $\stackrel\circ{W}_{2}^{l} (\Omega)$, and, in this case,  $\stackrel
\circ{\mathcal{H}}_{a}^{l}  (\Omega)$ is continuously embedded in 
$L_{2} (\Omega)$.
\end{remark}


\end{document}